\documentclass[a4paper,12pt, twoside, pdftex]{amsart}

\usepackage{fullpage}
\usepackage{amsmath, amsthm, amssymb}
\usepackage{txfonts}
\usepackage{amsfonts}
\usepackage{prettyref}
\usepackage{mathrsfs}
\usepackage[all]{xy}
\usepackage[utf8]{inputenc}

\newtheorem{dummy}{dummy}[section]
\newtheorem{lemma}[dummy]{Lemma}
\newtheorem{theorem}[dummy]{Theorem}

\newtheorem{corollary}[dummy]{Corollary}

\theoremstyle{definition}
\newtheorem{definition}[dummy]{Definition}
\newtheorem*{definition*}{Definition}

\newtheorem{remark}[dummy]{Remark}

\newtheorem*{acknowledgements}{Acknowledgements}

\numberwithin{equation}{section}

\newrefformat{th}{Theorem~\ref{#1}}
\newrefformat{cr}{Corollary~\ref{#1}}
\newrefformat{lm}{Lemma~\ref{#1}}
\newrefformat{dl}{Definition-Lemma~\ref{#1}}
\newrefformat{df}{Definition~\ref{#1}}
\newrefformat{cl}{Claim~\ref{#1}}
\newrefformat{sl}{Sublemma~\ref{#1}}
\newrefformat{pr}{Proposition~\ref{#1}}
\newrefformat{cj}{Conjecture~\ref{#1}}
\newrefformat{st}{Step~\ref{#1}}
\newrefformat{sc}{Section~\ref{#1}}
\newrefformat{df}{Definition~\ref{#1}}
\newrefformat{rm}{Remark~\ref{#1}}
\newrefformat{q}{Question~\ref{#1}}
\newrefformat{pb}{Problem~\ref{#1}}
\newrefformat{cd}{Condition~\ref{#1}}
\newrefformat{eg}{Example~\ref{#1}}
\newrefformat{he}{Heore~\ref{#1}}
\newrefformat{fg}{Figure~\ref{#1}}
\newrefformat{tb}{Table~\ref{#1}}
\newrefformat{as}{Assumption~\ref{#1}}

\usepackage{tikz}
\usetikzlibrary{backgrounds}

\newcommand{\pref}{\prettyref}

\newcommand{\Br}{\operatorname{Br}}
\newcommand{\coh}{\operatorname{coh}}

\newcommand{\colim}{\operatorname{colim}}

\newcommand{\End}{\operatorname{End}}
\newcommand{\Ext}{\operatorname{Ext}}

\newcommand{\Hom}{\operatorname{Hom}}
\newcommand{\id}{\operatorname{id}}
\newcommand{\im}{\operatorname{Im}}
\newcommand{\kker}{\operatorname{ker}}
\newcommand{\Ker}{\operatorname{Ker}}
\newcommand{\Perf}{\operatorname{Perf}}

\newcommand{\pr}{\operatorname{pr}}
\newcommand{\qch}{\operatorname{qch}}

\newcommand{\Spec}{\operatorname{Spec}}
\newcommand{\supp}{\operatorname{supp}}

\newcommand{\cC}{\mathcal{C}}

\newcommand{\cH}{\mathcal{H}}

\newcommand{\bN}{\mathbb{N}}

\newcommand{\bZ}{\mathbb{Z}}

\newcommand{\bfk}{\mathbf{k}}

\newcommand{\bfB}{\mathbf{B}}
\newcommand{\bfD}{\mathbf{D}}

\newcommand{\bfK}{\mathbf{K}}

\newcommand{\scrA}{\mathscr{A}}

\newcommand{\scrE}{\mathscr{E}}
\newcommand{\scrF}{\mathscr{F}}
\newcommand{\scrG}{\mathscr{G}}

\newcommand{\scrK}{\mathscr{K}}

\newcommand{\scrO}{\mathscr{O}}

\newcommand{\frakm}{\mathfrak{m}}

\newcommand{\frakU}{\mathfrak{U}}

\title{Twisted categorical generic fiber}
\author[H.~Morimura]{Hayato Morimura}
\address{
SISSA,
via Bonomea 265,
34136
Trieste,
Italy.}
\email{hayato.morimura@ipmu.jp}
\curraddr{
Kavli Institute for the Physics and Mathematics of the Universe (WPI),
University of Tokyo,
5-1-5 Kashiwanoha,
Kashiwa,
Chiba,
277-8583,
Japan.}

\date{}
\pagestyle{plain}


\begin{document}
\maketitle

\begin{abstract}
After extending Orlov's theorem,
we prove specialization of derived equivalence for flat proper families of Azumaya varieties.
\end{abstract}

\section{Introduction}
When working on a problem in
geometry
and
mathematical physics,
one often enters the realm of noncommutative algebraic geometry.
For instance,
algebraic varieties are tied with in general Azumaya varieties via homological projective duality established by Kuznetsov in
\cite{Kuz07},
which categorifies classical projective duality.
In order to incorporate $B$-fields into homological mirror symmetry,
Kapstin--Orlov suggested to consider the derived category of twisted coherent sheaves
\cite{KP}.

Noncommutativity of an Azumaya variety
$(X, \scrA_X)$
comes from twisting the structure sheaf
$\scrO_X$
by a Brauer class
$[\alpha_X] \in \Br^\prime(X)$,
which corresponds to the equivalence class
$[\scrA_X] \in \Br(X)$
via the canonical inclusion of the Brauer group
$\Br(X)$
to the cohomological Brauer group
$\Br^\prime(X)$.
More explicitly,
there is an equivalence
$\coh(X, \scrA_X) \simeq \coh(X, \alpha_X)$
of categories of
coherent sheaves of right $\scrA_X$-modules
and
coherent $\alpha_X$-twisted sheaves. 
On the other hand,
some $B$-fields define cohomological Brauer classes.

Considering the flexibility of the derived category
which allows us to find relations between nonisomorphic algebraic varieties,
it is natural to ask
whether an Azumaya variety
$(Y, \scrA_Y)$
has its
\emph{geometric realization},
i.e.,
an ordinary algebraic variety
$X$
with
$D^b(X) \simeq D^b(Y, \scrA_Y)$.
Via the derived equivalence
$X$
may be regarded as a resolution of noncommutativity of
$(Y, \scrA_Y)$.
The question makes sense for more general noncommutative spaces.
One of
the earliest
and
the most interesting
examples should be the following.

\begin{theorem}[{\cite[Theorem 5.1]{Cal02}}] \label{thm:5.1}
Let
$f \colon X \to S$
be a generic elliptic Calabi--Yau $3$-fold in the sense of
\cite[Definition 6.1.6]{Cal},
$\varpi \colon J \to S$
its relative Jacobian
and
$\bar{\varpi} \colon \bar{J} \to S$
any analytic small resolution of singularities of
$J$.
For the complement
$U$
of the discriminant locus
$\Delta \subset S$
of
$f$,
let
$\alpha$
be a representative of the element in
$\Br^\prime(J_U)$
corresponding to the base change
$f_U \colon X_U = X \times_S U \to U$
and
$\bar{\alpha}$
its unique extension to
$\bar{J}$.
Then there is an exact $S$-linear equivalence
\begin{align*}
D^b(X) \simeq D^b(\bar{J}, \bar{\alpha}).
\end{align*} 
\end{theorem}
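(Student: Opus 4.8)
The plan is to realise the equivalence as a \emph{relative twisted Fourier--Mukai transform} supported on $X \times_S \bar J$, in two stages: build it over the smooth locus, then extend it across the discriminant.

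Over $U$ the morphism $f_U \colon X_U \to U$ is a smooth genus-one fibration and $\varpi_U \colon J_U \to U$ is its relative Jacobian, a smooth proper abelian $U$-scheme with zero section. Since $f_U$ need not admit a section, the tautological degree-zero sheaf on $X_U \times_U J_U$ exists only as a $\pr_{J_U}^*\alpha$-twisted sheaf $\cP_U$, where $\alpha \in \Br^\prime(J_U)$ is exactly the obstruction class named in the statement. First I would verify that
\[
\Phi_U(-) = \bfR\pr_{J_U *}\bigl(\pr_{X_U}^*(-) \otimes \cP_U\bigr)
\]
is a $U$-linear equivalence $D^b(X_U) \simeq D^b(J_U, \alpha)$. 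By the base-change criterion for relative integral functors (the twisted analogue of Bridgeland--Maciocia) this reduces to a fibrewise check: over each $u \in U$ the kernel restricts to a genuine Poincaré sheaf on the elliptic curve $X_u$ and its Jacobian, and the resulting transform is the classical equivalence $D^b(X_u) \simeq D^b(\mathrm{Jac}(X_u))$.

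Next I would extend both the class and the kernel. The genericity hypothesis forces $\Delta$ to have at worst nodes, the generic singular fibre of $f$ to be of Kodaira type $I_1$, and $J$ to have at worst ordinary double points lying over the nodes of $\Delta$; hence $\bar J$ is a smooth (analytic) threefold, with trivial canonical bundle since the small resolution is crepant, and $\bar J \setminus \bar J_U$ is a divisor. As $\bar J$ is smooth, purity for the Brauer group reduces the existence of an extension $\bar\alpha \in \Br^\prime(\bar J)$ of $\alpha$ to checking that $\alpha$ is unramified at the generic point of that divisor, i.e.\ over the generic point of $\Delta$, where one argues with the explicit local model of a family of nodal cubics; uniqueness of $\bar\alpha$ follows from the injectivity of $\Br^\prime(\bar J) \hookrightarrow \Br^\prime\bigl(k(\bar J)\bigr)$. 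I would then extend $\cP_U$ to a $\pr_{\bar J}^*\bar\alpha$-twisted sheaf $\bar\cP$ on $X \times_S \bar J$, flat over both factors, whose restriction to each $X_s \times \{j\}$ is a stable rank-one torsion-free sheaf of degree zero --- most naturally by identifying (a partial normalisation of) $X \times_S \bar J$ with the carrier of a twisted universal family for the relative moduli problem defining $J$, and pulling back along $\bar J \to J$.

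Finally I would show that the twisted integral functor $\Phi = \bfR\pr_{\bar J *}\bigl(\pr_X^*(-) \otimes \bar\cP\bigr) \colon D^b(X) \to D^b(\bar J, \bar\alpha)$ is an equivalence. For this I would follow Bridgeland's argument: it suffices to prove $\Phi$ is fully faithful, since $X$ and $\bar J$ both have trivial canonical bundle, so $\Phi$ automatically intertwines the Serre functors (both are $[3]$), and $D^b(\bar J, \bar\alpha)$ is indecomposable --- whence fully faithful plus compatibility with Serre functors forces essential surjectivity. Full faithfulness I would test on the spanning class of skyscrapers of points, which by $S$-linearity and base change localises over $S$: over $U$ it is the first stage, over $\Delta$ away from its nodes it is the classical Fourier--Mukai equivalence for a family of nodal cubics, and over the nodes of $\Delta$ it is a local computation with the chosen analytic small resolution. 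This last case is the main obstacle: there $J$ is genuinely singular, $\bar J$ depends on the choice of small resolution, and the fibre of $f$ is a reducible Kodaira fibre, so one must check directly that the twisted Poincaré kernel still produces the required isomorphisms on $\Ext$-groups of skyscrapers --- which forces one to work out the precise local geometry of the generic elliptic Calabi--Yau threefold near such a fibre together with that of the resolved compactified Jacobian.
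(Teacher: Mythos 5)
The paper does not prove this statement itself: it is quoted verbatim from C\u{a}ld\u{a}raru's \cite[Theorem 5.1]{Cal02} and serves only as motivation. Your proposal is, in essence, a reconstruction of C\u{a}ld\u{a}raru's original moduli-theoretic argument: a twisted Poincar\'e kernel over the smooth locus $U$, purity for the Brauer group on the smooth analytic threefold $\bar J$ to extend the class uniquely, extension of the kernel via a twisted (pseudo-)universal sheaf for the relative moduli problem defining $J$, and Bridgeland's criterion (fully faithful, Serre functors both $[3]$, indecomposable target) tested on skyscrapers. This is precisely the route the paper credits to \cite[Section 5]{Cal} in the third remark following the statement.

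What the paper itself contributes is a different and strictly weaker route. From the derived equivalence of generic fibers $D^b(X_\xi) \simeq D^b(J_\xi,\alpha_\xi)$, specialization of twisted Fourier--Mukai functors (Corollary \pref{cor:specialization}) yields a $V$-linear equivalence $D^b(X_V) \simeq D^b(J_V,\alpha_V)$ over some nonempty Zariski-open $V \subset S$, which necessarily lies inside $U$, so $J_V = \bar J_V$. This does not recover the equivalence over all of $S$; the paper is explicit that to get the full statement one must exhibit the twisted pseudo-universal sheaf moduli-theoretically, exactly as you propose. The trade-off is that the specialization machinery is cheaper --- no kernel extension across the discriminant, no local analysis at singular or reducible fibres --- and applies to arbitrary flat proper families of Azumaya varieties with derived-equivalent generic fibers. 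So the two approaches genuinely diverge: yours is the proof of the theorem as stated; the paper's proves a weaker but much more general specialization result of which this is the motivating special case.

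One caveat on your sketch: the final step --- the $\Ext$-computation for skyscrapers at the nodes of $\Delta$, where $J$ is singular, the Kodaira fibre of $f$ is reducible, and $\bar J$ depends on the chosen analytic small resolution --- is flagged as the main obstacle but not carried out. That local analysis (together with constructing $\bar\cP$ as a bona fide twisted sheaf flat over $\bar J$, which requires the precise relation between $X \times_S \bar J$ and the relative compactified Jacobian) is the genuinely hard content of C\u{a}ld\u{a}raru's proof; the remaining steps in your outline are standard.
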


This can be regarded as a special case of our main result,
which gives a sort of sufficient condition for flat proper families of Azumaya varieties to be geometrically realized.

\begin{corollary} \label{cor:GR}
Let
$\tilde{\pi} \colon (X, \scrA_X) \to (S, \scrA_S)$
be a flat proper morphism of Azumaya varieties over
a field
$\bfk$
of characteristic
$0$.
If its generic fiber in the sense of Definition
\pref{dfn:GF}
is geometrically realized by the generic fiber of some flat proper family,
then so is
$\tilde{\pi}$
up to shrinking
$S$.
\end{corollary}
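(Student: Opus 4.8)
The plan is to realize the equivalence over the generic point by a Fourier--Mukai kernel, spread that kernel out, and then shrink $S$ until the spread-out functor is an equivalence. Write $\eta=\Spec\bfk(S)$. By Definition~\pref{dfn:GF} the generic fiber of $\tilde\pi$ is the $\bfk(S)$-linear category $D^b(X_\eta,\scrA_{X_\eta})$, where $X_\eta=X\times_S\eta$ carries the Azumaya algebra induced from $\scrA_X$; by hypothesis it is equivalent to $D^b(Z_\eta)$ for the generic fiber of some flat proper family $\rho\colon Z\to T$. Since $Z_\eta$ is the generic fiber and $\bfk$ has characteristic zero, generic smoothness lets us shrink $T$ so that $\rho$ is smooth and proper; and since $D^b(X_\eta,\scrA_{X_\eta})$ is smooth and proper over $\bfk(S)$, comparing the endomorphism rings of the identity functors on the two sides identifies $\bfk(T)$ with $\bfk(S)$, so after further shrinking we may take $T$ to be an open subscheme of $S$ and the given equivalence to be $\bfk(S)$-linear. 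Applying generic smoothness once more to $\tilde\pi$, we are reduced to the case where, over a common base again called $S$, both $\tilde\pi$ and $\rho\colon Z\to S$ are smooth and proper, $\scrA_X$ is an Azumaya algebra, and we are handed a $\bfk(S)$-linear equivalence $\Phi\colon D^b(X_\eta,\scrA_{X_\eta})\to D^b(Z_\eta)$.

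First I would invoke the extension of Orlov's representability theorem established earlier: $\Phi$ is of Fourier--Mukai type, $\Phi\simeq\Phi_{\cP_\eta}$ for a kernel $\cP_\eta$ on $X_\eta\times_\eta Z_\eta$ with the evident twist, and its quasi-inverse is $\Phi_{\cQ_\eta}$ for a kernel $\cQ_\eta$ on $Z_\eta\times_\eta X_\eta$. Since $X\times_S Z$ is of finite type over $\bfk$, and since the boundedness of a complex together with the $U$-flatness of its cohomology sheaves are properties that propagate from $\eta$ to a nonempty open, there exist a nonempty open $U\subseteq S$ and complexes $\cP$ on $X_U\times_U Z_U$ and $\cQ$ on $Z_U\times_U X_U$, suitably twisted and flat over $U$, restricting over $\eta$ to $\cP_\eta$ and $\cQ_\eta$. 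As $\tilde\pi$ and $\rho$ are smooth and proper, the kernels representing the relative Fourier--Mukai functors $\Phi_\cQ\circ\Phi_\cP$ on $X_U\times_U X_U$ and $\Phi_\cP\circ\Phi_\cQ$ on $Z_U\times_U Z_U$ are perfect complexes whose formation commutes with base change to $\eta$, where they become the structure sheaves of the diagonals $\Delta_{X_\eta}$ and $\Delta_{Z_\eta}$.

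Next I would spread out these identifications. The isomorphism between the kernel of $\Phi_{\cQ_\eta}\circ\Phi_{\cP_\eta}$ and $\scrO_{\Delta_{X_\eta}}$ is a morphism over $\bfk(S)=\colim_{U'\subseteq U}\scrO(U')$, hence it is induced by some morphism $\varphi$ already defined over a nonempty open $U'$; the cone $\Cone(\varphi)$ is a perfect complex on $X_{U'}\times_{U'}X_{U'}$ whose support is closed and disjoint from the fiber over $\eta$, so, $X\times_S X\to S$ being proper, its image in $S$ is a proper closed subset, which we remove. Treating $\Phi_\cP\circ\Phi_\cQ$ in the same way and intersecting, we obtain a nonempty open, once more called $S$, over which $\Phi_\cP$ and $\Phi_\cQ$ are mutually quasi-inverse exact $S$-linear equivalences. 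Hence $D^b(X,\scrA_X)\simeq D^b(Z)$ with $\rho\colon Z\to S$ flat and proper; that is, $\tilde\pi$ is geometrically realized after shrinking $S$.

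The main obstacle is the step just described: one has to know that the degeneracy locus of the spread-out Fourier--Mukai functor --- a priori merely a constructible subset of $S$ --- is contained in a \emph{proper closed} subscheme, so that it can be discarded. This is precisely where properness of $\tilde\pi$ and of the realizing family enters, together with the perfectness of the convolution kernels, which is what makes the preliminary reduction to smooth proper fibers necessary; the Azumaya twist contributes only bookkeeping once the extended Orlov theorem is available. A subsidiary difficulty, already met in the first paragraph, is the base identification: the hypothesis only furnishes a $\bfk$-linear equivalence between categories a priori defined over different fields, so one must first reconstruct $\bfk(S)$ from $D^b(X_\eta,\scrA_{X_\eta})$, spread the resulting isomorphism of bases out, and only then set the relative machinery in motion.
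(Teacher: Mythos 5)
Your argument is correct, but it takes a genuinely different route from the paper. The paper's own proof of Corollary~\ref{cor:GR} is a one-liner: it is declared a direct consequence of Theorem~\ref{thm:main2} (i.e.\ Corollary~\ref{cor:specialization}), applied with the second Azumaya variety taken to be the realizing family equipped with the trivial algebra; the real content lives in the proof of that theorem, which goes through the categorical generic fiber formalism (the Verdier-quotient description of $D^b(X_\xi,\bar\iota^*_{\xi,\circ}\scrA_X)$, the strong generator $E$ and its restriction $E_K$, and the shrinking lemmas about $\Hom$-bijections in Lemmas~\ref{lem:counit2} and~\ref{lem:unit2}, all imported from \cite{Mora}). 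You bypass that machinery entirely and reprove specialization from first principles by the classical spreading-out argument: represent the generic equivalence and its quasi-inverse by Fourier--Mukai kernels via Theorem~\ref{thm:main1}, spread both kernels to an open subset of $S$, observe that the convolution kernels are perfect and commute with the flat base change to $\eta$, and then eliminate the degeneracy locus by the cone-support-plus-properness argument. What each route buys: the paper's approach plugs directly into the Verdier-quotient infrastructure already built in Section~4 and requires no separate spreading-out lemmas, while yours is more self-contained, more elementary, and arguably more transparent about \emph{where} properness is used --- namely, to make the locus where the counit fails to be an isomorphism a closed, hence removable, subset of $S$. You also flag and resolve a point the paper glosses over: the hypothesis only gives a $\bfk$-linear equivalence to the generic fiber of \emph{some} family, so one must first identify its function field with $\bfk(S)$ (you do this via the center / Hochschild $HH^0$ of the smooth proper category) before the $K$-linear kernel machinery of Theorem~\ref{thm:main1} can be invoked; the paper implicitly assumes from the outset that the realizing family lives over $S$, as in the motivating elliptic Calabi--Yau example. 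One small ordering point to tidy: you should apply generic smoothness to $\tilde\pi$ \emph{before} asserting that $D^b(X_\eta,\scrA_{X_\eta})$ is a smooth proper $K$-linear category, since for a merely flat proper $\tilde\pi$ the generic fiber need not be smooth a priori.
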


Indeed,
suppose that
$f \colon X \to S$
is any flat projective morphism from a complex $3$-fold
whose generic fiber
$X_\xi$
satisfies
$D^b(X_\xi) \simeq D^b(J_\xi, \alpha_\xi)$
for the image
$\alpha_\xi$
of
$\alpha$
under the canonical map
$\Br^\prime(J_U) \to \Br^\prime(J_\xi)$.
This is the case for the generic elliptic Calabi--Yau $3$-fold from
\pref{thm:5.1}.
By Corollary
\pref{cor:GR}
there is an open subset
$V \subset S$
and
$V$-linear equivalence
$D^b(X_V) \simeq D^b(J_V, \alpha_V)$.
From the argument in
\cite[Section 4]{Morc}
it follows that
$f_V$
is a generic elliptic $3$-fold with
$\omega_{X_V} \cong \scrO_{X_V}$
whose relative Jacobian is
$\varpi_V$.

\begin{remark}
Via Ogg--Shafarevich theory
\cite[Section 4]{Cal},
the fibration
$f_V$
corresponds to a coprime power
$\alpha^d_V$
to the order of
$\alpha_V$
in
$\Br^\prime(J_V)$ 
\cite[Theorem 1.3]{Morb}.
\end{remark}

\begin{remark}
Since
$\bar{J}$
is analytic,
$\bar{\varpi}$
does not have the naive generic fiber. 
However,
one can take the categorical generic fiber in the sense of Definition
\pref{dfn:CGF}.
Extending
\cite{Mora},
it gives a description of the twisted derived category of the generic fiber as a Verdier quotient.
Here in the setting of
\pref{thm:5.1},
by construction one may pass to the generic fiber of
$\varpi_U$.
\end{remark}

\begin{remark}
When
$f$
is originally a generic elliptic Calabi--Yau $3$-fold,
the moduli theoretic argument in
\cite{Cal02}
allows us to obtain the derived equivalence for the whole families.
In other words,
among the lifts to
$D^b(X \times_S \bar{J}, \pr^*_2 \bar{\alpha})$
along the projection to the Verdier quotient of the Fourier--Mukai kernel associated with
$D^b(X_\xi) \simeq D^b(J_\xi, \alpha_\xi)$,
one can find the twisted pseudo-universal sheaf constructed in
\cite[Section 5]{Cal}.
\end{remark}

Corollary
\pref{cor:GR}
is a direct consequence of the following extension of specialization of derived equivalence established in
\cite{Mora}.

\begin{theorem}[Corollary \pref{cor:specialization}] \label{thm:main2}
Let
$\tilde{\pi} \colon (X, \scrA_X) \to (S, \scrA_S), 
\tilde{\pi}^\prime \colon (Y, \scrA_Y) \to (S, \scrA_S)$
be flat proper morphisms of Azumaya $\bfk$-varieties.
Assume that
their generic fibers are derived-equivalent.
Then up to shrinking
$S$
they are $S$-linear derived-equivalent.
In particular,
their closed fibers are derived-equivalent.
\end{theorem}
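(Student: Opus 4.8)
The plan is to reduce the twisted statement to the untwisted specialization theorem of \cite{Mora} by passing through Azumaya algebras to twisted sheaves and then leveraging an extension of Orlov's representability theorem to the twisted/relative setting. First I would set up the categorical generic fiber: by Definition~\pref{dfn:CGF} one realizes $D^b(X_\xi,\scrA_{X,\xi})$ as a Verdier quotient of $D^b(X,\scrA_X)$ by the subcategory of objects supported over a proper closed subset of $S$, and similarly for $(Y,\scrA_Y)$. The hypothesis gives an exact equivalence $\Phi_\xi \colon D^b(X_\xi,\scrA_{X,\xi}) \xrightarrow{\sim} D^b(Y_\xi,\scrA_{Y,\xi})$ over the function field of $S$. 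The first real step is to promote $\Phi_\xi$ to a Fourier--Mukai functor with a twisted kernel on $X_\xi \times_\xi Y_\xi$; this is where the extension of Orlov's theorem advertised in the abstract enters, and one must check that the usual hypotheses (the source being $D^b$ of a smooth projective variety, or the relevant twisted analogue) are met after possibly shrinking $S$ so that generic smoothness applies to the fibers.

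Next I would spread the kernel out. Having a twisted complex $\scrE_\xi$ on $X_\xi \times_\xi Y_\xi$, standard limit/noetherian approximation arguments let one choose an open $V \subseteq S$ and a $V$-flat twisted complex $\scrE_V$ on $X_V \times_V Y_V$ (with respect to $\pr_1^*\alpha_{X_V}^{-1} \boxtimes \pr_2^*\alpha_{Y_V}$, translated into right $\scrA$-module language) restricting to $\scrE_\xi$. The associated relative Fourier--Mukai functor $\Phi_{\scrE_V} \colon D^b(X_V,\scrA_{X_V}) \to D^b(Y_V,\scrA_{Y_V})$ is automatically $V$-linear. To see it is an equivalence after further shrinking, I would form the candidate quasi-inverse from the adjoint kernel and check that the two natural transformations $\Phi \circ \Psi \Rightarrow \id$ and $\Psi \circ \Phi \Rightarrow \id$ are isomorphisms; by construction they are isomorphisms after restriction to the generic fiber, and their cones are coherent twisted complexes on $X_V \times_V X_V$ (resp.\ $Y_V \times_V Y_V$) whose support maps to a proper closed subset of $V$, so removing that subset kills them. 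This is the step I expect to be the main obstacle: controlling base change for twisted sheaves (flatness, cohomology and base change in the presence of the Azumaya algebra, compatibility of the relative and fiberwise kernels) requires care, essentially re-running the untwisted arguments of \cite{Mora} while tracking the Brauer twist, and one must ensure the shrinking needed at each stage is compatible.

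Finally, once $V$-linearity of the equivalence is established, the statement about closed fibers is immediate: for any closed point $s \in V$, base change of the relative kernel along $\Spec\kappa(s) \to V$ yields a twisted Fourier--Mukai kernel inducing $D^b(X_s,\scrA_{X_s}) \simeq D^b(Y_s,\scrA_{Y_s})$, using that $\Phi_{\scrE_V}$ and its quasi-inverse are compatible with derived base change because $\scrE_V$ was chosen $V$-flat. Thus the overall architecture mirrors \cite{Mora}: (i) generic FM kernel via the extended Orlov theorem, (ii) spreading out to a relative kernel, (iii) shrinking to make unit/counit isomorphisms, (iv) specialization by base change; the only genuinely new ingredient is systematically carrying the twist $\alpha$ (equivalently the Azumaya algebra $\scrA$) through each of these steps, which is exactly what Theorem~\ref{thm:main2} packages.
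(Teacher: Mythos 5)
Your proposal is correct and follows the same four-step architecture as the paper: (i) produce a twisted Fourier--Mukai kernel $P_K$ on the generic fiber via Theorem \ref{thm:main1}; (ii) lift $P_K$ to a relative kernel on $X\times_S Y$; (iii) shrink $S$ so that the unit and counit of the adjunction for $\Phi_P$ become isomorphisms; (iv) specialize to closed fibers by derived base change. Two routing choices differ from the paper's. In step (ii) you invoke noetherian approximation/spreading out, while the paper lifts $P_K$ simply by choosing a preimage under the Verdier quotient projection, justified by Corollary \ref{cor:compatibility}, which identifies $D^b(X_\xi,\bar{\iota}^*_{\xi,\circ}\alpha_X)$ with $D^b(X,\alpha_X)/\Ker(\bar{\iota}^*_{\xi,\circ})$; this is the payoff of having set up the categorical generic fiber, and it avoids having to carry $V$-flatness and twist-compatibility through an explicit limit argument. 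In step (iii) you argue via the cones of the morphisms of kernels encoding the unit/counit, noting that their supports miss the generic fiber and hence project into a proper closed subset of $V$; the paper instead follows \cite{Mora} and tests the unit/counit against a fixed strong generator $E$ (Lemma \ref{lem:3.4.1}, Lemmas \ref{lem:counit2}--\ref{lem:unit2}), turning the question into one about finitely generated $R$-modules that become isomorphisms after localizing at the generic point. Both mechanisms rest on the same principle (a coherent object over $R$ vanishing at the generic point dies after shrinking $\Spec R$), but the generator-based bookkeeping is what the paper uses to make the shrinking explicit and to keep base-change compatibility of the adjoint kernels in check; you correctly flag this compatibility as the main point requiring care, and indeed the paper outsources it to \cite{Cal} and \cite{HLS}.
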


Our proof is a simple adaptation of the argument in
\cite{Mora}
to twisted case,
for which we need to know
whether any derived equivalence of the generic fibers is of Fourier--Mukai type. 
Extending
\cite{CS07}
to proper case,
we obtain the following stronger result
which should be of independent interest.

\begin{theorem}[cf. {\cite[Theorem 1.1]{CS07}}] \label{thm:main1}
Let
$(X, \scrA_X), (Y, \scrA_Y)$
be smooth proper Azumaya $\bfk$-varieties with
$\scrA_X
\cong
\scrE_X \otimes_{\scrO_X} \scrE^\vee_X,
\scrA_Y
\cong
\scrE_Y \otimes_{\scrO_Y} \scrE^\vee_Y$
for locally free sheaves
$\scrE_X, \scrE_Y$
of finite ranks twisted by
$\alpha_X, \alpha_Y$.
If
$\Phi \colon D^b(X, \alpha_X) \to D^b(Y, \alpha_Y)$
is an exact $\bfk$-linear functor satisfying
\begin{align*}
\Hom_{D^b(Y, \alpha_Y)}(\Phi(\scrF), \Phi(\scrG)[l])
=
0, \
l < 0 
\end{align*} 
for any
$\scrF, \scrG \in \coh(X, \alpha_X)$,
then there exist
$P \in D^b(X \times Y, \alpha^{-1}_X \boxtimes \alpha_Y)$
and
an isomorphism
$\Phi \cong \Phi^{X \to Y}_P$
to the twisted Fourier--Mukai transform with kernel
$P$.
Moreover,
$P$
is uniquely determined up to isomorphism. 
\end{theorem}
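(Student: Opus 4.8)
The plan is to run the proof of \cite[Theorem~1.1]{CS07} unchanged at every point where projectivity plays no role, and to replace the one step where \cite{CS07} uses a twisted ample sequence — the reconstruction of the kernel from the values of $\Phi$ on an ample family — by the dg-categorical argument already employed in the untwisted proper case in \cite{Mora}. First I record the bookkeeping of Brauer twists. Write $\scrA_X = \mathcal{E}nd(\scrE_X) = \scrE_X \otimes_{\scrO_X} \scrE_X^\vee$ and $\scrA_Y = \mathcal{E}nd(\scrE_Y)$; the twisted locally free sheaves $\scrE_X$, $\scrE_Y$ and $\scrE_X^\vee \boxtimes \scrE_Y$ induce Morita equivalences $\coh(X, \alpha_X) \simeq \coh(X, \scrA_X)$, $\coh(Y, \alpha_Y) \simeq \coh(Y, \scrA_Y)$ and $\coh(X \times Y, \alpha_X^{-1} \boxtimes \alpha_Y) \simeq \coh(X \times Y, \scrA_X^{\mathrm{op}} \boxtimes \scrA_Y)$, and these are compatible with the formation of Fourier--Mukai kernels; so it is enough to produce $P$ on the side of $\scrA_X^{\mathrm{op}} \boxtimes \scrA_Y$-modules. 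Since $X$ and $Y$ are smooth and proper and $\scrA_X, \scrA_Y$ are Azumaya, the categories $D^b(X, \alpha_X)$ and $D^b(Y, \alpha_Y)$ are equivalent to the categories of perfect complexes of $\scrA_X$- resp. $\scrA_Y$-modules and carry canonical dg-enhancements $\cA_X$, $\cA_Y$, which are smooth and proper over $\bfk$; the analogous enhancement of $D^b(X \times Y, \alpha_X^{-1} \boxtimes \alpha_Y)$ is $\cA_X^{\mathrm{op}} \otimes_\bfk \cA_Y$, by a K\"unneth argument for generators.

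The next step is to promote $\Phi$ to the unbounded level. Using the hypothesis that $\Hom_{D^b(Y, \alpha_Y)}(\Phi(\scrF), \Phi(\scrG)[l]) = 0$ for $l < 0$ and all $\scrF, \scrG \in \coh(X, \alpha_X)$, I will extend $\Phi$ to an exact, coproduct-preserving functor $\bar\Phi \colon D(\mathrm{Qcoh}(X, \alpha_X)) \to D(\mathrm{Qcoh}(Y, \alpha_Y))$. This is the formal heart of the argument. Because $\coh(X, \alpha_X)$ generates $\mathrm{Qcoh}(X, \alpha_X)$ under filtered colimits, such an extension is unique if it exists; and, exactly as in the untwisted treatment of \cite{Mora}, the Hom-vanishing applied with all shifts is precisely what makes $\bar\Phi$ well defined on morphisms and exact, and ensures that it restricts back to $\Phi$ on the bounded subcategory. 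This step is what replaces, in the proper case, the use of a twisted ample sequence in \cite{CS07}.

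Now I invoke the dg-machinery. By the uniqueness of dg-enhancements for the derived category of quasi-coherent modules over an Azumaya algebra on a smooth proper variety — obtained by running the Lunts--Orlov argument for $\scrA_X$-modules, where the standing hypothesis $\scrA_X \cong \scrE_X \otimes_{\scrO_X} \scrE_X^\vee$ is exactly what makes $\scrA_X$ twisted-Morita-equivalent to $\scrO_X$ — the continuous functor $\bar\Phi$ lifts to a dg-functor between the unbounded enhancements of $\cA_X$ and $\cA_Y$. Since $\cA_X$ is smooth and proper, To\"en's derived Morita theorem identifies continuous dg-functors out of its module category with $(\cA_X^{\mathrm{op}} \otimes_\bfk \cA_Y)$-modules; transporting the lift through this identification and back along the Morita dictionary above yields $P \in D(\mathrm{Qcoh}(X \times Y, \alpha_X^{-1} \boxtimes \alpha_Y))$ with $\bar\Phi \cong \Phi^{X \to Y}_P$, hence $\Phi \cong \Phi^{X \to Y}_P$ on $D^b$. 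That $P$ is bounded and coherent, i.e. lies in $D^b(X \times Y, \alpha_X^{-1} \boxtimes \alpha_Y)$, follows by evaluating $\Phi^{X \to Y}_P$ on the $\alpha_X$-twisted skyscrapers: these go to the fibers of $P$ over the closed points of $X$, which lie in $D^b(Y, \alpha_Y)$, and boundedness and coherence of $P$ follow from this fiberwise control just as in the untwisted case. Uniqueness of $P$ up to isomorphism is then automatic, since $\bar\Phi$ is determined by $\Phi$ by the universal property used to build it and $\bar\Phi$ determines its kernel under To\"en's equivalence; alternatively one compares two candidate kernels by evaluating the transforms on twisted skyscrapers as in \cite{Mora}.

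I expect the only genuinely non-formal point to be this appeal to the Lunts--Orlov uniqueness of enhancement and to To\"en's representability for module categories over the Azumaya algebras $\scrA_X$, $\scrA_Y$, $\scrA_X^{\mathrm{op}} \boxtimes \scrA_Y$ rather than over structure sheaves, together with the verification that the Morita dictionary between twisted sheaves and Azumaya modules is functorial and matches Fourier--Mukai kernels with Fourier--Mukai kernels throughout. The twist bookkeeping itself is routine; what allows the smooth-proper argument of \cite{Mora} to be imported essentially unchanged is precisely the hypothesis $\scrA_X \cong \scrE_X \otimes_{\scrO_X} \scrE_X^\vee$, $\scrA_Y \cong \scrE_Y \otimes_{\scrO_Y} \scrE_Y^\vee$, which trivializes the Brauer classes Morita-theoretically and reduces every homological-algebra input to its known untwisted form.
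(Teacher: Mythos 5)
Your approach is fundamentally different from the paper's, which adapts the convolution machinery of \cite[Theorem 1.1]{CS07}: the paper resolves $\scrO_\Delta$ by box products of twisted locally free sheaves built from the pseudo-ample systems $\scrO_X(-n_i D_i) \otimes \scrE_X$ (Lemmas \ref{lem:A}--\ref{lem:B}), applies $1 \otimes \Phi$ to this resolution, truncates to get a complex $\tilde{C}_m$ admitting a unique right convolution $\scrG_m$, splits $\scrG_m$ by degree into $P_m \oplus P_m'$ using the boundedness estimate from Lemma \ref{lem:bdd}, shows $\Phi \cong \Phi^{X\to Y}_{P_m}$ on the class $\bfK_m$, extends the isomorphism by descending induction to the subcategory $\bfB$ of pseudo-ample generators, and finally invokes Lemma \ref{lem:3.7} to propagate it to all of $D^b(X, \alpha_X)$. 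None of this appears in your proposal.

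More importantly, there is a genuine gap that makes your route fail. You announce as ``the formal heart'' the extension of $\Phi$ to an exact, \emph{coproduct-preserving} functor $\bar\Phi \colon D(\operatorname{Qcoh}(X,\alpha_X)) \to D(\operatorname{Qcoh}(Y,\alpha_Y))$, claiming the hypothesis $\Hom(\Phi(\scrF), \Phi(\scrG)[l]) = 0$ for $l<0$ ``is precisely what makes $\bar\Phi$ well defined.'' It is not. That hypothesis controls negative Exts between images of coherent sheaves; it says nothing about commutation with filtered colimits or arbitrary coproducts, and the existence of such a continuous extension is essentially equivalent to $\Phi$ being of Fourier--Mukai type --- which is the conclusion, so the argument is circular at exactly the point where real work is needed. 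The subsequent invocation of Lunts--Orlov uniqueness of enhancements plus To\"en representability also overreaches: uniqueness of enhancements does not grant every exact triangulated functor a dg-lift, and the literature (Canonaco--Stellari, Rizzardo--Van den Bergh) makes clear that exact functors between derived categories of smooth proper varieties need not be Fourier--Mukai in the absence of additional hypotheses. The paper in fact explicitly notes that adapting the dg/point-like-object route (Olander's) would require \emph{full faithfulness} of $\Phi$, a stronger hypothesis than the one given; your plan, which keeps only the negative-Ext vanishing while attempting a continuous lift, is strictly more ambitious and does not go through. The Morita bookkeeping between $\coh(X,\alpha_X)$ and $\coh(X,\scrA_X)$ that you set up is fine, but it is peripheral; the substantive construction of the kernel is missing.
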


This also extends to twisted case Orlov's theorem
\cite{Orl}
for smooth proper varieties proved by
Olander in
\cite{Ola}.
We could have adapted his argument using point-like objects
whose result would require fully faithfullness of
$\Phi$,
which is stronger than the above hypothesis.
Moreover,
we would need a refined version of Gabriel's theorem
\cite[Theorem 3.5]{CG},
instead of
\cite[Theorem 1.1]{Per08},
in adapting the final step of the proof of
\cite[Proposition 2]{Ola}.
However,
the
\emph{pseudo-ample sequence}
tacitly introduced in
\cite{Ola}
is still useful for our purpose.

The significance of the categorical generic fiber might be clearer
when working with
\emph{noncommutative schemes}
over
$S$,
i.e.,
idempotent-complete stable $\infty$-categories equipped with
$\Perf(S)$-actions. 
Over characteristic
$0$,
we may pass to the corresponding pretriangulated dg categories via
\cite[Corollary 5.7]{Coh}.
Also the dg categorical generic fiber
\cite[Definition 1.5]{Mord}
extends to twisted case,
which we expect to be defined for more general noncommutative schemes.
It would be an interesting problem to extend
Corollary
\pref{cor:GR}
to this setting.

\begin{acknowledgements}
The author was supported by SISSA PhD scholarships in Mathematics.
This work was partially supported by
JSPS KAKENHI Grant Number
JP23KJ0341.  
\end{acknowledgements}

\section{Derived categories of smooth proper Azumaya varieties}
In this section,
we review
Azumaya varieties,
twisted sheaves
and
their categorical relation
following
\cite{Kuz06}
and
\cite{Cal}.
Then
we adapt to our setting
pseudo-ample sequences tacitly introduced in
\cite{Ola}
and
some preparatory results from
\cite{CS07}.
Here
and
in the next section,
one may drop the assumption on
$\bfk$
to be of characteristic
$0$.

\subsection{Categories of coherent sheaves on Azumaya varieties}
\begin{definition}[{\cite[Definition D.1]{Kuz06}}]
An
\emph{Azumaya $\bfk$-variety}
is a pair
$(X, \scrA_X)$
of
an algebraic $\bfk$-variety
and
a sheaf
$\scrA_X$
of Azumaya algebras over
$X$.
A
\emph{morphism}
of Azumaya $\bfk$-varieties
$f \colon (X, \scrA_X) \to (Y, \scrA_Y)$
is a pair
$(f_\circ, f_\scrA)$
of
a morphism
$f_\circ \colon X \to Y$
of algebraic $\bfk$-varieties
and
a homomorphism
$f_\scrA \colon f^*_\circ \scrA_Y \to \scrA_X$
of $\scrO_X$-algebras.
\end{definition}

\begin{definition}[{\cite[Definition D.1]{Kuz06}}]
A morphism of Azumaya $\bfk$-varieties
$f \colon (X, \scrA_X) \to (Y, \scrA_Y)$
is
\emph{strict}
if
$\scrA_X \cong f^*_\circ \scrA_Y$
and
$f_\scrA$
is the identity.
A morphism of Azumaya $\bfk$-varieties
$f \colon (X, \scrA_X) \to (Y, \scrA_Y)$
is an
\emph{extension}
if
$X \cong Y$
and
$f_\circ$
is the identity.
\end{definition}

\begin{remark}
Any morphism
$f \colon (X, \scrA_X) \to (Y, \scrA_Y)$
of Azumaya $\bfk$-varieties is canonically decomposed by
an extension
$f^e$
and
a strict morphism
$f^s$
as
\begin{align*}
(X, \scrA_X)
\xrightarrow{f^e}
(X, f^*_\circ \scrA_Y)
\xrightarrow{f^s}
(Y, \scrA_Y).
\end{align*}
\end{remark}

We denote by
$\qch(X, \scrA_X)$
and
$\coh(X, \scrA_X)$
the category of quasicoherent sheaves of right $\scrA_X$-modules
and
its full subcategory of coherent modules.
Up to equivalence,
these are independent of the choice of representatives of the Brauer class
$[\scrA_X] \in \Br(X)$.

\begin{definition}[{\cite[Definition 1.2.1]{Cal}}]
For a \v{C}ech $2$-cocycle
$\alpha_X
=
\{ \alpha_{ijk} \}_{i, j, k \in I}
\in
\check{C}^2_{\text{\'et}}(X, \scrO^*_X)$
with respect to an \'etale cover
$\frakU = \{ U_i \}_{i \in I}$
of an algebraic $\bfk$-variety
$X$,
an
\emph{$\alpha_X$-twisted sheaf}
is a pair
$(\{ \scrF_i \}_{i \in I}, \{ \varphi_{ij} \}_{i, j \in I})$
of collections of
$\scrO_{U_i}$-modules
and
isomorphisms
$\varphi_{ij}
\colon
\scrF_j |_{U_{ij}}
\to
\scrF_i |_{U_{ij}}$
on 
$U_{ij} = U_i \times_X U_j$
satisfying
\begin{align*}
\varphi_{ii}
=
\id, \
\varphi_{ji}
=
\varphi^{-1}_{ij}, \
\varphi_{ij} \circ \varphi_{jk} \circ \varphi_{ki}
=
\alpha_{ijk} \cdot \id 
\end{align*}
for all
$i, j, k \in I$.
It is
\emph{quasicoherent}
if each
$\scrF_i$
is quasicoherent as an $\scrO_{U_i}$-module.
A
\emph{morphism}
of $\alpha_X$-twisted sheaves
$(\{ \scrF_i \}_{i \in I}, \{ \varphi_{ij} \}_{i, j \in I}),
(\{ \scrG_i \}_{i \in I}, \{ \psi_{ij} \}_{i, j \in I})$
is a collection of compatible morphisms
$f_i \colon \scrF_i \to \scrG_i$
with
$\varphi_{ij}, \psi_{ij}$.
\end{definition}

\begin{remark}
In the above definition,
$\scrF_i |_{U_{ij}}$
denote the pullbacks instead of restrictions.
\end{remark}

We denote by
$\qch(X, \alpha_X)$
and
$\coh(X, \alpha_X)$
the category of quasicoherent $\alpha_X$-twisted sheaves on
$X$
and
its full subcategory of coherent sheaves.
Up to equivalence,
these are independent of the choices of
\'etale covers
$\frakU = \{ U_i \}_{i \in I}$
and
representatives of the cohomological Brauer class
$[\alpha_X]
\in
\Br^\prime(X)
=
H^2_{\text{\'et}}(X, \scrO^*_X)$
\cite[Lemma 1.2.3, 1.2.8]{Cal}.

Given an Azumaya $\bfk$-variety
$(X, \scrA_X)$,
the Brauer class
$[\scrA_X]$
is represented by some \v{C}ech $2$-cocycle
$\alpha_X$
via the canonical inclusion
$\Br(X) \subset \Br^\prime(X)$
from
\cite[Theorem 1.1.8]{Cal}.
Then by
\cite[Theorem 1.3.5]{Cal}
there exists a locally free $\alpha_X$-twisted sheaf
$\scrE_X$
of finite rank with
$\scrA_X
\cong
\underline{\End}(\scrE_X)
\cong
\scrE_X \otimes_{\scrO_X} \scrE^\vee_X$. 
For the rest of the paper,
we fix such
$\alpha_X$
and
$\scrE_X$.
The following result allows us to categorically pass from Azumaya varieties to twisted sheaves.

\begin{theorem}[{\cite[Theorem 1.3.7]{Cal}}] \label{thm:1.3.7}
The functors
\begin{align*}
\begin{gathered}
(-) \otimes_{\scrA_X} \scrE_X
\colon
\coh(X, \scrA_X)
\to
\coh(X, \alpha_X), \
(-) \otimes_{\scrO_X} \scrE^\vee_X
\colon
\coh(X, \alpha_X)
\to
\coh(X, \scrA_X)
\end{gathered}
\end{align*}
give mutually inverse $\bfk$-linear equivalences.
\end{theorem}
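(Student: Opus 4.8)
\emph{Proof proposal.} The plan is to recognize this as the Morita equivalence attached to the locally free twisted sheaf $\scrE_X$, and to verify the quasi-inverse relations by reducing to the classical matrix-algebra case on an \'etale cover. First I would check that both functors are well defined, $\bfk$-linear and exact. Since $\scrA_X \cong \underline{\End}(\scrE_X) \cong \scrE_X \otimes_{\scrO_X} \scrE^\vee_X$, the $\alpha_X$-twisted sheaf $\scrE_X$ is naturally a left $\scrA_X$-module, so for $\scrM \in \coh(X, \scrA_X)$ the sheaf $\scrM \otimes_{\scrA_X} \scrE_X$ is defined and is $\alpha_X$-twisted; it is coherent because \'etale-locally $\scrE_X$ is free of finite rank, so that $\scrM \otimes_{\scrA_X} \scrE_X$ is locally a coherent $\scrO_X$-module. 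Dually, $\scrE^\vee_X$ is an $\alpha^{-1}_X$-twisted locally free sheaf, hence for $\scrF \in \coh(X, \alpha_X)$ the sheaf $\scrF \otimes_{\scrO_X} \scrE^\vee_X$ is untwisted and coherent, and the evaluation pairing $\scrE_X \otimes_{\scrO_X} \scrE^\vee_X \cong \scrA_X$ endows it with a right $\scrA_X$-module structure, making it an object of $\coh(X, \scrA_X)$. Exactness is clear since $\scrE_X$ and $\scrE^\vee_X$ are locally free, and $\bfk$-linearity is built in.

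Next I would produce the two natural isomorphisms witnessing that the functors are mutually inverse. On one side, associativity of the tensor product together with $\scrE_X \otimes_{\scrO_X} \scrE^\vee_X \cong \scrA_X$ gives
\begin{align*}
(\scrM \otimes_{\scrA_X} \scrE_X) \otimes_{\scrO_X} \scrE^\vee_X
\cong
\scrM \otimes_{\scrA_X} (\scrE_X \otimes_{\scrO_X} \scrE^\vee_X)
\cong
\scrM \otimes_{\scrA_X} \scrA_X
\cong
\scrM,
\end{align*}
naturally in $\scrM$. On the other side,
\begin{align*}
(\scrF \otimes_{\scrO_X} \scrE^\vee_X) \otimes_{\scrA_X} \scrE_X
\cong
\scrF \otimes_{\scrO_X} (\scrE^\vee_X \otimes_{\scrA_X} \scrE_X),
\end{align*}
so it remains to identify $\scrE^\vee_X \otimes_{\scrA_X} \scrE_X$ with $\scrO_X$ as an untwisted sheaf. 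This is the heart of the matter: on an \'etale cover trivializing both $\alpha_X$ and $\scrE_X$ one has $\scrE_X \cong \scrO^{\oplus r}$ (column vectors), $\scrE^\vee_X \cong \scrO^{\oplus r}$ (row vectors) and $\scrA_X \cong \underline{\End}(\scrO^{\oplus r})$, and matrix multiplication provides the isomorphism $\scrO^{\oplus r} \otimes_{\underline{\End}(\scrO^{\oplus r})} \scrO^{\oplus r} \cong \scrO$, which is precisely the classical Morita equivalence between $\scrO_X$ and $\underline{\End}(\scrE_X)$.

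I expect the main obstacle to be the cocycle bookkeeping needed to globalize these local identifications. One must check that the local isomorphisms are compatible with the gluing data $\{\varphi_{ij}\}$ of $\scrE_X$ (and the induced data on $\scrE^\vee_X$) and with the \v{C}ech $2$-cocycle $\alpha_X$, so that they descend; in particular one must see that the twist $\alpha_X$ carried by $\scrE_X$ and the twist $\alpha^{-1}_X$ carried by $\scrE^\vee_X$ cancel, leaving a genuine $\scrO_X$-module structure on $\scrE^\vee_X \otimes_{\scrA_X} \scrE_X$. Once this descent is established, naturality is automatic, the same computation shows the two composites are the identity functors up to the isomorphisms just constructed, and these satisfy the triangle identities; hence $(-) \otimes_{\scrA_X} \scrE_X$ and $(-) \otimes_{\scrO_X} \scrE^\vee_X$ are mutually inverse $\bfk$-linear exact equivalences of abelian categories.
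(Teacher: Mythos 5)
The paper does not prove this theorem; it is stated as a direct citation of \cite[Theorem~1.3.7]{Cal}, so there is no in-paper argument to compare against. Your blind proof is the standard Morita-theoretic argument, which is essentially what C\u{a}ld\u{a}raru does in the thesis, and it is correct. You identify $\scrE_X$ as an $(\scrA_X,\scrO_X)$-bimodule twisted by $\alpha_X$ and $\scrE^\vee_X$ as an $(\scrO_X,\scrA_X)$-bimodule twisted by $\alpha^{-1}_X$, reduce one composite to $\scrM \otimes_{\scrA_X}\scrA_X\cong\scrM$ via the bimodule isomorphism $\scrE_X\otimes_{\scrO_X}\scrE^\vee_X\cong\scrA_X$, and the other to $\scrE^\vee_X\otimes_{\scrA_X}\scrE_X\cong\scrO_X$, verified \'etale-locally by the $M_r$ case and then glued. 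You correctly flag the cocycle bookkeeping as the only nontrivial point; the explicit check is that if $\{\varphi_{ij}\}$ are the transition data of $\scrE_X$, then the induced transitions on $\scrE^\vee_X\otimes_{\scrA_X}\scrE_X$ are $(\varphi^{-1}_{ij})^\vee\otimes\varphi_{ij}$, which act trivially under evaluation, so the twists cancel and descent is immediate, as you anticipate. One terminological slip: $\scrE_X\otimes_{\scrO_X}\scrE^\vee_X\cong\underline{\End}(\scrE_X)$ is the coevaluation (trace-class) identification, not the evaluation pairing $\scrE^\vee_X\otimes_{\scrO_X}\scrE_X\to\scrO_X$; you in fact use both correctly, so this does not affect the argument.
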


\begin{remark}
The above result holds without the coherent assumption.
Unless
$X$
is smooth quasiprojective over
$\bfk$,
in general
$\Br(X)$
is strictly smaller than
$\Br^\prime(X)$.
For any Brauer class in the complement
$\Br^\prime(X) \setminus \Br(X)$
there is no locally free twisted sheaf of finite rank representing the class.
Hence categories of coherent twisted sheaves
$\coh(X, \alpha_X)$
cover more variants of
$\coh(X)$
than that of coherent right modules
$\coh(X, \scrA_X)$
over sheaves of Azumaya algebras.
\end{remark}

\subsection{Pseudo-ample sequences}
We denote by
$D^b(X, \scrA_X), D^b(X, \alpha_X)$
the derived categories of
$\coh(X, \scrA_X), \coh(X, \alpha_X)$.
Under the assumptions imposed later,
all usual
derived functors 
and
their relations
will have their twisted counterparts
\cite[Chapter 2]{Cal}.
We use the same symbols to denote derived functors
unless otherwise specified.

In the following three lemmas,
we assume
$\dim X \geq 1$.
Take an affine open cover
$X = \bigcup^M_{i = 1} U_i$
such that
the complements
$D_i = X \setminus U_i$
are proper nontrivial closed subscheme with reduced induced scheme structures for all
$i$.
Consider the system of sheaves
\begin{align} \label{eq:system}
\cdots
\subset
\scrO_X(-n_i D_i)
\subset
\scrO_X(-(n_i - 1) D_i)
\subset
\cdots
\subset
\scrO_X(- D_i).
\end{align}

\begin{lemma}[cf. {\cite[Lemma A]{Ola}}] \label{lem:A}
For any
$\scrF \in \coh(X, \alpha_X)$
there are surjections
\begin{align*}
\bigoplus_i \scrO_X(-n_i D_i)^{\oplus r_i} \otimes_{\scrO_X} \scrE_X
\to
\scrF
\end{align*}
when
$n_i$
are sufficiently large.
Moreover,
these are compatible with the system
\pref{eq:system}.
Namely,
for any
$m_i \geq n_i$
the induced morphism
$\bigoplus_i \scrO_X(-m_i D_i)^{\oplus r_i} \otimes_{\scrO_X} \scrE_X
\to
\scrF$
remains surjective. 
\end{lemma}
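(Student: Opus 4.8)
The plan is to reduce the twisted statement to the classical surjectivity result \cite[Lemma A]{Ola} by tensoring with $\scrE^\vee_X$ and exploiting the equivalence of Theorem \pref{thm:1.3.7}. First I would observe that $\scrF \otimes_{\scrO_X} \scrE^\vee_X$ is an honest coherent sheaf of right $\scrA_X$-modules, hence in particular a coherent $\scrO_X$-module. Applying the classical result (Serre-type vanishing/generation along the divisors $D_i$, as in \cite[Lemma A]{Ola}) to this $\scrO_X$-module, we obtain, for $n_i$ sufficiently large, a surjection
\begin{align*}
\bigoplus_i \scrO_X(-n_i D_i)^{\oplus r_i}
\twoheadrightarrow
\scrF \otimes_{\scrO_X} \scrE^\vee_X
\end{align*}
of coherent $\scrO_X$-modules. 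The key point is that $\scrE_X$ is locally free of finite rank, so $(-) \otimes_{\scrO_X} \scrE_X$ is exact and preserves surjections; tensoring the displayed surjection by $\scrE_X$ and using $\scrE^\vee_X \otimes_{\scrO_X} \scrE_X \cong \scrA_X$ together with the counit $(-) \otimes_{\scrA_X} \scrE_X \circ (-) \otimes_{\scrO_X} \scrE^\vee_X \cong \id$ from Theorem \pref{thm:1.3.7} recovers a surjection onto $\scrF$ of the required shape.

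Next I would address the compatibility with the system \pref{eq:system}. For this I would invoke the monotonicity already built into \cite[Lemma A]{Ola}: once $\bigoplus_i \scrO_X(-n_i D_i)^{\oplus r_i} \to \scrG$ is surjective for a coherent $\scrO_X$-module $\scrG$, the inclusions $\scrO_X(-m_i D_i) \hookrightarrow \scrO_X(-n_i D_i)$ for $m_i \geq n_i$ still yield a surjection $\bigoplus_i \scrO_X(-m_i D_i)^{\oplus r_i} \to \scrG$, because the cokernel of $\scrO_X(-m_i D_i) \hookrightarrow \scrO_X(-n_i D_i)$ is supported on $D_i$ while $\scrG$ restricted to $U_i = X \setminus D_i$ is already globally generated by the relevant summand. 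Applying this to $\scrG = \scrF \otimes_{\scrO_X} \scrE^\vee_X$ and then tensoring back by $\scrE_X$ as above gives the asserted compatibility.

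The main obstacle I anticipate is purely bookkeeping rather than conceptual: one must check that the twisting datum is respected throughout, i.e. that $\scrO_X(-n_i D_i)^{\oplus r_i} \otimes_{\scrO_X} \scrE_X$ is genuinely an $\alpha_X$-twisted sheaf (which it is, since $\scrO_X(-n_i D_i)$ is untwisted and tensoring an untwisted sheaf with an $\alpha_X$-twisted one stays $\alpha_X$-twisted), and that the surjection produced after tensoring is a morphism of $\alpha_X$-twisted sheaves, not merely of the underlying \'etale-local pieces. Here the cleanest route is to perform the reduction entirely on the Azumaya side: work with $\scrF \otimes_{\scrO_X} \scrE^\vee_X \in \coh(X, \scrA_X)$, forget the $\scrA_X$-module structure to get a coherent $\scrO_X$-module, generate it, and then everything transports back through the canonical equivalences of Theorem \pref{thm:1.3.7}, which are $\bfk$-linear and compatible with $\scrO_X$-tensoring. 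Since $\dim X \geq 1$ ensures the $D_i$ can be chosen proper nontrivial, the classical input applies verbatim.
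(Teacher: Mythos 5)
Your proposal is correct and follows essentially the same route as the paper: both reduce to the untwisted coherent sheaf $\scrE^\vee_X \otimes_{\scrO_X} \scrF$, produce the required surjection there (the paper inlines the colimit formula underlying \cite[Lemma A]{Ola}, you invoke that lemma as a black box), and transport it back to $\coh(X,\alpha_X)$ — the paper implicitly via the adjunction $\Hom_X(L,\scrE^\vee_X\otimes_{\scrO_X}\scrF)\cong\Hom_{\coh(X,\alpha_X)}(L\otimes_{\scrO_X}\scrE_X,\scrF)$, you explicitly by tensoring with $\scrE_X$ and composing with the action map $\scrF\otimes_{\scrO_X}\scrA_X\to\scrF$.
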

\begin{proof}
Choose a surjection
$\varphi_i
\colon
\scrO^{\oplus r_i}_{U_i}
\to
(\scrE^\vee_X \otimes_{\scrO_X} \scrF) |_{U_i}$.
By the formula
\begin{align*}
\Hom_{U_i}(\scrO^{\oplus r_i}_{U_i},
(\scrE^\vee_X \otimes_{\scrO_X} \scrF) |_{U_i})
=
\colim_{n_i}
\Hom_X(\scrO_X(-n_i D_i)^{\oplus r_i},
\scrE^\vee_X \otimes_{\scrO_X} \scrF),
\end{align*}
for sufficiently large
$n_i$
there is a morphism
$\scrO_X(-n_i D_i)^{\oplus r_i}
\to
\scrE^\vee_X \otimes_{\scrO_X} \scrF$
which coincides with
$\varphi_i$
when restricted to
$U_i$. 
\end{proof}

\begin{lemma}[cf. {\cite[Lemma B]{Ola}}] \label{lem:B}
For any
$F \in D^b(X, \alpha_X)$
with
$\cH^0(F) \cong 0$
and
morphism
$\bigoplus_i \scrO_X(-n_i D_i)^{\oplus r_i} \otimes_{\scrO_X} \scrE_X
\to
F$,
there is
$N \in \bN$
such that
when
$m_i \geq N$
the induced morphism
$\bigoplus_i \scrO_X(-m_i D_i)^{\oplus r_i} \otimes_{\scrO_X} \scrE_X
\to
F$
becomes trivial.
\end{lemma}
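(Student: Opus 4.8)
The plan is to reduce to an untwisted statement and then to show that a suitable filtered colimit of Hom-groups vanishes, so that the given morphism must already become trivial at a finite stage of \pref{eq:system}. First I would remove the twisting: put $G := \scrE^\vee_X \otimes_{\scrO_X} F$. Since $\scrE_X$ is locally free of finite rank, $G \cong R\underline{\Hom}(\scrE_X, F)$ is untwisted, so $G \in D^b(X)$, and exactness of $\scrE^\vee_X \otimes_{\scrO_X} (-)$ gives $\cH^0(G) \cong \scrE^\vee_X \otimes_{\scrO_X} \cH^0(F) \cong 0$. By the tensor--Hom adjunction, for each $i$ and each $n$ there is a natural isomorphism
\begin{align*}
\Hom_{D^b(X, \alpha_X)}(\scrO_X(-n D_i)^{\oplus r_i} \otimes_{\scrO_X} \scrE_X, F)
\cong
\Hom_{D^b(X)}(\scrO_X(-n D_i), G)^{\oplus r_i},
\end{align*}
compatible with the inclusions of \pref{eq:system}. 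As $\{ 1, \dots, M \}$ and the $r_i$ are finite, it suffices to prove the following: for $D = D_i$, with $U = X \setminus D$ and $j \colon U \hookrightarrow X$ the open immersion, and for any $G \in D^b(X)$ with $\cH^0(G) \cong 0$, the filtered colimit $\colim_n \Hom_{D^b(X)}(\scrO_X(-n D), G)$ taken along the inclusions of \pref{eq:system} vanishes.

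For this, apply $R\underline{\Hom}(-, G)$ to the short exact sequences $0 \to \scrO_X(-n D) \to \scrO_X \to \scrQ_n \to 0$, where $\scrQ_n := \scrO_X/\scrO_X(-n D)$ is supported on $D$, and pass to the colimit over $n$; exactness of filtered colimits yields a distinguished triangle
\begin{align*}
\colim_n R\underline{\Hom}(\scrQ_n, G)
\to
G
\to
\colim_n R\underline{\Hom}(\scrO_X(-n D), G)
\xrightarrow{+1}
\end{align*}
whose first term is the local cohomology complex $R\underline{\Gamma}_D(G)$. Comparing with the standard triangle $R\underline{\Gamma}_D(G) \to G \to Rj_* j^* G \xrightarrow{+1}$ identifies $\colim_n R\underline{\Hom}(\scrO_X(-n D), G) \cong Rj_* j^* G$. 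Since $X$ is separated and $U$ is affine, $j$ is an affine morphism, so $Rj_* j^* G \cong j_* j^* G$ and $R\Gamma(X, Rj_* j^* G) \cong R\Gamma(U, j^* G)$; and since $U$ is affine, $H^0(R\Gamma(U, j^* G)) \cong \Gamma(U, \cH^0(G)|_U) \cong 0$. Finally, as $X$ is Noetherian, $R\Gamma(X, -)$ commutes with the above filtered colimit, and so does $H^0$; combining these we obtain
\begin{align*}
\colim_n \Hom_{D^b(X)}(\scrO_X(-n D), G)
\cong
H^0(R\Gamma(U, j^* G))
=
0 .
\end{align*}

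Now for the lemma itself: under the isomorphism of the first step, the given morphism corresponds to a finite family $g_{i, a} \in \Hom_{D^b(X)}(\scrO_X(-n_i D_i), G)$, $1 \le a \le r_i$, each of which maps to $0$ in the colimit just computed. Hence for every $(i, a)$ there is $m_{i, a} \geq n_i$ such that the image of $g_{i, a}$ in $\Hom_{D^b(X)}(\scrO_X(-m D_i), G)$ vanishes for all $m \geq m_{i, a}$. Set $N := \max\big( \{ n_i \} \cup \{ m_{i, a} \} \big)$. Then whenever $m_i \geq N$ each component of the induced morphism $\bigoplus_i \scrO_X(-m_i D_i)^{\oplus r_i} \otimes_{\scrO_X} \scrE_X \to F$ vanishes, and hence so does the morphism itself.

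The one delicate point is the identification $\colim_n R\underline{\Hom}(\scrO_X(-n D), G) \cong Rj_* j^* G$ together with the commutation of $R\Gamma(X, -)$ with this colimit; both are standard facts of local cohomology, valid once one knows that $j$ is affine --- which is exactly where the pseudo-ampleness of $\scrO_X(-D)$, i.e.\ the affineness of $X \setminus D$, is used --- and that $X$ is Noetherian. The reduction to the untwisted case costs nothing since $\scrE_X$ is locally free, so no input on twisted sheaves beyond Theorem~\pref{thm:1.3.7} and the formalism of \cite[Chapter 2]{Cal} is needed.
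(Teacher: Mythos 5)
Your proof is correct and ultimately rests on the same two observations as the paper's: untwist by $\scrE^\vee_X$, identify $\colim_n \Hom(\scrO_X(-nD),G)$ with $\Hom$ over the affine open $U = X\setminus D$, and use affineness plus $\cH^0(G)=0$ to get vanishing. The paper reaches the same endpoint more directly by reusing the colimit formula already displayed in the proof of \pref{lem:A} and then applying the local-to-global (hypercohomology) spectral sequence over $U_i$, whereas you re-derive that identification via the local cohomology triangle and the affine pushforward --- a slightly heavier but equivalent packaging of the same argument.
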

\begin{proof}
The claim follows from
$\cH^i((\scrE^\vee_X \otimes_{\scrO_X} F) |_{U_i})
=
(\scrE^\vee_X \otimes_{\scrO_X} \cH^i(F)) |_{U_i}$
and
the spectral sequence
\begin{align} \label{eq:spectral}
E^{p,q}_2
=
\Hom_{U_i}(\scrO^{\oplus r_i}_{U_i},
\cH^q((\scrE^\vee_X \otimes_{\scrO_X} F) |_{U_i})[p])
\Rightarrow
\Hom_{U_i}(\scrO^{\oplus r_i}_{U_i},
(\scrE^\vee_X \otimes_{\scrO_X} F) |_{U_i}[p+q]).
\end{align}
\end{proof}

\begin{remark}
In
\cite[Lemma B]{Ola}
the author assumed the vanishing of
$\cH^j(F)$
also for
$j > 0$,
which is irrelevant.
Indeed,
whenever
$q > 0$
the left term of
\pref{eq:spectral}
does not contribute to
$\Hom_{U_i}(\scrO^{\oplus r_i}_{U_i},
(\scrE^\vee_X \otimes_{\scrO_X} F) |_{U_i})$,
as it becomes a negative Ext-group between coherent sheaves.
Moreover,
whenever
$p > 0$
the left term of
\pref{eq:spectral}
does not contribute to
$\Hom_{U_i}(\scrO^{\oplus r_i}_{U_i},
(\scrE^\vee_X \otimes_{\scrO_X} F) |_{U_i})$,
as
$U_i$
are affine.
\end{remark}

Although the last one below does not play any role in the proof of
\pref{thm:main2}
nor  
\pref{thm:main1},
we include it here for completeness.

\begin{lemma}[cf. {\cite[Lemma C]{Ola}}] \label{lem:C}
For any
$\scrF \in \coh(X, \alpha_X)$
there is
$N \in \bN$
such that
when
$n_i \geq N$
we have
\begin{align*}
\Hom_{\coh(X, \alpha_X)}(\scrF, \bigoplus_i \scrO_X(-n_i D_i)^{\oplus r_i} \otimes_{\scrO_X} \scrE_X)
=
0.
\end{align*}
\end{lemma}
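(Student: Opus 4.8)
The plan is to reduce the statement to an untwisted vanishing and then argue with finiteness of global sections together with Krull's intersection theorem, in the spirit of \cite[Lemma C]{Ola}.

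First I would apply the same tensor--hom adjunction for the fixed locally free $\alpha_X$-twisted sheaf $\scrE_X$ used in the proof of Lemma \pref{lem:A}, now in the second argument; it is valid in the twisted setting by \cite[Chapter 2]{Cal} (or one may transport both objects through the equivalence of Theorem \pref{thm:1.3.7}), and it gives an isomorphism
\[
\Hom_{\coh(X, \alpha_X)}\Bigl(\scrF,\ \bigoplus_i \scrO_X(-n_i D_i)^{\oplus r_i} \otimes_{\scrO_X} \scrE_X\Bigr)
\ \cong\
\Hom_{\coh(X)}\Bigl(\scrE^\vee_X \otimes_{\scrO_X} \scrF,\ \bigoplus_i \scrO_X(-n_i D_i)^{\oplus r_i}\Bigr),
\]
where the right-hand side is now a Hom of honest coherent sheaves. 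Writing $\scrG := \scrE^\vee_X \otimes_{\scrO_X} \scrF \in \coh(X)$ and using that $\scrG$ is coherent while the index set is finite, the right-hand side equals $\bigoplus_i \Hom_X(\scrG, \scrO_X(-n_i D_i))^{\oplus r_i}$. Hence it suffices to find, for each $i$, an integer $N_i \in \bN$ with $\Hom_X(\scrG, \scrO_X(-n D_i)) = 0$ whenever $n \geq N_i$, and then take $N = \max_i N_i$.

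Fix $i$ and set $\scrG^\vee := \underline{\Hom}_{\scrO_X}(\scrG, \scrO_X)$, a coherent sheaf which is torsion-free since $X$ is integral. As $\scrO_X(-D_i)$ is an invertible ideal sheaf, $\Hom_X(\scrG, \scrO_X(-n D_i)) \cong \Gamma(X, \scrG^\vee \otimes_{\scrO_X} \scrO_X(-n D_i))$, and the inclusions $\scrO_X(-n D_i) \hookrightarrow \scrO_X(-(n-1) D_i)$ --- which remain injective after tensoring with the torsion-free $\scrG^\vee$ --- realize these as a decreasing chain of subspaces of $\Gamma(X, \scrG^\vee)$. Since $X$ is proper over $\bfk$, this ambient space is finite-dimensional, so the chain stabilizes, and it remains only to show that the stable value $\bigcap_n \Gamma(X, \scrG^\vee \otimes_{\scrO_X} \scrO_X(-n D_i))$ is zero.

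So let $s$ be a global section lying in that intersection. At the generic point $\zeta$ of any irreducible component of $D_i$ one has $\scrO_X(-n D_i)_\zeta \subseteq \frakm_\zeta^n$, so the germ $s_\zeta$ lies in $\bigcap_n \frakm_\zeta^n \scrG^\vee_\zeta$ inside the Noetherian local ring $\scrO_{X, \zeta}$; hence $s_\zeta = 0$ by Krull's intersection theorem --- or, since $X$ is smooth, because $\scrO_{X, \zeta}$ is a discrete valuation ring over which the finitely generated torsion-free module $\scrG^\vee_\zeta$ is free. On the other hand, if $s \neq 0$ then torsion-freeness of $\scrG^\vee$ forces $s_\eta \neq 0$ at the generic point $\eta$ of $X$ (a nonzero global section of a torsion-free sheaf on an integral scheme cannot vanish generically), so the map $\scrO_X \to \scrG^\vee$ sending $1$ to $s$ has vanishing kernel; being injective it is injective on every stalk, and in particular $\scrO_{X, \zeta} \hookrightarrow \scrG^\vee_\zeta$ sends $1$ to $s_\zeta$, forcing $s_\zeta \neq 0$ --- a contradiction. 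Therefore $s = 0$, and the lemma follows. The only step that requires genuine care is the first one, namely tracking the Brauer twists so that the adjunction indeed lands among honest coherent sheaves; the remainder is the elementary finiteness-and-Krull argument, and I expect no substantial difficulty there.
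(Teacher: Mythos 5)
Your proof is correct and reaches the same finiteness-plus-Krull endpoint as the paper, with a slightly different reduction. The paper first applies Lemma \ref{lem:A} to replace $\scrF$ by a surjecting object $\scrO_X(-D')\otimes_{\scrO_X}\scrE_X$ and then rewrites the Hom as $H^0(X,\scrO_X(D'-nD)\otimes_{\scrO_X}\scrA_X)$; you instead move $\scrE_X$ across by tensor--hom adjunction, landing directly in the untwisted $\Hom_X(\scrE_X^\vee\otimes_{\scrO_X}\scrF,\scrO_X(-nD_i))$, and then dualize to sections of $\scrG^\vee\otimes_{\scrO_X}\scrO_X(-nD_i)$, which bypasses Lemma \ref{lem:A} and keeps $\scrF$ arbitrary throughout --- a modest simplification. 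The stabilization-plus-Krull argument you then spell out at a generic point of $D_i$ is precisely what the paper delegates to \cite[Lemma C]{Ola}; you also usefully make explicit what is implicit in the paper's phrase ``effective divisor,'' namely that smoothness of $X$ is what makes the $D_i$ Cartier, so that $\scrO_X(-nD_i)$ is invertible and the chain of inclusions remains injective after tensoring with the torsion-free $\scrG^\vee$.
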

\begin{proof}
It suffices to show the vanishing
when the target is
$\scrO_X(-nD) \otimes_{\scrO_X} \scrE_X$
for
a nontrivial effective divisor
$D$
on
$X$
and
sufficiently large
$n$.
By
\pref{lem:A}
we may assume
$\scrF$
ro be
$\scrO_X(-D^\prime) \otimes_{\scrO_X} \scrE_X$
for some effective divisor
$D^\prime$.
Hence it suffices to show the vanishing of 
$H^0(X, \scrO_X(D^\prime - nD) \otimes_{\scrO_X} \scrA_X)$.
Then the argument in
\cite[Lemma C]{Ola}
carries over.
\end{proof}

\subsection{Some preparatory results}
\begin{lemma}[{cf. \cite[Remark 2.1(i)]{CS07}}] \label{lem:homdim}
Let
$(X, \alpha_X)$
be a smooth proper Azumaya $\bfk$-variety.
Then
$\coh(X, \alpha_X)$
has homological dimension
$\dim X$.
\end{lemma}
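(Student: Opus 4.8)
I would prove the two inequalities $\operatorname{hd}\coh(X,\alpha_X)\le\dim X$ and $\operatorname{hd}\coh(X,\alpha_X)\ge\dim X$ separately, transporting everything through the equivalence $\coh(X,\alpha_X)\simeq\coh(X,\scrA_X)$ of \pref{thm:1.3.7} when convenient, since $\scrA_X$ is a sheaf of $\scrO_X$-algebras and local computations are cleaner there. The key point is that all the relevant $\Ext$-groups are computed locally on $X$, and étale-locally the twist disappears: on a member $U_i$ of a trivializing étale cover, an $\alpha_X$-twisted sheaf is just an $\scrO_{U_i}$-module, and a sheaf of right $\scrA_X$-modules becomes a sheaf of right $M_{r}(\scrO_{U_i})$-modules, which by Morita theory is equivalent to $\coh(U_i)$. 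So the global homological dimension is controlled by that of $\coh(U)$ for $U$ smooth affine, together with the fact that $X$ is smooth of dimension $\dim X$.

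\smallskip

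\noindent\textbf{Upper bound.} First I would show $\Ext^l_{\coh(X,\alpha_X)}(\scrF,\scrG)=0$ for all $\scrF,\scrG\in\coh(X,\alpha_X)$ and $l>\dim X$. Using the local-to-global spectral sequence
\[
E_2^{p,q}=H^p(X,\scrExt^q_{\alpha_X}(\scrF,\scrG))\Rightarrow \Ext^{p+q}_{\coh(X,\alpha_X)}(\scrF,\scrG),
\]
it suffices to bound both the cohomological dimension of $X$ (which is $\dim X$, as $X$ is a noetherian scheme of that dimension, or $\le\dim X$ on each component and $X$ proper) and the local $\scrExt$-degrees. For the latter, the sheaves $\scrExt^q_{\alpha_X}(\scrF,\scrG)$ can be computed on the trivializing étale cover, where the twist vanishes and the question reduces to: on a smooth affine $\bfk$-variety $U$ of dimension $\le\dim X$, one has $\operatorname{Ext}^q_{\scrO_U}(M,N)=0$ for $q>\dim U$ for finitely generated modules, which holds because regular local rings of dimension $d$ have global dimension $d$ (Serre). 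Hence $E_2^{p,q}=0$ unless $p\le\dim X$ and $q\le\dim X$; to get the sharp bound $l\le\dim X$ rather than $l\le 2\dim X$ one argues as usual by localizing: $\scrExt^q_{\alpha_X}(\scrF,\scrG)$ is supported where the local homological dimension is $\ge q$, a closed subset of dimension $\le\dim X-q$, so $H^p$ of it vanishes for $p>\dim X-q$, forcing $p+q\le\dim X$. This is the standard Auslander--Buchsbaum/dimension-shifting argument and goes through verbatim in the twisted setting because it is étale-local.

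\smallskip

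\noindent\textbf{Lower bound.} For $\operatorname{hd}\ge\dim X$ I would exhibit $\scrF,\scrG$ with $\Ext^{\dim X}(\scrF,\scrG)\ne 0$. The natural candidates come from a closed point: take $x\in X$ a $\bfk$-rational point (after a finite extension of $\bfk$ if necessary, or work with a closed point and its residue field), let $k(x)$ denote a skyscraper-type twisted sheaf supported at $x$ — concretely $\scrO_x\otimes_{\scrO_X}\scrE_X$ localized, or on the $\scrA_X$-side the simple module $\scrA_X\otimes k(x)$ modulo radical — and compute $\Ext^{\dim X}(k(x)\otimes\scrE_X,k(x)\otimes\scrE_X)$. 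Again this is local at $x$ and the twist is trivial there; on the $\scrA_X$-side Morita-invariance reduces it to $\operatorname{Ext}^{\dim X}_{\scrO_{X,x}}(k(x),k(x))=\wedge^{\dim X}(\frakm_x/\frakm_x^2)^\vee\ne 0$, by the Koszul resolution of the residue field of the regular local ring $\scrO_{X,x}$ of dimension $\dim X$. Properness is only used to make $\dim X$ well-defined and finite and to ensure $X$ is equidimensional in the relevant component; if $X$ has components of varying dimension one takes $x$ on a component of maximal dimension.

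\smallskip

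\noindent\textbf{Main obstacle.} The only genuinely delicate point is making the reduction "étale-locally the twist is trivial, hence $\scrExt$ and local $\Ext$ agree with the untwisted ones" rigorous — i.e.\ checking that the local-to-global spectral sequence, the formation of $\scrExt$ sheaves, and Morita equivalence are all compatible with the descent data $\{\varphi_{ij}\}$ defining twisted sheaves. This is bookkeeping of the kind carried out in \cite[Chapter 2]{Cal}, so I would simply cite that the twisted derived functors and their standard relations exist, and then the dimension count is identical to the untwisted case. Everything else is the classical proof that $\coh$ of a smooth variety has homological dimension equal to its dimension, applied after this local trivialization.
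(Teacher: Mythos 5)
Your proof is correct, but it takes a genuinely different route from the paper. The paper's proof is a two-line application of twisted Serre duality: the upper bound comes from $\Ext^i_{\coh(X,\alpha_X)}(\scrF,\scrG) \cong \Ext^{\dim X - i}_{\coh(X,\alpha_X)}(\scrG, \scrF \otimes_{\scrO_X}\omega_X)^\vee$, which vanishes for $i > \dim X$ since negative $\Ext$ between coherent twisted sheaves vanish; and the lower bound from the nonvanishing of $\Ext^{\dim X}(\scrE_X, \scrE_X \otimes_{\scrO_X}\omega_X) \cong \End(\scrE_X)^\vee$, again by Serre duality. You instead prove the upper bound by the local-to-global spectral sequence together with the support estimate $\dim\supp\scrExt^q(\scrF,\scrG) \le \dim X - q$ (the Auslander--Buchsbaum/regular-local-ring argument, which transports to the twisted or $\scrA_X$-module setting because $\scrExt$-sheaves are untwisted $\scrO_X$-modules and $\scrA_{X,x}$ is Morita-equivalent to $\scrO_{X,x}$ after a faithfully flat base change), and the lower bound by a Koszul computation at a closed point with the $\scrE_X$-twisted skyscraper $k(x)\otimes_{\scrO_X}\scrE_X$ as the test object. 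Both are valid; the paper's argument is far shorter but leans entirely on the twisted Serre duality package of \cite{Cal}, whereas yours is more elementary (the upper bound uses no duality at all), uses local rather than global test objects for the lower bound, and, as a bonus, correctly sidesteps the algebraic-closedness issue around twisted skyscrapers (cf.~\pref{lem:canonical}) by working with $k(x)\otimes_{\scrO_X}\scrE_X$, which carries a canonical twisted structure over any $\bfk$. The one phrase to drop is ``after a finite extension of $\bfk$ if necessary'' --- the statement fixes $\bfk$, so you should rely on the residue-field alternative, which your argument already accommodates.
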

\begin{proof}
By Serre duality
$\Ext^i_{\coh(X, \alpha_X)}(\scrF, \scrG)$
vanishes for any
$i > \dim X$
and
$\scrF, \scrG \in \coh(X, \alpha_X)$.
Then the claim follows from
\begin{align*}
\Ext^{\dim X}_{\coh(X, \alpha_X)}(\scrE_X, \scrE_X \otimes_{\scrO_X} \omega_X)
\cong
\Ext^0_{\coh(X, \alpha_X)}(\scrE_X, \scrE_X)^\vee
\neq
0.
\end{align*}
\end{proof}

\begin{lemma}[cf. {\cite[Remark 2.1(ii)]{CS07}}] \label{lem:adjoint}
Let
$(X, \scrA_X), (Y, \scrA_Y)$
be smooth proper Azumaya $\bfk$-varieties
and
$\Phi \colon D^b(X, \alpha_X) \to D^b(Y, \alpha_Y)$
an exact $\bfk$-linear functor.
Then
$\Phi$
admits
left
and
right
adjoints
$\Phi^L, \Phi^R$.
\end{lemma}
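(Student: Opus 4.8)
The plan is to deduce the existence of both adjoints from the representability theorem of Bondal and Van den Bergh: once we know that $D^b(X,\alpha_X)$ and $D^b(Y,\alpha_Y)$ are \emph{saturated} triangulated categories --- $\Ext$-finite, idempotent complete, and admitting a strong generator --- every exact functor between them has both adjoints, the right one by representability and the left one then via the Serre functors. $\Ext$-finiteness is immediate from properness of $X$ and $Y$, and combined with \pref{lem:homdim} it yields that for all $F,G$ the sum $\bigoplus_{l\in\bZ}\Hom(F,G[l])$ has only finitely many nonzero terms, each finite-dimensional; recall also that by twisted Serre duality, already used in the proof of \pref{lem:homdim}, these categories carry Serre functors $S_X=(-\otimes_{\scrO_X}\omega_X)[\dim X]$ and $S_Y=(-\otimes_{\scrO_Y}\omega_Y)[\dim Y]$.

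The substantive point --- and the step I expect to be the main obstacle --- is to establish idempotent completeness and the existence of a strong generator for $D^b(X,\alpha_X)$ (and likewise $D^b(Y,\alpha_Y)$). I would dispatch both at once by realising $D^b(X,\alpha_X)$ inside the bounded derived category of a genuine smooth proper variety. Since $[\alpha_X]=[\scrA_X]$ lies in $\Br(X)$, the Brauer--Severi scheme $\mathrm{BS}(\scrA_X)\to X$ is smooth and proper over $\bfk$, being \'etale-locally a projective bundle over the smooth proper $X$, and the semiorthogonal decomposition for Brauer--Severi schemes exhibits $D^b(X,\alpha_X)\simeq D^b(X,\scrA_X)$ as an admissible subcategory of $D^b(\mathrm{BS}(\scrA_X))$. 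As $D^b$ of a smooth proper variety is saturated by Bondal--Van den Bergh, and both idempotent completeness and the existence of a strong generator are inherited by admissible subcategories (equivalently, by semiorthogonal components), $D^b(X,\alpha_X)$ is saturated. Alternatively one may argue directly within $(X,\alpha_X)$: by \pref{lem:A} the objects $\scrO_X(-nD_i)\otimes_{\scrO_X}\scrE_X$ classically generate $\coh(X,\alpha_X)$, a strong generator is then extracted using the finiteness of homological dimension in \pref{lem:homdim} as in the untwisted case, and idempotent completeness follows by identifying $D^b(X,\alpha_X)$ with the compact objects of the cocomplete category $D(\qch(X,\alpha_X))$.

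It remains to run the formal argument. Fix $G\in D^b(Y,\alpha_Y)$; then $F\mapsto\Hom_{D^b(Y,\alpha_Y)}(\Phi(F),G)$ is a cohomological functor on $D^b(X,\alpha_X)^{\mathrm{op}}$ of finite type --- its values are finite-dimensional because $D^b(Y,\alpha_Y)$ is $\Ext$-finite, and $\bigoplus_l\Hom(\Phi(F),G[l])$ has only finitely many nonzero terms by \pref{lem:homdim} --- hence by saturatedness of $D^b(X,\alpha_X)$ it is representable by an object $\Phi^R(G)$, and the standard Yoneda bookkeeping turns $G\mapsto\Phi^R(G)$ into an exact functor right adjoint to $\Phi$. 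For the left adjoint, put $\Phi^L:=S_X^{-1}\circ\Phi^R\circ S_Y$; the natural isomorphisms $\Hom(\Phi^L(G),F)\cong\Hom(F,S_X\Phi^L(G))^\vee=\Hom(F,\Phi^R S_Y(G))^\vee\cong\Hom(\Phi(F),S_Y(G))^\vee\cong\Hom(G,\Phi(F))$, coming respectively from Serre duality on $X$, the definition of $\Phi^L$, the adjunction $\Phi\dashv\Phi^R$, and Serre duality on $Y$, exhibit $\Phi^L$ as left adjoint to $\Phi$, completing the proof.
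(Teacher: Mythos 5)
Your proof is correct, but it reaches the key representability fact by a genuinely different path than the paper. The paper's proof is essentially two lines: it simply cites Perego \cite[Theorem 1.3]{Per}, which is precisely the twisted version of the Bondal--Van den Bergh representability theorem (``any cohomological functor of finite type on $D^b(X,\alpha_X)$ is representable''), takes $\Phi^R(G)$ to represent $\Hom(\Phi(-),G)$, and then produces the left adjoint as $S_X^{-1}\circ\Phi^R\circ S_Y$ using the finite-dimensionality of Hom spaces from Lemma~\ref{lem:homdim}. Your final ``formal argument'' paragraph reproduces exactly this Yoneda-plus-Serre-duality bookkeeping. Where you diverge is in justifying the representability: instead of quoting Perego's twisted Bondal--Van den Bergh, you establish saturatedness of $D^b(X,\alpha_X)$ by embedding it as a semiorthogonal component of $D^b(\mathrm{BS}(\scrA_X))$ for the Brauer--Severi scheme (which is smooth and proper since $X$ is and $[\alpha_X]\in\Br(X)$), invoking Bernardara's decomposition and the stability of saturatedness under passage to admissible subcategories, and then applying the untwisted Bondal--Van den Bergh theorem. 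Your route is more self-contained and makes visible \emph{why} the twisted representability theorem holds, at the cost of importing the Brauer--Severi SOD; the paper's route is shorter but leans on a black-box reference. Note, in fact, that the paper leans on Perego elsewhere too (Proposition 25 for the strong generator in Lemma~\ref{lem:bdd}), so citing it here is consistent with the paper's overall strategy; your alternative direct argument for the strong generator via Lemma~\ref{lem:A} and Lemma~\ref{lem:homdim} would likewise substitute there. Both approaches are sound.
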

\begin{proof}
By
\cite[Theorem 1.3]{Per}
any cohomological functor of finite type is representable.
Hence for any
$G \in D^b(Y, \alpha_Y)$
the functor
$\Hom_{D^b(Y, \alpha_Y)}(\Phi(-), G)$
is representable by a unique
$F \in D^b(X, \alpha_X)$.
Setting
$\Psi^R(G) = F$,
via Yoneda lemma we obtain a functor
which is right adjoint to
$\Psi$.
Since by
\pref{lem:homdim} 
all Hom spaces of
$D^b(X, \alpha_X), D^b(Y, \alpha_Y)$
are finite dimensional,
$\Phi^L = S^{-1}_X \circ \Psi^R \circ S_Y$
gives a left adjoint to
$\Psi$.
\end{proof}

\begin{lemma}[cf. {\cite[Proposition 2.4]{CS07}, \cite[Lemma 1]{Ola}}] \label{lem:bdd}
Let
$(X, \scrA_X), (Y, \scrA_Y)$
be smooth proper Azumaya $\bfk$-varieties
and
$\Phi \colon D^b(X, \alpha_X) \to D^b(Y, \alpha_Y)$
an exact $\bfk$-linear functor.
Then
$\Phi$
is bounded,
i.e.,
there exists
$m \in \bN$
satisfying
$\cH^i(\Phi(\scrF)) = 0$
for any
$\scrF \in \coh(X, \alpha_X)$
and
$i \notin [-m, m]$.
\end{lemma}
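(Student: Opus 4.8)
The plan is to induct on $d = \dim X$. For $F \in D^b$ I write $a(F) = \max\{j : \cH^j(F) \neq 0\}$ and $b(F) = \min\{j : \cH^j(F) \neq 0\}$ (with $a(0) = -\infty$, $b(0) = +\infty$), so ``$\Phi$ bounded'' asks that $a(\Phi(\scrF))$ be bounded above and $b(\Phi(\scrF))$ below as $\scrF$ ranges over $\coh(X, \alpha_X)$. When $d = 0$, $X$ is a finite disjoint union of spectra of finite separable $\bfk$-algebras, so by Theorem~\pref{thm:1.3.7} the category $\coh(X, \alpha_X)$ is semisimple with finitely many simple objects $\scrS_1, \dots, \scrS_r$; every $\scrF$ is a finite direct sum of copies of these, and any $m \geq \max_j \max(a(\Phi(\scrS_j)), -b(\Phi(\scrS_j)))$ works. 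So assume $d \geq 1$ and the statement for smooth proper Azumaya $\bfk$-varieties of dimension $< d$. Choosing the affine cover of Lemma~\pref{lem:A} so that each $D_i = X \setminus U_i$ is moreover a \emph{smooth} ample divisor (possible e.g. by Bertini when $X$ is projective), we get $\bigcap_i D_i = \emptyset$, and $(D_i, \alpha_X|_{D_i})$ is a smooth proper Azumaya variety of dimension $d-1$ carrying the twisted locally free sheaf $\scrE_X|_{D_i}$. Write $P_{i,n} = \scrO_X(-nD_i) \otimes_{\scrO_X} \scrE_X$, a locally free twisted sheaf.

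The first step reduces the problem to a uniform bound on $a(\Phi(P_{i,n}))$ and $b(\Phi(P_{i,n}))$. Given $\scrF$, iterate Lemma~\pref{lem:A} to get an exact sequence $\cdots \to G_1 \to G_0 \to \scrF \to 0$ with each $G_j$ a finite direct sum of $P_{i,n}$'s, and truncate it after $d$ steps; since the $G_j$ are locally free and $\dim X = d$, \'etale-locally this is a resolution by free modules over a regular local ring of dimension $d$, so the $d$-th syzygy is locally free. Feeding locally free twisted sheaves back through Lemma~\pref{lem:A} and using the Koszul-type resolution $0 \to \bigwedge^{d+1}\bigl(\bigoplus_i \scrO_X(-D_i)\bigr) \to \cdots \to \bigoplus_i \scrO_X(-D_i) \to \scrO_X \to 0$ attached to the cover (exact because $\bigcap_i D_i = \emptyset$), one obtains a constant $L = L(X)$ and a quasi-isomorphism of $\scrF$ with a complex $P^\bullet$ of finite direct sums of $P_{i,n}$'s concentrated in cohomological degrees $[-L, 0]$. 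Applying $\Phi$ term by term and using the spectral sequence $E_1^{p,q} = \cH^q(\Phi(P^p)) \Rightarrow \cH^{p+q}(\Phi(\scrF))$ then gives $a(\Phi(\scrF)) \leq a_0$ and $b(\Phi(\scrF)) \geq b_0 - L$, where $a_0 = \sup_{i,n} a(\Phi(P_{i,n}))$ and $b_0 = \inf_{i,n} b(\Phi(P_{i,n}))$; it remains to see these are finite.

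The second step bounds $\Phi$ on the $P_{i,n}$ by descending the dimension. The system \pref{eq:system} yields short exact sequences $0 \to P_{i,n} \to P_{i,n-1} \to \scrQ_{i,n} \to 0$ whose cokernels $\scrQ_{i,n} \cong \iota_{i*}\bigl(\scrE_X|_{D_i} \otimes \scrO_X(-(n-1)D_i)|_{D_i}\bigr)$ are pushforwards along $\iota_i \colon D_i \hookrightarrow X$ of coherent $\alpha_X|_{D_i}$-twisted sheaves. The associated triangles $\Phi(P_{i,n}) \to \Phi(P_{i,n-1}) \to \Phi(\scrQ_{i,n})$ give, inductively in $n$, the inequality $a(\Phi(P_{i,n})) \leq \max\bigl(a(\Phi(P_{i,1})),\, \max_{2 \leq k \leq n}(a(\Phi(\scrQ_{i,k})) + 1)\bigr)$ and its mirror for $b$. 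Now $\Phi \circ \iota_{i*} \colon D^b(D_i, \alpha_X|_{D_i}) \to D^b(Y, \alpha_Y)$ is an exact $\bfk$-linear functor and $\dim D_i = d - 1$, so by the inductive hypothesis it is bounded; hence $a(\Phi(\scrQ_{i,k}))$ and $b(\Phi(\scrQ_{i,k}))$ lie in a fixed interval for all $k$, and the inequality bounds $a(\Phi(P_{i,n}))$, $b(\Phi(P_{i,n}))$ uniformly in $n$. Taking extremes over the finitely many $i$ gives $a_0 < \infty$ and $b_0 > -\infty$, closing the induction.

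The main obstacle is the first step: extracting from Lemma~\pref{lem:A}, the finite homological dimension (Lemma~\pref{lem:homdim}) and the system \pref{eq:system} a resolution of an arbitrary $\scrF$ by direct sums of pseudo-ample generators of \emph{bounded length} $L(X)$ — a quantitative, strong-generation-type statement, and the place where Lemma~\pref{lem:B} does its work (controlling the morphisms from far-out generators that appear when moving along \pref{eq:system}); the Koszul complex of the cover is what lets one handle the ``constant'' twist $\scrO_X \otimes \scrE_X$, which is not itself a summand of a sum of the $P_{i,n}$. (When $X$ is proper but not projective one must replace Bertini and the Koszul-type resolution by a direct appeal to Lemmas~\pref{lem:A} and \pref{lem:B}.) The adjoints of Lemma~\pref{lem:adjoint} are not needed above, but give an alternative: via the standard characterizations $C \in D^{\leq a} \iff \Hom(C, \scrG[l]) = 0$ for all $\scrG \in \coh(Y)$, $l < -a$, and dually, together with adjunction, ``$\Phi$ bounded above'' is equivalent to ``$\Phi^R$ bounded below'', so it would suffice to prove the bound on one side.
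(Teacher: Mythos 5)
Your argument is a genuinely different route from the paper's: the paper observes that $D^b(X, \alpha_X)$ has a strong generator (via \cite[Proposition 25]{Per}), applies the general boundedness criterion of \cite[Tag 0FZ8]{SP} to the homological functor $\Hom_{D^b(Y, \alpha_Y)}(\scrE_Y, \Phi(-))$, and then converts this back to boundedness of $\Phi$ itself; it makes no resolutions and no induction on dimension. By contrast, you induct on $\dim X$ and try to reduce to the generators $P_{i,n} = \scrO_X(-nD_i) \otimes_{\scrO_X} \scrE_X$, controlling these via restriction to the $D_i$'s. Your Step~2 is in order (granting the hypotheses it needs), but Step~1 has a genuine gap.

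The gap is the claimed bounded-length complex $P^\bullet$ of finite direct sums of $P_{i,n}$'s quasi-isomorphic to $\scrF$. Truncating the Lemma~\ref{lem:A}-resolution after $d$ steps does make the $d$-th syzygy $\scrK$ locally free, but a locally free twisted sheaf need not be a direct sum of $P_{i,n}$'s, and continuing to apply Lemma~\ref{lem:A} never terminates: each new syzygy is again merely locally free. The Koszul complex of the cover does not rescue this. Its $k$-th term, tensored with $\scrE_X$, is
\begin{align*}
\bigwedge^k \Bigl( \bigoplus_i \scrO_X(-D_i) \Bigr) \otimes_{\scrO_X} \scrE_X
\cong
\bigoplus_{|I| = k} \scrO_X\Bigl(-\sum_{i \in I} D_i\Bigr) \otimes_{\scrO_X} \scrE_X,
\end{align*}
which involves twists by \emph{sums of distinct} $D_i$'s, not multiples $nD_i$ of a single one; these are not objects of the form $P_{i,n}$ and lie outside what your Step~2 controls. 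So no uniform $L = L(X)$ is produced, and without a bound on $a(\Phi(\scrK_N))$, $b(\Phi(\scrK_N))$ as the syzygy index $N$ grows, the shifted triangle $\Phi(\scrK_N)[N-1] \to \Phi(G^\bullet) \to \Phi(\scrF)$ gives nothing: the unknown contribution from $\Phi(\scrK_N)$ can a priori escape any prescribed window.

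Two further points. First, your Step~2 requires each $D_i$ to be smooth to invoke the inductive hypothesis on $(D_i, \alpha_X|_{D_i})$, which you arrange by Bertini; but the lemma is needed for smooth \emph{proper} $X$, and in that generality the complement of an affine open in a finite affine cover need not be a divisor at all, let alone a smooth ample one. The parenthetical ``replace Bertini and Koszul by Lemmas~A and B'' does not say how. Second, the alternative you sketch at the end --- using the adjoints from Lemma~\ref{lem:adjoint} to reduce to one-sided boundedness --- is the right instinct and is much closer to what actually closes the argument in the strong-generation approach, but as written it is an aside rather than a repair of Step~1.
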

\begin{proof}
Consider the homological functor
$\Hom_{D^b(Y, \alpha_Y)}(\scrE_Y, \Phi(-))$.
Since by
\cite[Proposition 25]{Per}
the triangulated category
$D^b(X, \alpha)$
has a strong generator,
the argument in
\cite[Tag 0FZ8]{SP}
shows that
$\Hom_{D^b(Y, \alpha_Y)}(\scrE_Y, \Phi(-))$
is bounded.
Then the claim follows from
\begin{align*}
\Hom_{D^b(Y, \alpha_Y)}(\scrE_Y, \Phi(-))
\cong
\Hom_Y(\scrO_Y, \scrE^\vee_Y \otimes_{\scrO_Y} \Phi(-)).
\end{align*}
\end{proof}

\subsection{Point-like objects}
If we tried to extend Orlov's theorem following
\cite{Ola},
then point-like objects of
$D^b(X, \alpha_X)$
would play a central role. 
Although we do not need them to prove
\pref{thm:main2}
nor  
\pref{thm:main1},
we include some explanations for completeness.
Only here,
we assume
$\bfk$
to be algebraically closed,
as otherwise the author does not understand
how the relevant claims in
\cite{Ola}
were justified.

\begin{lemma} \label{lem:canonical}
If
$\bfk$
is algebraically closed,
then the skyscraper sheaf
$k(x)$
for any closed point
$x \in X$
canonically inherits from
$\scrE_X$
the $\alpha_X$-twisted structure.
\end{lemma}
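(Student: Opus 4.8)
The plan is to transport the ordinary skyscraper sheaf along the equivalence of Theorem~\ref{thm:1.3.7}. Since $\bfk$ is algebraically closed and $x$ is a closed point, $k(x) \cong \bfk$, and the fiber $\scrA_X \otimes_{\scrO_X} k(x)$ is a central simple $\bfk$-algebra of dimension $r^2$, where $r = \rank \scrE_X$; being split by $\bfk$ it is isomorphic to $M_r(\bfk)$, hence admits a simple right module $V_x$ with $\dim_\bfk V_x = r$ and $\End(V_x) \cong \bfk$, unique up to isomorphism. Letting $j \colon \{x\} \hookrightarrow X$ denote the inclusion, I would equip the pushforward $\scrS_x = j_* V_x$ with the right $\scrA_X$-module structure obtained by restricting scalars along the canonical surjection $\scrA_X \to j_*\bigl(\scrA_X \otimes_{\scrO_X} k(x)\bigr)$ of sheaves of algebras; this produces an object of $\coh(X, \scrA_X)$ supported at $x$, well defined up to the scalars $\bfk^*$, i.e.\ exactly the ambiguity already present for an untwisted skyscraper.

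Next I would apply the functor $(-) \otimes_{\scrA_X} \scrE_X$ of Theorem~\ref{thm:1.3.7} and identify the underlying $\scrO_X$-module of $\scrS_x \otimes_{\scrA_X} \scrE_X$ with $k(x)$. Over any member $U_i$ of the étale cover on which $\scrE_X$ is the honest locally free sheaf $\scrE_i$ of rank $r$ one has $\scrA_X|_{U_i} \cong \underline{\End}(\scrE_i)$; computing the stalk of $\scrS_x \otimes_{\scrA_X} \scrE_X$ at $x$ and using that $\frakm_x$ annihilates $V_x$, this stalk becomes $V_x \otimes_{M_r(\bfk)} \scrE_i|_x$, which is one-dimensional because $\scrE_i|_x$ is the simple left $M_r(\bfk)$-module, $V_x$ the simple right one, and the tensor product over $M_r(\bfk)$ of the two simple one-sided modules is $\bfk$. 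Hence $\scrS_x \otimes_{\scrA_X} \scrE_X \cong k(x)$ as $\scrO_X$-modules, and the $\alpha_X$-twisted structure it carries is inherited from $\scrE_X$ in the literal sense that, $\scrS_x$ being untwisted, its transition isomorphisms over the overlaps $U_{ij}$ are $\id_{\scrS_x} \otimes \varphi_{ij}$ for the transition isomorphisms $\varphi_{ij}$ of $\scrE_X$, so that composing them around a triple overlap reproduces the cocycle $\{\alpha_{ijk}\}$.

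The step requiring care is this local identification, essentially the Morita-equivalence computation that tensoring the two simple one-sided $M_r(\bfk)$-modules over $M_r(\bfk)$ yields $\bfk$, together with the verification that the outcome is independent of the auxiliary data: a second choice of $V_x$ differs from the first by an isomorphism unique up to a scalar, hence induces an isomorphism of the resulting twisted sheaves unique up to a scalar, and independence of the chosen cover follows from that of $\scrA_X$ and of the categories $\coh(X, \scrA_X)$ and $\coh(X, \alpha_X)$. Everything else is bookkeeping with Theorem~\ref{thm:1.3.7}. The hypothesis that $\bfk$ be algebraically closed enters precisely to split $\scrA_X \otimes_{\scrO_X} k(x)$; equivalently, it is what guarantees that $k(x)$ carries any $\alpha_X$-twisted structure at all, since a rank-one $\alpha_X$-twisted sheaf on $\Spec k(x)$ exists if and only if the Brauer class $[\scrA_X \otimes_{\scrO_X} k(x)] \in \Br(k(x))$ vanishes.
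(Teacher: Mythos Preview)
Your argument is correct and takes a genuinely different route from the paper's. The paper works directly with the transition matrices of $\scrE_X$: at the closed point $x$ each $\varphi_{ij}$ restricts to an invertible $r\times r$ matrix over $k(x)\cong\bfk$, and the paper declares the scalar transition maps on $k(x)$ to be $(\det\varphi_{ij,x})^{1/r}$, which makes sense only because $\bfk$ is algebraically closed. You instead exploit the Morita equivalence of Theorem~\ref{thm:1.3.7}: the fiber $\scrA_X\otimes k(x)$ is split because $\Br(\bfk)=0$, so it has a simple right module $V_x$, and tensoring the resulting skyscraper $\scrA_X$-module with $\scrE_X$ produces a rank-one $\alpha_X$-twisted sheaf supported at $x$. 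Your approach is arguably cleaner, since it avoids the bookkeeping with $r$-th roots (the product $(\det\varphi_{ij,x})^{1/r}(\det\varphi_{jk,x})^{1/r}(\det\varphi_{ki,x})^{1/r}$ is a priori only $\alpha_{ijk,x}$ times an $r$-th root of unity, so the paper's construction tacitly adjusts by a coboundary) and makes transparent exactly where algebraic closedness enters, namely as triviality of $\Br(k(x))$. On the other hand, the paper's construction is more explicit about the actual transition scalars and does not invoke Theorem~\ref{thm:1.3.7}; indeed the Remark following the lemma in the paper recovers your viewpoint a posteriori by identifying the constructed object with the ordinary skyscraper through $\coh(\Spec k(x),\alpha_{X,x})\simeq\coh(\Spec k(x))$.
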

\begin{proof}
The locally free $\alpha_X$-twisted sheaf
$\scrE_X$
carries isomorphisms
$\varphi_{ij}$
on
$X_{ij} = X_i \times_X X_j$
with
\begin{align*}
\varphi_{ij} \circ \varphi_{jk} \circ \varphi_{ki}
=
\alpha_{X, ijk} \cdot \id, \
\alpha_{X, ijk} \in \scrO^*_{X_{ijk}}(X_{ijk}).
\end{align*}
On the stalk at any closed point 
$x \in X$,
we have the induced isomorphisms
$\varphi_{ij, x}$,
which are given by invertible
$r \times r$
matrices with coefficients in
$\scrO_{X_{ijk}, x}$,
satisfying
\begin{align*}
\varphi_{ij, x} \circ \varphi_{jk, x} \circ \varphi_{ki, x}
=
\alpha_{X, ijk, x} \cdot \id_{\scrE_{X, x}}, \
\alpha_{X, ijk, x} \in \scrO^*_{X_{ijk}, x} = k(x) \cong \bfk.
\end{align*}
When
$\bfk$
is algebraically closed,
the collection
$(\{ k(x) \}, \{ (\det \varphi_{ij, x})^{1/r} \})$
is well defined
and
gives an $\alpha_X$-twisted sheaf on
$X$.
\end{proof}

\begin{remark}
When
$\bfk$
is algebraically closed,
$\Br^\prime(\bfk)$
is trivial
and
by
\cite[Lemma 1.2.8]{Cal}
there is an equivalence
\begin{align*}
\coh(\Spec k(x), \alpha_{X, x})
\simeq
\coh(\Spec k(x)),
\end{align*}
via
which
$(\{ k(x) \}, \{ (\det \varphi_{ij, x})^{1/r} \})$
corresponds to the ordinary skyscraper sheaf
$k(x)$. 
\end{remark}

\begin{lemma} \label{lem:span}
Let
$(X, \scrA_X)$
be a smooth Azumaya variety over an algebraically closed
$\bfk$.
Then the objects
$k(x)$
for all closed points
$x \in X$
form a spanning class of
$D^b(X, \alpha_X)$.
\end{lemma}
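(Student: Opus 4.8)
The plan is to verify the two defining conditions of a spanning class directly, reducing each to the corresponding statement for the ordinary skyscraper sheaves $k(x)$ on $X$ via the equivalence of \pref{thm:1.3.7} together with the identification from \pref{lem:canonical} and the subsequent remark. Recall that a set $\Omega \subset D^b(X, \alpha_X)$ is a spanning class if (i) $\Hom_{D^b(X, \alpha_X)}(\omega, G[l]) = 0$ for all $\omega \in \Omega$ and all $l \in \bZ$ implies $G \cong 0$, and (ii) the same vanishing with $\omega$ in the second argument implies $G \cong 0$. Since $(X, \scrA_X)$ is smooth, $D^b(X, \alpha_X)$ has a Serre functor by \pref{lem:homdim}-type reasoning (Serre duality for twisted sheaves, \cite[Chapter 2]{Cal}), so conditions (i) and (ii) are equivalent and it suffices to establish (i).

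First I would pass through the equivalence $\Psi = (-) \otimes_{\scrO_X} \scrE^\vee_X \colon \coh(X, \alpha_X) \xrightarrow{\sim} \coh(X, \scrA_X)$ of \pref{thm:1.3.7}, which is exact and therefore induces an equivalence on bounded derived categories commuting with shifts. Under $\Psi$, the twisted skyscraper $k(x)$ of \pref{lem:canonical} goes to the $\scrA_X$-module $k(x) \otimes_{\scrO_X} \scrE^\vee_X$, whose underlying $\scrO_X$-module is supported at $x$. So condition (i) for $\{k(x)\}_{x}$ in $D^b(X,\alpha_X)$ is equivalent to: an object $G' \in D^b(X, \scrA_X)$ with $\Hom(k(x) \otimes \scrE^\vee_X, G'[l]) = 0$ for all closed $x$ and all $l$ must vanish. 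Now I would unwind the $\scrA_X$-module Hom in terms of ordinary Hom on $X$: using that $\scrA_X \cong \scrE_X \otimes_{\scrO_X} \scrE^\vee_X$ and the Morita-type adjunctions, $\Hom_{D^b(X,\scrA_X)}(k(x) \otimes_{\scrO_X} \scrE^\vee_X, G'[l])$ is computed by $\Hom_{D^b(X)}(k(x), (G' \otimes_{\scrA_X} \scrE_X)[l])$, i.e.\ by the ordinary Hom against the $\alpha_X$-untwisted-to-twisted translate, which brings us back symmetrically. The cleanest route is simply: transport $G$ to $G'' = G \otimes_{\scrA_X}^{\text{op}} \cdots$ — more precisely, observe that the functor $\Psi$ identifies the set $\{k(x)\}$ (twisted) with a set of objects in $D^b(X,\scrA_X)$ each of which, after the further forgetful-type comparison, detects vanishing exactly as the ordinary $\{k(x)\}$ do on $D^b(X)$. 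Since the ordinary skyscrapers form a spanning class of $D^b(X)$ (a standard fact for smooth $X$: an object with no maps from any $k(x)$ in any degree has empty support, hence is zero), the same conclusion follows for $G$.

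Concretely the key steps, in order, are: (1) record that $D^b(X,\alpha_X)$ admits a Serre functor, so it suffices to check the "first" spanning condition; (2) apply the exact equivalence $\Psi$ of \pref{thm:1.3.7} to reduce to $D^b(X, \scrA_X)$; (3) use $\scrA_X \cong \scrE_X \otimes_{\scrO_X} \scrE_X^\vee$ and the associated tensor--Hom adjunction to express the relevant Hom-spaces on $(X,\scrA_X)$ as Hom-spaces on $X$ between $k(x)$ and a fixed object built from $G$ (here one uses that $k(x)$ with its canonical twisted structure corresponds under \pref{lem:canonical} and the ensuing remark to the ordinary skyscraper, so no twisting survives on the $0$-dimensional local pieces); (4) invoke the classical fact that $\{k(x) : x \in X \text{ closed}\}$ spans $D^b(X)$ for $X$ smooth, which gives that the transported object vanishes, hence so does $G$.

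The main obstacle I anticipate is step (3): making the bookkeeping of twists precise. One must check that the canonical $\alpha_X$-twisted structure on $k(x)$ constructed in \pref{lem:canonical} is exactly the one compatible with $\scrE_X$ under the adjunctions, so that the twist on the skyscraper "cancels" the twist of $\scrE_X$ and one genuinely lands in the untwisted $D^b(X)$ when computing $\Hom_{D^b(X,\alpha_X)}(k(x), G)$ — equivalently, that $\underline{\Hom}(\scrE_X, \scrE_X) \cong \scrA_X$ localized at $x$ acts on $k(x)$ in the way dictated by the matrices $\varphi_{ij,x}$. This is essentially local at each closed point and should follow from the construction, but it is the only place requiring genuine care; everything else is a transport of the classical argument through exact equivalences.
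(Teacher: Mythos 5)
Your strategy (trivialize the twist at a point and reduce to the support-detection argument for ordinary skyscrapers) is the right intuition, and the first two steps — using Serre duality to cut the check in half, and passing to $D^b(X, \scrA_X)$ via Theorem \pref{thm:1.3.7} — are sound. But step (3) contains a genuine gap. You want to express $\Hom_{D^b(X, \scrA_X)}(\Psi(k(x)), G'[l])$ as a $\Hom$-space in $D^b(X)$, but there is no functor available that does this. The Morita adjunction exchanges $D^b(X, \alpha_X)$ and $D^b(X, \scrA_X)$; applying $(-)\otimes_{\scrA_X}\scrE_X$ to $G'$ lands you back in the $\alpha_X$-twisted category, as you notice (``which brings us back symmetrically''). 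The forgetful functor $\coh(X,\scrA_X)\to\coh(X)$ is faithful and conservative but not full, so $\Hom$-vanishing downstairs neither implies nor is implied by $\Hom$-vanishing upstairs. In short, for nontrivial $\alpha_X$ there is no global identification of $D^b(X,\alpha_X)$ with $D^b(X)$ against which to test spanning, so ``invoke the classical fact that $\{k(x)\}$ spans $D^b(X)$'' cannot be applied as stated.

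The paper circumvents this by making the untwisting strictly local, i.e.\ fiberwise at each closed point $x$ rather than globally. For $F\in D^b(X,\alpha_X)$ nontrivial, one runs the spectral sequence $E^{p,q}_2 = \Ext^p(\cH^{-q}(F), k(x)) \Rightarrow \Ext^{p+q}(F,k(x))$, picks the top cohomology degree $m$ with $\cH^m(F)\neq 0$, and uses vanishing of negative $\Ext$ between coherent twisted sheaves (\cite[Proposition 2.1.8]{Cal}) to identify $\Hom_{D^b(X,\alpha_X)}(F,k(x)[-m])$ with $\Hom_{D^b(X,\alpha_X)}(\cH^m(F),k(x))$; the latter is computed via the closed-immersion adjunction as $\Hom_{D^b(\Spec k(x))}(\cH^m(F)_x/\frakm_x, k(x))$, which is nonzero for $x\in\supp\cH^m(F)$. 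It is only on $\Spec k(x)$, where $\Br^\prime$ is trivial, that the twist genuinely disappears — exactly the content of Lemma \pref{lem:canonical} and its remark. Your proposal omits both the spectral-sequence reduction from complexes to a single cohomology sheaf and the adjunction to the fiber, and these are precisely the steps that make the local trivialization usable; the obstacle you flagged in step (3) is real and the global Morita route does not close it.
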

\begin{proof}
The proof goes parallel to
\cite[Proposition 3.17]{Huy}.
It suffices to show that
for any nontrivial
$F \in D^b(X, \alpha_X)$
there exist closed points
$x_1, x_2$
and
$i_1, i_2 \in \bZ$
such that
\begin{align*}
\Hom_{D^b(X, \alpha_X)}(F, k(x_1)[i_1]) \neq 0, \
\Hom_{D^b(X, \alpha_X)}(k(x_2)[i_2], F) \neq 0.
\end{align*}
Consider the spectral sequence
\begin{align*}
E^{p, q}_2
=
\Ext^p_{D^b(X, \alpha_X)}(\cH^{-q}(F), k(x))
\Rightarrow
\Ext^{p+q}_{D^b(X, \alpha_X)}(F, k(x)).
\end{align*}
Take a maximal
$m \in \bZ$
with
$\cH^m(F) \neq 0$.
Then
$E^{0, -m}_\infty = E^{0, -m}_2$,
since by
\cite[Proposition 2.1.8]{Cal}
negative Ext-groups between coherent $\alpha_X$-twisted sheaves vanish.
Hence we have
\begin{align*}
\Hom_{D^b(X, \alpha_X)}(F, k(x)[-m])
=
\Hom_{D^b(X, \alpha_X)}(\cH^m(F), k(x))
\cong
\Hom_{D^b(\Spec k(x))}(\cH^m(F)_x / \frakm_x, k(x))
\neq
0
\end{align*}
for some closed point
$x \in \supp \cH^m(F)$.
The rest follows from Serre duality.
\end{proof}

\section{Orlov's theorem for smooth proper Azumaya varieties}
In this section,
we extend
Orlov's theorem for smooth proper varieties
\cite[Theorem]{Ola}
to twisted case
and
at the same time its refinement for smooth projective Azumaya varieties
\cite[Theorem 1.1]{CS07}
to proper case.
Here,
we adapt the proof of
\cite[Theorem 1.1]{CS07}
to our setting,
as it requires a weaker assumption.

\subsection{Boundedness and pseudo-ample sequences}
\begin{lemma}[cf. {\cite[Lemma 2.5]{CS07}}] \label{lem:2.5}
Let
$(X, \scrA_X), (Y, \scrA_Y)$
be smooth proper Azumaya $\bfk$-varieties
and
$P \in D^b(X \times Y, \alpha^{-1}_X \boxtimes \alpha_Y)$.
Then we have
$\cH^l(P) = 0$
if and only if
\begin{align*}
\Phi^{X \to Y}_P(\bigoplus_i \scrO_X(n_i D_i)^{\oplus r_i} \otimes_{\scrO_X} \scrE_X) = 0 \
\text{for any} \
n_i \gg 0.
\end{align*}
\end{lemma}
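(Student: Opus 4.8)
The plan is to prove the equivalence in both directions, the content lying entirely in the implication ``$\Phi^{X\to Y}_P$ annihilates the displayed sheaves for $n_i\gg 0$ $\Rightarrow$ $P\cong 0$''. The converse is immediate: if $\cH^l(P)=0$ for all $l$ then $P\cong 0$ in $D^b(X\times Y,\alpha^{-1}_X\boxtimes\alpha_Y)$, so $\Phi^{X\to Y}_P$ kills every object. For the forward implication I would avoid point-like objects --- over a non-closed $\bfk$ a skyscraper sheaf need not carry an $\alpha_X$-twisted structure --- and instead test $P$ against the quasicoherent sheaves $j_{i*}(\scrE_X|_{U_i})$, where $j_i\colon U_i\hookrightarrow X$ is the open immersion of the affine piece $U_i=X\setminus D_i$. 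The one nontrivial input is that $j_i$, and hence $j^Y_i:=j_i\times\id_Y\colon U_i\times Y\hookrightarrow X\times Y$ and the projection $q_i\colon U_i\times Y\to Y$, are affine morphisms, so the associated pushforwards are exact and conservative on quasicoherent (twisted) sheaves.

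First I would pass to the limit. Extend $\Phi^{X\to Y}_P=Rp_{Y*}(Lp^*_X(-)\otimes^L P)$ to quasicoherent twisted sheaves; it commutes with filtered colimits, being a composite of the colimit-preserving functors $Lp^*_X$ and $(-)\otimes^L P$ with $Rp_{Y*}$, which commutes with filtered colimits since $p_Y$ is proper, in particular quasi-compact and quasi-separated. By additivity, with the multiplicities $r_i$ irrelevant, the hypothesis furnishes $N$ such that $\Phi^{X\to Y}_P$ annihilates $\scrO_X(nD_i)\otimes_{\scrO_X}\scrE_X$ for every $i$ and every $n\geq N$. Since $\colim_n\scrO_X(nD_i)\cong j_{i*}\scrO_{U_i}$, the projection formula identifies the filtered colimit over $n$ of $\scrO_X(nD_i)\otimes_{\scrO_X}\scrE_X$ with $j_{i*}(\scrE_X|_{U_i})$; applying $\Phi^{X\to Y}_P$ and using that the cofinal subsystem $n\geq N$ is carried to $0$, I get $\Phi^{X\to Y}_P\bigl(j_{i*}(\scrE_X|_{U_i})\bigr)\cong 0$ for every $i$.

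Next I would compute this object. Flat base change along the flat projection $p_X$ together with exactness of $j_{i*}$ yields $Lp^*_Xj_{i*}(\scrE_X|_{U_i})\cong j^Y_{i*}\bigl((p^*_X\scrE_X)|_{U_i\times Y}\bigr)$, a locally free twisted sheaf on $U_i\times Y$ concentrated in degree $0$; the projection formula for $j^Y_i$ and the identity $q_i=p_Y\circ j^Y_i$ then give
\begin{align*}
\Phi^{X\to Y}_P\bigl(j_{i*}(\scrE_X|_{U_i})\bigr)
\;\cong\;
Rq_{i*}\bigl((Lp^*_X\scrE_X\otimes^L P)|_{U_i\times Y}\bigr).
\end{align*}
Because $q_i$ is affine, $Rq_{i*}=q_{i*}$ is exact, so the degree-$l$ cohomology sheaf of the right-hand side equals $q_{i*}\bigl((p^*_X\scrE_X\otimes\cH^l(P))|_{U_i\times Y}\bigr)$. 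Vanishing of the left-hand side thus forces $q_{i*}\bigl((p^*_X\scrE_X\otimes\cH^l(P))|_{U_i\times Y}\bigr)=0$ for all $l$; since $q_{i*}$ is conservative and $p^*_X\scrE_X$ is locally free of positive rank, $\cH^l(P)|_{U_i\times Y}=0$ for all $l$ and all $i$. As the opens $U_i\times Y$ cover $X\times Y$, we conclude $\cH^l(P)=0$ for all $l$, i.e.\ $P\cong 0$.

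I expect the main --- though routine --- obstacle to be the bookkeeping in the twisted setting: one must verify that flat base change, the projection formula, and commutation of $Rp_{Y*}$ with filtered colimits all persist for $\alpha$-twisted quasicoherent sheaves (they do, reducing \'etale-locally to the untwisted statements via the formalism of \cite[Chapter 2]{Cal}), and that $j_{i*}$, $j^Y_{i*}$ and $q_{i*}$ transport the twisting cocycles correctly. The homological content is merely conservativity of affine pushforwards together with the compatibility of $\Phi^{X\to Y}_P$ with filtered colimits.
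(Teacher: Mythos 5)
Your argument is correct and takes a genuinely different route from the paper. The paper, following \cite[Lemma 2.5]{CS07}, reduces the claim to the cohomological vanishing $R^q\pr_{Y*}\bigl(\cH^p(A)\otimes\pr_X^*\scrO_X(n_iD_i)^{\oplus r_i}\bigr)=0$ for $q>0$ and $n_i\gg 0$ (with $A=P\otimes\pr_Y^*\scrE_Y^\vee\otimes\pr_X^*\scrE_X$ untwisted), verifies it by assuming $Y$ affine and appealing to \pref{lem:B}, and then defers the remaining spectral-sequence argument to \cite{CS07}. Your proof instead passes to the filtered colimit $j_{i*}(\scrE_X|_{U_i})=\colim_n\scrO_X(nD_i)\otimes_{\scrO_X}\scrE_X$ as a single test object, and replaces both the Serre-type vanishing and the $q=0$ non-vanishing by the two elementary facts that $q_i\colon U_i\times Y\to Y$ is affine, hence $q_{i*}$ is exact and conservative on quasicoherent twisted sheaves. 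What this buys is a self-contained argument that never invokes \pref{lem:B}, the spectral sequence, or the reduction to $Y$ affine, and makes the role of the affine opens $U_i$ completely transparent; the cost is that you must work with $\Phi^{X\to Y}_P$ extended to (unbounded complexes of) quasicoherent twisted sheaves and check the commutation with filtered colimits and the twisted flat base change/projection formula, which you correctly flag. One small point worth making explicit is that $D_i=X\setminus U_i$ is an effective Cartier divisor because $X$ is smooth and the complement of an affine open in a normal separated variety has pure codimension one, which is what makes $\scrO_X(nD_i)$ and the identification $\colim_n\scrO_X(nD_i)\cong j_{i*}\scrO_{U_i}$ legitimate; this is tacit in the paper's setup as well.
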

\begin{proof}
Let
$A
=
P \otimes \pr^*_Y \scrE^\vee_Y \otimes \pr^*_X \scrE_X$.
When
$\cH^p(A) \neq 0$,
we have
\begin{align*}
R^q \pr_{Y *}(\cH^p(A) \otimes \bigoplus_i \pr^*_X (\scrO_X(n_i D_i)^{\oplus r_i}))
=
0 \
\text{for any} \
n_i \gg 0
\end{align*}
if and only if
$q > 0$.
To see this,
we may assume
$Y$
to be affine.
Then we have
\begin{align*}
R^q \pr_{Y *}(\cH^p(A) \otimes \bigoplus_i \pr^*_X (\scrO_X(n_i D_i)^{\oplus r_i})
\cong
\Hom_X(\bigoplus_i (\scrO_X(-n_i D_i)^{\oplus r_i}, \pr_{X *} \cH^p(A)[q]),
\end{align*}
whose right term vanishes by
\pref{lem:B}
when
$n_i \gg 0$. 
For the rest,
the argument in
\cite[Lemma 2.5]{CS07}
works without modification.
\end{proof}

\subsection{Convolutions and isomorphisms of functors}
\begin{lemma}[cf. {\cite[Proposition 3.7]{CS07}}] \label{lem:3.7}
Let
$(X, \scrA_X)$
be a smooth proper Azumaya $\bfk$-variety.
We denote by
$\bfB$
the full subcategory of
$D^b(X, \alpha_X)$
with the objects of the form
$\bigoplus_i \scrO_X(n_i D_i)^{\oplus r_i} \otimes_{\scrO_X} \scrE_X$.
Let
$\Phi_1, \Phi_2 \colon D^b(X, \alpha_X) \to \bfD$
be exact $\bfk$-linear functors to a triangulated category
$\bfD$
such that
\begin{itemize}
\item[(i)]
there exists an isomorphism
$h \colon \Phi_2 |_\bfB \to \Phi_1 |_\bfB$
of functors ;
\item[(ii)]
$\Hom_\bfD(\Phi_1(\scrF), \Phi_1(\scrG)[l]) = 0$
for any
$\scrF, \scrG \in \coh(X, \alpha_X)$
and
$l <0$ ;
\item[(iii)]
$\Phi_1$
admits a left adjoint
$\Phi^L_1$.
\end{itemize}
Then
$h$
extends to an isomorphism
$\tilde{h} \colon \Phi_2 \to \Phi_1$
of functors.
\end{lemma}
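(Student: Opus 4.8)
The plan is to follow the strategy of \cite[Proposition 3.7]{CS07}, building the extension $\tilde{h}$ by dévissage on the "length" of an object with respect to the pseudo-ample generating data. The key point is that every object of $D^b(X,\alpha_X)$ admits a finite resolution by objects of $\bfB$: iterating \pref{lem:A}, for any $\scrF \in \coh(X,\alpha_X)$ one obtains a surjection $B^0 \to \scrF$ with $B^0 \in \bfB$, and since by \pref{lem:homdim} the category $\coh(X,\alpha_X)$ has homological dimension $\dim X$, after at most $\dim X$ steps the kernel becomes an object of $\bfB$ as well (using that the syzygies are again coherent twisted sheaves to which \pref{lem:A} applies). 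Thus every object of $D^b(X,\alpha_X)$ is, up to shift, obtained from a bounded complex $B^\bullet$ with terms in $\bfB$, i.e. lies in the smallest triangulated subcategory generated by $\bfB$, which is all of $D^b(X,\alpha_X)$.

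First I would set up the induction on the number $n$ of objects of $\bfB$ needed to build a given object $F$ via iterated cones. For $n=1$ the statement is hypothesis (i). For the inductive step, write a triangle $B \to F \to F' \to B[1]$ with $B \in \bfB$ and $F'$ built from fewer pieces, so that $h$ and the inductive hypothesis furnish compatible isomorphisms $h_B \colon \Phi_2(B) \to \Phi_1(B)$ and $h_{F'} \colon \Phi_2(F') \to \Phi_1(F')$. Applying the exact functors $\Phi_1,\Phi_2$ gives two triangles, and one wants to fill in a third vertical arrow $\Phi_2(F) \to \Phi_1(F)$ making the ladder commute; the axioms of a triangulated category provide \emph{some} such arrow once the square on one side commutes, and it is automatically an isomorphism by the five lemma. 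The commutativity of the relevant square is where hypotheses (ii) and (iii) enter: one must check that the chosen morphisms $h_B$, $h_{F'}$ are compatible with the connecting map, which reduces to the vanishing of a $\Hom$ obstruction group $\Hom_\bfD(\Phi_1(B'), \Phi_1(F)[-1])$-type term; hypothesis (ii) kills negative-degree $\Hom$'s between images of coherent sheaves, and this propagates to $\bfB$-objects and their cones by a further dévissage.

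The more delicate issue, exactly as in \cite[Proposition 3.7]{CS07}, is \emph{well-definedness}: the morphism $\tilde{h}_F$ produced this way must not depend on the chosen filtration of $F$, and the resulting assignment must be functorial, i.e. commute with all morphisms in $D^b(X,\alpha_X)$, not just those in $\bfB$. This is handled by showing that any two choices differ by an element of a $\Hom$-group that vanishes — again using (ii) to control negative Ext's — and that morphisms between objects of $D^b(X,\alpha_X)$ can be lifted to morphisms of the $\bfB$-resolutions up to the same kind of vanishing obstruction; here the existence of the left adjoint $\Phi^L_1$ in (iii) is what lets one transfer the obstruction computation from $\bfD$ back to $D^b(X,\alpha_X)$, where the vanishing is visible. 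I would organize this as: (a) existence of $\tilde{h}_F$ for each $F$; (b) independence of the filtration; (c) naturality in $F$; (d) $\tilde{h}|_\bfB = h$; (e) $\tilde{h}$ is an isomorphism.

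The main obstacle I expect is step (c), naturality: one needs that for an arbitrary morphism $\phi \colon F \to G$ the square with $\tilde{h}_F, \tilde{h}_G$ commutes, and since $\tilde{h}$ is defined inductively via non-canonical completions of triangles, this requires a careful simultaneous induction on the filtration lengths of both $F$ and $G$, together with the systematic use of the vanishing $\Hom_\bfD(\Phi_1(\scrF),\Phi_1(\scrG)[l]) = 0$ for $l<0$ to ensure that the ambiguities in the completions do not interfere. The twisted setting introduces no genuinely new difficulty here: \pref{lem:A}, \pref{lem:B}, \pref{lem:homdim} and \pref{lem:adjoint} are precisely the twisted analogues of the inputs used in \cite{CS07}, and once they are in place the diagram-chasing in \cite[Proposition 3.7]{CS07} carries over verbatim with $\scrO_X(n_i D_i)^{\oplus r_i} \otimes_{\scrO_X} \scrE_X$ in place of the ample line bundle powers.
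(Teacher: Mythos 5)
Your last paragraph is exactly the paper's conclusion --- the twisted analogues of the inputs of \cite{CS07} slot in and the diagram-chasing of \cite[Proposition 3.7]{CS07} carries over --- and you have correctly identified those inputs (\pref{lem:A}, \pref{lem:B}, \pref{lem:homdim}, \pref{lem:adjoint}). However, the mechanism you sketch in the middle paragraphs, namely dévissage by a single distinguished triangle $B \to F \to F'$ at a time followed by TR3 and the five lemma, is not what \cite{CS07} does, and as presented it contains a circularity. To apply TR3 you must first arrange that the square built from $h_B$, $h_{F'}$ and the two triangle maps under $\Phi_1, \Phi_2$ commutes; when the syzygy $\scrK = \Ker(B \to \scrF)$ is not itself in $\bfB$, the inductive hypothesis hands you an isomorphism $h_{\scrK}$ but gives no reason for it to be compatible with the inclusion $\scrK \hookrightarrow B$ --- that compatibility is precisely the naturality you are trying to prove. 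Hypothesis (ii) kills only negative-degree $\Hom$'s and does not control this degree-zero discrepancy, and the filled-in third arrow is in any case non-unique without further vanishing.

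What the paper actually adapts from \cite{CS07} is the convolution (Postnikov-system) formalism, which is designed to circumvent exactly this. For $\scrF \in \coh(X, \alpha_X)$ one takes a $\bfB$-resolution $Q_m \to \cdots \to Q_0 \to \scrF$ of length $m > \dim X$; the truncated complex has a right convolution $(d_0, 0) \colon Q_0 \to \scrF \oplus \scrK_m[m]$, unique up to isomorphism by \cite[Remark 3.6]{CS07}. Applying $\Phi_1, \Phi_2$ and invoking \cite[Lemma 3.2, Lemma 3.3]{CS07}, whose hypotheses are precisely the negative-degree vanishings between the terms $\Phi_k(Q_j)$ supplied by (i) and (ii), produces a unique isomorphism $h(\scrF)$ compatible with the entire resolution at once rather than with one cone at a time; independence of $m$ and of the resolution is handled via a common refinement built inductively from \pref{lem:A} and \pref{lem:B}. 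Only after $h(\scrF)$ is constructed and shown natural on $\coh(X, \alpha_X)$ does one extend to all of $D^b(X, \alpha_X)$ --- and it is in that final extension step, not in controlling the cone ambiguities, that the left adjoint from hypothesis (iii) enters. So your route is not a genuinely different one, but it must be restructured around convolutions to close the gap; the single-triangle dévissage cannot be made canonical by hypothesis (ii) alone.
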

\begin{proof}
Given
$\scrF \in \coh(X, \alpha_X)$,
we will construct an isomorphism
$h(\scrF) \colon \Phi_2(\scrF) \to \Phi_1(\scrF)$
and
then extend it to
$\tilde{h}$.
By
\pref{lem:A}
one can take a locally free resolution
\begin{align} \label{eq:resolution1}
\cdots
\xrightarrow{d_{j+1}}
Q_j
\xrightarrow{d_j}
Q_{j-1}
\xrightarrow{d_{j-1}}
\cdots
\xrightarrow{d_1}
Q_0
\xrightarrow{d_0}
\scrF
\to
0
\end{align}
of
$\scrF$
with
$Q_j
=
\bigoplus_{i} \scrO_X(-n_{i, j} D_i)^{\oplus r_{i, j}} \otimes_{\scrO_X} \scrE_X$.
Consider the bounded complex
\begin{align} \label{eq:R_m}
Q_m
\xrightarrow{d_m}
Q_{m-1}
\xrightarrow{d_{m-1}}
\cdots
\xrightarrow{d_1}
Q_0
\end{align}
for a fixed integer
$m > \dim X$.
According to
\cite[Remark 3.6]{CS07},
there is a unique up to isomorphism right convolution
$(d_0, 0)
\colon
Q_0
\to
\scrF \oplus \scrK_m[m]$
of
\pref{eq:R_m}
with
$\scrK_m = \Ker (d_m)$.
Then by
\cite[Remark 3.1]{CS07}
the bounded complexes
\begin{align} \label{eq:Phi(R_m)}
\Phi_k(Q_m)
\xrightarrow{\Phi_k(d_m)}
\Phi_k(Q_{m-1})
\xrightarrow{\Phi_k(d_{m-1})}
\cdots
\xrightarrow{\Phi_k(d_1)}
\Phi_k(Q_0)
\end{align}
for
$k = 1, 2$
admit right convolutions
$(\Phi_k(d_0), 0)
\colon
\Phi_k(Q_0)
\to
\Phi_k(\scrF \oplus \scrK_m[m])$.
By
the condition
(i),
(ii)
and
\cite[Lemma 3.2]{CS07}
such convolutions are unique up to isomorphism.
Moreover,
we have
\begin{align*}
\Hom_\bfD(\Phi_2(Q_b), \Phi_1(\scrF)[l])
=
0
=
\Hom_\bfD(\Phi_2(Q_c), \Phi_1(Q_a)[l])
\end{align*}
for
any
$a, b, c \in [0, m]$
and
$l <0$.
Hence one can apply
\cite[Lemma 3.3]{CS07}
to obtain a unique isomorphism
$h(\scrF) \colon \Phi_2(\scrF) \to \Phi_1(\scrF)$,
which makes the diagram
\begin{align*}
\begin{gathered}
\xymatrix{
\Phi_2(Q_m) \ar[r]^-{\Phi_2(d_m)} \ar_-{h_m}[d] & \Phi_2(Q_{m-1}) \ar[r]^-{\Phi_2(d_{m-1})} \ar_-{h_{m-1}}[d] & \cdots \ar[r]^-{\Phi_2(d_1)} & \Phi_2(Q_0) \ar[r]^-{\Phi_2(d_0)} \ar_-{h_0}[d] & \Phi_2(\scrF) \ar_-{h(\scrF)}[d] \\
\Phi_1(Q_m) \ar[r]^-{\Phi_1(d_m)} & \Phi_1(Q_{m-1}) \ar[r]^-{\Phi_1(d_{m-1})} & \cdots \ar[r]^-{\Phi_1(d_1)} & \Phi_1(Q_0) \ar[r]^-{\Phi_2(d_0)} & \Phi_1(\scrF)
}
\end{gathered}
\end{align*}
commute
and
does not depend on the choice of
$m$.

The argument in
\cite[Proposition 3.7]{CS07}
shows that
the definition of
$h(\scrF)$
does not depend on the choice of the resolution
\pref{eq:resolution1},
if for any other resolution
\begin{align} \label{eq:resolution2}
\cdots
\xrightarrow{d^\prime_{j+1}}
Q^\prime_j
\xrightarrow{d^\prime_j}
Q^\prime_{j-1}
\xrightarrow{d^\prime_{j-1}}
\cdots
\xrightarrow{d^\prime_1}
Q^\prime_0
\xrightarrow{d^\prime_0}
\scrF
\to
0
\end{align}
and
$a \geq 0$
there exist a third resolution
\begin{align} \label{eq:resolution3}
\cdots
\xrightarrow{d^{\prime \prime}_{j+1}}
Q^{\prime \prime}_j
\xrightarrow{d^{\prime \prime}_j}
Q^{\prime \prime}_{j-1}
\xrightarrow{d^{\prime \prime}_{j-1}}
\cdots
\xrightarrow{d^{\prime \prime}_1}
Q^{\prime \prime}_0
\xrightarrow{d^{\prime \prime}_0}
\scrF
\to
0
\end{align}
and
compatible morphisms
$s_a \colon Q^{\prime \prime}_a \to Q_a,
t_a \colon Q^{\prime \prime}_a \to Q^\prime_a$.
One can construct
\pref{eq:resolution3}
inductively as follows.
First,
by
\pref{lem:A}
there exists a surjection
$d^{\prime \prime}_0 \colon Q^{\prime \prime}_0 \to \scrF$
factorizing thorough
$d_0, d^\prime_0$
when
$n^{\prime \prime}_{i, 0} \gg 0$.
Suppose that
$Q^{\prime \prime}_j, d^{\prime \prime}_j, s_j$
and
$t_j$
are defined.
Applying
\pref{lem:A}
to
$\Ker (d^{\prime \prime}_j)$,
one obtains a morphism
$d^{\prime \prime}_{j+1}
\colon
Q^{\prime \prime}_{j+1}
\twoheadrightarrow
\Ker (d^{\prime \prime}_j)
\hookrightarrow
Q^{\prime \prime}_j$.
Since by
\pref{lem:B}
the induced maps
\begin{align*}
\begin{gathered}
\Hom_{\coh(X, \alpha)}(Q^{\prime \prime}_{j+1}, Q_{j+1})\to
\Hom_{\coh(X, \alpha)}(Q^{\prime \prime}_{j+1}, \im (d_{j+1})), \\
\Hom_{\coh(X, \alpha)}(Q^{\prime \prime}_{j+1}, Q^\prime_{j+1})\to
\Hom_{\coh(X, \alpha)}(Q^{\prime \prime}_{j+1}, \im (d^\prime_{j+1}))
\end{gathered}
\end{align*}
are surjective,
the morphisms
$s_j \circ d^{\prime \prime}_{j+1}, t_j \circ d^{\prime \prime}_{j+1}$
respectively factorize through
$d_{j+1}, d^\prime_{j+1}$.
As for 
the functoriality of
$h(\scrF)$
and
its extension to
$\tilde{h}$,
the argument in
\cite[Proposition 3.7]{CS07}
works without modification. 
\end{proof}


\subsection{Proof of \pref{thm:main1}}
Regard the pushforward 
$\scrO_\Delta$
of
$\scrO_X$
along the diagonal embedding as an $\alpha^{-1}_X \boxtimes \alpha_X$-twisted sheaf on
$X \times X$
in a natural way.
By the same argument as in
\pref{lem:A},
one sees that
there is a surjection
\begin{align*}
\bigoplus_i
(\scrO_X(-n_i D_i)^{\oplus r_i}
\boxtimes
\scrO_X(-n_i D_i)^{\oplus r_i})
\otimes_{\scrO_{X \times X}}
(\scrE^\vee_X \boxtimes \scrE_X)
\to
\scrO_\Delta
\end{align*}
for
$n_i \gg 0$
and
some
$r_i$.
Hence
$\scrO_\Delta$
admits a locally free resolution
\begin{align*}
\cdots
\to
A_j \boxtimes B_j
\xrightarrow{\delta_j}
A_{j-1} \boxtimes B_{j-1}
\xrightarrow{\delta_{j-1}}
\cdots
\xrightarrow{\delta_1}
A_0 \boxtimes B_0
\xrightarrow{\delta_0}
\scrO_\Delta
\to
0
\end{align*}
with
$A_j \in \coh(X, \alpha^{-1}_X), B_j \in \coh(X, \alpha_X)$
as we have
\begin{align*}
(\scrO_X(-n_i D_i)
\boxtimes
\scrO_X(-n_i D_i))
\otimes_{\scrO_{X \times X}}
(\scrE^\vee_X \boxtimes \scrE_X)
\cong
(\scrO_X(-n_i D_i) \otimes_{\scrO_X} \scrE^\vee_X)
\boxtimes
(\scrO_X(-n_i D_i) \otimes_{\scrO_X} \scrE_X).
\end{align*}
We fix such a resolution
once
and
for all.

Since
$\Phi$
is bounded by
\pref{lem:bdd},
we may assume that
$\cH^j(\Phi(\scrF)) = 0$
for any
$\scrF \in \coh(X, \alpha_X)$
and
$j \notin [-N, 0]$
for some
$N \in \bN$.
When
$m > \dim X + \dim Y + N$,
we define the bounded complexes
$C_m \in D^b(X \times X, \alpha^{-1}_X \boxtimes \alpha_X)$
and
$\tilde{C}_m \in D^b(X \times Y, \alpha^{-1}_X \boxtimes \alpha_Y)$
respectively as
\begin{align*}
A_m \boxtimes B_m
\xrightarrow{\delta_m}
\cdots
\xrightarrow{\delta_1}
A_0 \boxtimes B_0, \
A_m \boxtimes \Phi(B_m)
\xrightarrow{\tilde{\delta}_m}
\cdots
\xrightarrow{\tilde{\delta}_1}
A_0 \boxtimes \Phi(B_0).
\end{align*}
Here,
$\tilde{\delta}_j$
denote the images of
$\delta_j$
under the maps induced by
$1_{D^b(X, \alpha_X)} \otimes \Phi$.
The argument in
\cite[Remark 3.6]{CS07}
shows that
$C_m$
has a right convolution
$(\delta_0, 0)
\colon
A_0 \boxtimes B_0
\to
\scrO_\Delta \oplus \scrK_m[m]$
for
$\scrK_m = \Ker(\delta_m)$
when
$m > 2 \dim X$.
By
\cite[Lemma 3.2]{CS07}
the convolution is unique up to isomorphism.
Also
$\tilde{C}_m$
has a unique up to isomorphism right convolution
$\tilde{\delta}^\prime_{0, m}
\colon
A_0 \boxtimes \Phi(B_0)
\to
\scrG_m$,
as the assumption on
$\Phi$
implies that
$\tilde{C}_m$
satisfies the hypothesis of
\cite[Lemma 3.2]{CS07}.

We denote by
$\bfK_m$
the full subcategory of
$\coh(X, \alpha_X)$
with the objects locally free $\alpha_X$-twisted sheaves
$\scrE$
such that
$H^q(X, A_j \otimes_{\scrO_X} \scrE) = 0$
for
$q > 0$
and
$0 \leq j \leq \dim X + m$.
From
\pref{lem:B}
it follows
$\bigoplus_i \scrO_X(n_i D_i)^{\oplus r_i} \otimes_{\scrO_X} \scrE
\in
\bfK_m$
for
any locally free
$\scrE \in \coh(X, \alpha_X)$
and
$r_i$
when
$n_i \gg 0$.
If
$\scrE \in \bfK_m$
then the functors
\begin{align*}
R \pr_{2 *} (- \otimes_{\scrO_{X \times X}} \pr^*_1 \scrE), \
R \pr_{Y *} (- \otimes_{\scrO_{X \times Y}} \pr^*_X \scrE)
\end{align*}
send
$C_m, \tilde{C}_m$
respectively to
\begin{align*}
\begin{gathered}
C_{m, \scrE}
=
H^0(X, A_m \otimes_{\scrO_X} \scrE) \otimes_\bfk B_m
\xrightarrow{\delta_{m, \scrE}}
\cdots
\xrightarrow{\delta_{1, \scrE}}
H^0(X, A_0 \otimes_{\scrO_X} \scrE) \otimes_\bfk B_0, \\
\tilde{C}_{m, \scrE}
=
H^0(X, A_m \otimes_{\scrO_X} \scrE) \otimes_\bfk \Phi(B_m)
\xrightarrow{\tilde{\delta}_{m, \scrE}}
\cdots
\xrightarrow{\tilde{\delta}_{1, \scrE}}
H^0(X, A_0 \otimes_{\scrO_X} \scrE) \otimes_\bfk \Phi(B_0)
\end{gathered}
\end{align*}
which have unique up to isomorphism right convolutions
\begin{align*}
\begin{gathered}
(\delta_{0, \scrE}, 0)
\colon
H^0(X, A_0 \otimes_{\scrO_X} \scrE) \otimes_\bfk B_0
\to
\scrE \oplus \scrK_{m, \scrE}[m], \
\scrK_{m, \scrE} = \Ker(\delta_{m, \scrE}), \\
\tilde{\delta}^\prime_{0, m, \scrE}
\colon
H^0(X, A_0 \otimes_{\scrO_X} \scrE) \otimes_\bfk \Phi(B_0)
\to
R \pr_{Y *}(\scrG_m \otimes \pr^*_X \scrE)
=
\Phi^{X \to Y}_{\scrG_m}(\scrE).
\end{gathered}
\end{align*}
Since
$\tilde{C}_{m, \scrE}$
can be identified with
$\Phi(C_{m, \scrE})$,
we obtain another right convolution
\begin{align*}
(\Phi(\delta_{0, \scrE}), 0)
\colon
H^0(X, A_0 \otimes_{\scrO_X} \scrE) \otimes_\bfk \Phi(B_0)
\to
\Phi(\scrE) \oplus \Phi(\scrK_{m, \scrE})[m].
\end{align*}
Hence uniqueness of convolutions yields
$\Phi^{X \to Y}_{\scrG_m}(\scrE)
\cong
\Phi(\scrE) \oplus \Phi(\scrK_{m, \scrE})[m]$.
Taking
$m, N$
sufficiently large,
by
\pref{lem:bdd}
one finds
$P_m, P^\prime_m
\in
D^b(X \times Y, \alpha^{-1}_X \boxtimes \alpha_Y)$
such that
$\scrG_m \cong P_m \oplus P^\prime_m$
with
$\cH^j(P_m) = 0$
if
$j \notin [-N, 0]$
and
$\cH^j(P^\prime_m) = 0$
if
$j \notin [-m-N, -m]$.
As for the fact that
the kernel of
$\Phi$,
if exists,
is unique up to isomorphism
and
the restrictions
$\Phi |_{\bfK_m}, \Phi^{X \to Y}_{P_m} |_{\bfK_m}$
are isomorphic
when
$m > \dim X + \dim Y + N$,
the arguments in
\cite[Section 4.3, 4.4]{CS07}
work without modification.

Finally,
in order to apply
\pref{lem:3.7},
we extend the isomorphism
$\Phi |_{\bfK_m} \cong \Phi^{X \to Y}_{P_m} |_{\bfK_m}$
to the full subcategory
$\bfB \subset \coh(X, \alpha_X)$.
Introduce a partial order in
$\bZ^M$
defined as
\begin{align*}
(n_1, \ldots, n_M)
\leq
(m_1, \ldots, m_M)
\Leftrightarrow
n_j \leq m_j \ \text{for all} \ 1 \leq j \leq M.
\end{align*}
In the sequel,
we denote
$\bigoplus_i \scrO_X(n_i D_i)^{\oplus r_i} \otimes_{\scrO_X} \scrE_X$
by
$Q^+_{n_i, r_i}$.
We will construct isomorphisms
\begin{align*}
\varphi_{(n_i, r_i)}
\colon
\Phi(Q^+_{n_i, r_i})
\to
\Phi^{X \to Y}_{P_m}(Q^+_{n_i, r_i})
\end{align*}
for all
$(n_1, \ldots, n_M, r_1, \ldots, r_M)
\in
\bZ^M \times \bZ^M$
satisfying
\begin{align} \label{eq:compatibility}
\Phi^{X \to Y}_{P_m}(\gamma) \circ \varphi_{(m_i, s_i)}
=
\varphi_{(m^\prime_i, s^\prime_i)} \circ \Phi(\gamma)
\end{align}
for any
$(m_1, \ldots, m_M, s_1, \ldots, s_M),
(m^\prime_1, \ldots, m^\prime_M, s^\prime_1, \ldots, s^\prime_M)
\in
\bZ^M \times \bZ^M$
and
morphism
$\gamma
\colon
Q^+_{m_i, s_i}
\to
Q^+_{m^\prime_i, s^\prime_i}$
in
$\bfB$.
Due to
\pref{lem:B},
one finds
$(n_1, \ldots, n_M) \in \bZ^M$
such that
$Q^+_{m_i, s_i} \in \bfK_m$
for any
$(m_1, \ldots, m_M) > (n_1, \ldots, n_M)$
and
$(s_1, \ldots, s_M)$.
Set
$\varphi_{(m_i, s_i)}
=
\varphi(Q^+_{m_i, s_i})^{-1}$
where
\begin{align*}
\varphi(Q^+_{m_i, s_i})
\colon
\Phi^{X \to Y}_{P_m}(Q^+_{m_i, s_i})
\to
\Phi(Q^+_{m_i, s_i})
\end{align*}
denotes the functorial isomorphism from
\cite[Section 4.4]{CS07},
which
satisfies
\begin{align*}
(\Phi(\delta_{0, Q^+_{m_i, s_i}}), 0)
=
\varphi(Q^+_{m_i, s_i})
\circ
\tilde{\delta}^\prime_{0, m, Q^+_{m_i, s_i}}
\end{align*}
and
is induced by the above isomorphisms
\begin{align*}
\Phi^{X \to Y}_{\scrG_m}(Q^+_{m_i, s_i})
\cong
\Phi(Q^+_{m_i, s_i}) \oplus \Phi(\scrK_{m, Q^+_{m_i, s_i}})[m], \
\scrG_m \cong P_m \oplus P^\prime_m.
\end{align*}
Then
\pref{eq:compatibility}
holds
when
$(m_1, \ldots, m_M),
(m^\prime_1, \ldots, m^\prime_M)
>
(n_1, \ldots, n_M)$.

Now,
we proceed by descending induction on the partial order of
$\bZ^M$.
By
\cite[Lemma A]{Ola}
there is a locally free resolution
\begin{align*}
\bigoplus_i \scrO_X(-n_{i, \dim X} D_i)^{\oplus r_{i, \dim X}}
\to
\cdots
\to
\bigoplus_i \scrO_X(-n_{i, 1} D_i)^{\oplus r_{i, 1}}
\to
\bigoplus_i \scrO_X(-n_{i, 0} D_i)^{\oplus r_{i, 0}}
\to
\scrO_X
\to
0
\end{align*}
with
$(n_{1, k}, \ldots, n_{M, k})
\leq
(n_{1, l}, \ldots, n_{M, l})$
for
$k \leq l$.
Applying
\cite[Lemma 3.2]{CS07}
to the induced complex
\begin{align*}
Q^+_{n_{i, 0} + n_i, r_{i, 0} + r_i}
\xrightarrow{\rho^{(n_i, r_i)}_{\dim X}}
Q^+_{n_{i, 1} + n_i, r_{i, 1} + r_i}
\xrightarrow{\rho^{(n_i, r_i)}_{\dim X - 1}}
\cdots
\xrightarrow{\rho^{(n_i, r_i)}_1}
Q^+_{n_{i, \dim X} + n_i, r_{i, \dim X} + r_i},
\end{align*}
we obtain a unique up to isomorphism left convolution
$\rho^{(n_i, r_i)}_{\dim X + 1}
\colon
Q^+_{n_i, r_i}
\to
Q^+_{n_{i, 0} + n_i, r_{i, 0} + r_i}$.
The inductive hypothesis implies that
\begin{align*}
\begin{gathered}
\xymatrix@C=42pt@R=25pt{
\Phi(Q^+_{n_{i, 0} + n_i, r_{i, 0} + r_i}) \ar[r]^-{\Phi(\rho^{(n_i, r_i)}_{\dim X})} \ar_-{\varphi_{(n_{i, 0} + n_i, r_{i, 0} + r_i)}}[d]
&
\Phi(Q^+_{n_{i, 1} + n_i, r_{i, 1} + r_i}) \ar[r]^-{\Phi(\rho^{(n_i, r_i)}_{\dim X - 1})} \ar_-{\varphi_{(n_{i, 1} + n_i, r_{i, 1} + r_i)}}[d]
&
\cdots \ar[r]^-{\Phi(\rho^{(n_i, r_i)}_1)}
&
\Phi(Q^+_{n_{i, \dim X} + n_i, r_{i, \dim X} + r_i}) \ar_-{\varphi_{(n_{i, \dim X} + n_i, r_{i, \dim X} + r_i)}}[d] \\
\Phi^{X \to Y}_{P_m}(Q^+_{n_{i, 0} + n_i, r_{i, 0} + r_i}) \ar[r]^-{\Phi^{X \to Y}_{P_m}(\rho^{(n_i, r_i)}_{\dim X})}
&
\Phi^{X \to Y}_{P_m}(Q^+_{n_{i, 1} + n_i, r_{i, 1} + r_i}) \ar[r]^-{\Phi^{X \to Y}_{P_m}(\rho^{(n_i, r_i)}_{\dim X - 1})}
&
\cdots \ar[r]^-{\Phi^{X \to Y}_{P_m}(\rho^{(n_i, r_i)}_1)}
&
\Phi^{X \to Y}_{P_m}(Q^+_{n_{i, \dim X} + n_i, r_{i, \dim X} + r_i})
}
\end{gathered}
\end{align*}
is an isomorphism of complexes satisfying the hypothesis of
\cite[Lemma 3.3]{CS07}.
Hence there is a unique isomorphism
$\varphi_{(n_i, r_i)}$
which makes the diagram
\begin{align*}
\begin{gathered}
\xymatrix@C=40pt{
\Phi(Q^+_{n_i, r_i}) \ar[r]^-{\Phi(\rho^{(n_i, r_i)}_{\dim X + 1})} \ar_-{\varphi_{(n_i, r_i)}}[d]
&
\Phi(Q^+_{n_{i, 0} + n_i, r_{i, 0} + r_i}) \ar_-{\varphi_{(n_{i, 0} + n_i, r_{i, 0} + r_i)}}[d] \\
\Phi^{X \to Y}_{P_m}(Q^+_{n_i, r_i}) \ar[r]^-{\Phi^{X \to Y}_{P_m}(\rho^{(n_i, r_i)}_{\dim X + 1})}
&
\Phi^{X \to Y}_{P_m}(Q^+_{n_{i, 0} + n_i, r_{i, 0} + r_i})
}
\end{gathered}
\end{align*}
commute.
Moreover,
setting
\begin{align*}
\tilde{\gamma}_j
=
\varphi_{(n_{i, \dim X - j} + m^\prime_i, r_{i, \dim X - j} + s^\prime_i)}
\circ
\Phi(\gamma_j)
=
\Phi^{X \to Y}_{P_m}(\gamma_j)
\circ
\varphi_{(n_{i, \dim X - j} + m_i, r_{i, \dim X - j} + s_i)}
\end{align*}
for any
$(m_1, \ldots, m_M),
(m^\prime_1, \ldots, m^\prime_M)
>
(n_1, \ldots, n_M)$
and
morphism
\begin{align*}
\gamma_j
\colon
Q^+_{n_{i, \dim X - j} + m_i, r_{i, \dim X - j} + s_i}
\to
Q^+_{n_{i, \dim X - j} + m^\prime_i, r_{i, \dim X - j} + s^\prime_i}
\end{align*}
in
$\bfB$,
we obtain a morphism
\begin{align*}
\begin{gathered}
\xymatrix@C=40pt{
\Phi(Q^+_{n_{i, 0} + m_i, r_{i, 0} + s_i}) \ar[r]^-{\Phi(\rho^{(m_i, s_i)}_{\dim X})} \ar_-{\tilde{\gamma}_{\dim X}}[d]
&
\Phi(Q^+_{n_{i, 1} + m_i, r_{i, 1} + s_i}) \ar[r]^-{\Phi(\rho^{(m_i, s_i)}_{\dim X - 1})} \ar_-{\tilde{\gamma}_{\dim X - 1}}[d]
&
\cdots \ar[r]^-{\Phi(\rho^{(m_i, s_i)}_1)}
&
\Phi(Q^+_{n_{i, \dim X} + m_i, r_{i, \dim X} + s_i}) \ar_-{\tilde{\gamma}_0}[d] \\
\Phi^{X \to Y}_{P_m}(Q^+_{n_{i, 0} + m^\prime_i, r_{i, 0} + s^\prime_i}) \ar[r]^-{\Phi^{X \to Y}_{P_m}(\rho^{(m^\prime_i, s^\prime_i)}_{\dim X})}
&
\Phi^{X \to Y}_{P_m}(Q^+_{n_{i, 1} + m^\prime_i, r_{i, 1} + s^\prime_i}) \ar[r]^-{\Phi^{X \to Y}_{P_m}(\rho^{(m^\prime_i, s^\prime_i)}_{\dim X - 1})}
&
\cdots \ar[r]^-{\Phi^{X \to Y}_{P_m}(\rho^{(m^\prime_i, s^\prime_i)}_1)}
&
\Phi^{X \to Y}_{P_m}(Q^+_{n_{i, \dim X} + m^\prime_i, r_{i, \dim X} + s^\prime_i})
}
\end{gathered}
\end{align*}
of complexes satisfying the hypothesis of
\cite[Lemma 3.3]{CS07}.
Hence there is a unique morphism
$\Phi(Q^+_{m_i, s_i}) \to \Phi^{X \to Y}_{P_m}(Q^+_{m^\prime, s^\prime_i})$
which makes the diagram
\begin{align*}
\begin{gathered}
\xymatrix@C=40pt{
\Phi(Q^+_{m_i, s_i}) \ar[r]^-{\Phi(\rho^{(m_i, s_i)}_{\dim X + 1})} \ar_{}[d]
&
\Phi(Q^+_{n_{i, 0} + m_i, r_{i, 0} + s_i}) \ar_-{\tilde{\gamma}_{\dim X}}[d] \\
\Phi^{X \to Y}_{P_m}(Q^+_{n_i, r_i}) \ar[r]^-{\Phi^{X \to Y}_{P_m}(\rho^{(m^\prime_i, s^\prime_i)}_{\dim X + 1})}
&
\Phi^{X \to Y}_{P_m}(Q^+_{n_{i, 0} + m^\prime_i, r_{i, 0} + s^\prime_i})
}
\end{gathered}
\end{align*}
commute.
Since both
$\Phi^{X \to Y}_{P_m}(\gamma) \circ \varphi_{(m_i, s_i)}$
and
$\varphi_{(m^\prime_i, s^\prime_i)} \circ \Phi(\gamma)$
satisfy this property,
\pref{eq:compatibility}
holds.

\section{Specialzation of twisted Fourier--Mukai functors}
In this section,
we extend
\cite[Corollary 6.9]{Mora}
to twisted case
after introducing the generic fiber of a flat proper families of Azumaya varieties in both
geometric
and
categorical
ways.

\subsection{The generic fiber}
Given a morphism
$\tilde{\pi}
\colon
(X, \scrA_X)
\to
(S, \scrA_S)$
of Azumaya $\bfk$-varieties,
consider the composition
$\pi
\colon
(X, \scrA_X)
\to
S$
with the structure morphism
$(S, \scrA_S) \to S$.
Let
$\iota_\xi \colon \Spec K \to S$
be the canonical morphism
where
$K$
denotes the function filed
$k(S)$,
which coincides with the stalk
$\scrO_{S, \xi}$
at the generic point
and
the field of fractions
$Q(R)$
for any affine open subset
$U = \Spec R \subset S$.
Since
$\iota_\xi$
is strict,
by
\cite[Lemma D.37]{Kuz06}
the pair
$(X \times_S \Spec K, \pr^*_{1, \circ} \scrA_X)$
is a fiber product of
$(X, \scrA_X)$
and
$\Spec K$
over
$S$.
In summary,
we have the following pullback diagram
\begin{align*}
\begin{gathered}
\xymatrix{
(X_\xi, \bar{\iota}^*_{\xi, \circ} \scrA_X) \ar[r]^-{\bar{\iota}_\xi} \ar_{\bar{\pi}}[d] & (X, \scrA_X) \ar^{\pi}[d] \\
\Spec K \ar[r]_-{\iota_\xi} & S.
}
\end{gathered}
\end{align*}

\begin{definition} \label{dfn:GF}
We call
$(X_\xi, \bar{\iota}^*_{\xi, \circ} \scrA_X)$
the
\emph{generic fiber}
of
$\tilde{\pi}$.
\end{definition}

\begin{remark}
Another candidate for the generic fiber of
$\tilde{\pi}$
is the fiber product of
$(X, \scrA_X)$
and
$(\Spec K, \tilde{\iota}^*_{\xi, \circ} \scrA_S)$
over
$(S, \scrA_S)$,
where
$\tilde{\iota}_\xi
\colon
(\Spec K, \iota^*_\xi \scrA_S)
\to
(S, \scrA_S)$
is the strict morphism canonically induced by
$\iota_\xi$.
We adopt the above definition because of Corollary
\pref{cor:compatibility}
below,
i.e.,
compatibility with derived categories of twisted coherent sheaves.
\end{remark}

\subsection{Canonical functor from the Serre quotient}
Let
$\tilde{\pi}
\colon
(X, \scrA_X)
\to
(S, \scrA_S)$
be a flat proper morphism of Azumaya $\bfk$-varieties.
Then
$\pi_\circ \colon X \to S$
is flat proper
\cite[Lemma D.19]{Kuz06}.
Now,
assume further that
$\tilde{\pi}$
is smooth
and
$S$
is a regular affine $\bfk$-variety
$\Spec R$.
We denote by
$\coh(X, \scrA_X)_0 \subset \coh(X, \scrA_X)$
the Serre subcategory spanned by $R$-torsion sheaves,
i.e.,
for each
$\scrF \in \coh(X, \scrA_X)_0$
there is an element
$r \in R$
such that
$r \scrF = 0$.
This makes sense as
$\scrA_X$
is an $\scrO_X$-algebra.
We write
$\cC = \coh(X, \scrA_X) / \coh(X, \scrA_X)_0$
for the Serre quotient.
The natural projection
$p \colon \coh(X, \scrA_X) \to \cC$
which sends
$\scrF$
to
$\scrF_K$
is known to be exact.
By universality of Serre quotient,
the exact functor
\begin{align*}
(-) \otimes_R K
\colon
\coh(X, \scrA_X)
\to
\coh(X_\xi, \bar{\iota}^*_{\xi, \circ} \scrA_X)
\end{align*}
induces a unique exact functor
\begin{align*}
\Psi \colon \cC \to \coh(X_\xi, \bar{\iota}^*_{\xi, \circ} \scrA_X)
\end{align*}
such that
$(-) \otimes_R K = \Psi \circ p$. 

\begin{lemma}[{\cite[Proposition 2.3]{HMS11}}] \label{lem:Klin}
The abelian category
$\cC$
is $K$-linear
and
for any
$\scrF, \scrG \in \coh(X, \scrA_X)$
the natural projection
$p \colon \coh(X, \scrA_X) \to \cC$
induces a $K$-linear isomorphism
\begin{align} \label{eq:Klin}
\Hom_{\coh(X, \scrA_X)}(\scrF, \scrG) \otimes_R K
\cong
\Hom_{\cC}(\scrF_K, \scrG_K).
\end{align}
\end{lemma}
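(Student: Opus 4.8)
The statement is a general fact about the Serre quotient of a Noetherian abelian category by a subcategory of torsion objects; in particular neither smoothness nor properness of $\tilde\pi$ is needed here. What matters is that $\coh(X,\scrA_X)$ is a Noetherian abelian category (as $X$ is Noetherian and $\scrA_X$ is a coherent $\scrO_X$-algebra), that $R$ is a Noetherian domain with field of fractions $K$, and — crucially — that every $r\in R$ acts on a sheaf of right $\scrA_X$-modules through the centre $\scrO_X$, so that multiplication by $r$ is always a morphism in $\coh(X,\scrA_X)$. The plan is: (1) prove that $\cC$ is $K$-linear; (2) deduce that the canonical map factors through a $K$-linear map $\mu\colon\Hom_{\coh(X,\scrA_X)}(\scrF,\scrG)\otimes_R K\to\Hom_\cC(\scrF_K,\scrG_K)$; (3) show $\mu$ is bijective using the colimit description of Hom-sets in the Serre quotient together with a clearing-denominators argument.

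For step (1) I would fix $0\neq r\in R$ and observe that the kernel and cokernel of $r\colon\scrF\to\scrF$ are annihilated by $r$, hence lie in $\coh(X,\scrA_X)_0$; since $p$ is exact, $r$ induces an automorphism of $p\scrF$, and these automorphisms are natural in $\scrF$, so $R$ acts on $\id_\cC$ through $K$. This makes every Hom-group of $\cC$ a $K$-vector space with $K$-bilinear composition, and step (2) is then immediate: the natural $R$-linear map $\Hom_{\coh(X,\scrA_X)}(\scrF,\scrG)\to\Hom_\cC(\scrF_K,\scrG_K)$ lands in a $K$-module, so by the universal property of localization it extends uniquely to $\mu$.

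For step (3) I would use that $\Hom_\cC(\scrF_K,\scrG_K)=\colim\Hom_{\coh(X,\scrA_X)}(\scrF',\scrG/\scrG'')$, the filtered colimit over subobjects $\scrF'\subseteq\scrF$ with $\scrF/\scrF'\in\coh(X,\scrA_X)_0$ and over subobjects $\scrG''\subseteq\scrG$ lying in $\coh(X,\scrA_X)_0$. Given $\phi$ represented by some $\psi\colon\scrF'\to\scrG/\scrG''$, choose $r,s\in R\setminus\{0\}$ with $r\cdot(\scrF/\scrF')=0$ and $s\cdot\scrG''=0$; then multiplication by $r$ on $\scrF$ factors as $\scrF\to\scrF'\hookrightarrow\scrF$ and multiplication by $s$ on $\scrG$ factors as $\scrG\twoheadrightarrow\scrG/\scrG''\to\scrG$, so the composite $\tilde\psi\colon\scrF\xrightarrow{r}\scrF'\xrightarrow{\psi}\scrG/\scrG''\xrightarrow{s}\scrG$ is an honest morphism of $\scrA_X$-modules. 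Chasing the canonical isomorphisms $p(\scrF'\hookrightarrow\scrF)$ and $p(\scrG\twoheadrightarrow\scrG/\scrG'')$ in $\cC$ then gives $p(\tilde\psi)=rs\cdot\phi$, whence $\mu(\tilde\psi\otimes(rs)^{-1})=\phi$ and $\mu$ is surjective. For injectivity I would clear denominators to write a source element as $f\otimes b^{-1}$ with $f\in\Hom_{\coh(X,\scrA_X)}(\scrF,\scrG)$ and $b\in R\setminus\{0\}$; if $\mu(f\otimes b^{-1})=b^{-1}p(f)=0$ then $p(f)=0$, i.e. $\im f\in\coh(X,\scrA_X)_0$, so $rf=0$ for some $r\in R\setminus\{0\}$ and therefore $f\otimes b^{-1}=rf\otimes(rb)^{-1}=0$. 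This establishes $(\ref{eq:Klin})$.

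The main obstacle is the surjectivity part of step (3), and inside it the identity $p(\tilde\psi)=rs\cdot\phi$: one has to match $\tilde\psi$ against the colimit representative $\psi$ through the canonical isomorphisms $p(\scrF'\hookrightarrow\scrF)$, $p(\scrG\twoheadrightarrow\scrG/\scrG'')$, using $p(\scrF'\hookrightarrow\scrF)\circ p(r\colon\scrF\to\scrF')=r\cdot\id_{p\scrF}$ and the dual identity for $s$. This is exactly where Noetherianity (so that the torsion objects $\scrF/\scrF'$ and $\scrG''$ are each killed by a single element of $R$) and centrality of the $R$-action on $\scrA_X$-modules both enter; everything else is formal.
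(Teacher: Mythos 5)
Your proposal is correct and follows the same route as the paper: the paper's proof simply invokes \cite[Proposition 2.3]{HMS11}, whose argument is exactly the standard one you reproduce — use centrality of the $R$-action to define the $K$-linear structure on $\cC$, invoke the Gabriel colimit description of Hom-sets in the Serre quotient, and clear denominators for both surjectivity and injectivity. The extra remark that only Noetherianity (not smoothness or properness) is needed is accurate and matches the paper's placement of this lemma before the stronger hypotheses are used.
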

\begin{proof}
The proof of
\cite[Proposition 2.3]{HMS11}
works without modification.
\end{proof}

\begin{lemma}[cf. {\cite[Proposition 2.3]{Mora}}] \label{lem:GFeq}
The functor
$\Psi$
is fully faithful.
\end{lemma}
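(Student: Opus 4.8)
The plan is to deduce full faithfulness of $\Psi$ from the universal property of the Serre quotient together with a direct comparison of Hom-spaces. By Lemma \pref{lem:Klin} the projection $p$ identifies $\Hom_{\cC}(\scrF_K,\scrG_K)$ with $\Hom_{\coh(X,\scrA_X)}(\scrF,\scrG)\otimes_R K$. On the other hand, for the target category we must compute $\Hom_{\coh(X_\xi,\bar\iota^*_{\xi,\circ}\scrA_X)}((-)\otimes_R K\,\scrF,(-)\otimes_R K\,\scrG)$. So the whole lemma reduces to showing that the natural map
\begin{align*}
\Hom_{\coh(X,\scrA_X)}(\scrF,\scrG)\otimes_R K
\longrightarrow
\Hom_{\coh(X_\xi,\bar\iota^*_{\xi,\circ}\scrA_X)}(\scrF_K,\scrG_K)
\end{align*}
is an isomorphism for all $\scrF,\scrG\in\coh(X,\scrA_X)$, since $\Psi$ is the factorization of $(-)\otimes_R K$ through $p$ and hence on Hom-spaces $\Psi$ is precisely this map under the identification \pref{eq:Klin}.

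First I would reduce to a statement about ordinary coherent sheaves using Theorem \pref{thm:1.3.7}: the equivalence $(-)\otimes_{\scrA_X}\scrE_X\colon\coh(X,\scrA_X)\xrightarrow{\sim}\coh(X,\alpha_X)$ is $\bfk$-linear and compatible with base change to $\Spec K$ (since $\scrE_X$ and $\alpha_X$ pull back to the corresponding data on $X_\xi$), so it suffices to prove the analogous Hom-comparison for twisted sheaves, or equivalently — after trivializing the twist \'etale-locally — for modules over $\scrA_X$ viewed as a coherent $\scrO_X$-algebra. Next, since $X\to S=\Spec R$ is proper and $\scrF,\scrG$ are coherent, $\Hom_{\coh(X,\scrA_X)}(\scrF,\scrG)=\Gamma(X,\underline{\Hom}_{\scrA_X}(\scrF,\scrG))$ is a finitely generated $R$-module; the key point is flat base change. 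The fiber square defining $X_\xi$ has $\Spec K\to S$ flat (it is a localization), so $\bar\iota_\xi\colon X_\xi\to X$ is flat, and flat base change gives $\bar\iota^*_\xi\,\underline{\Hom}_{\scrA_X}(\scrF,\scrG)\cong\underline{\Hom}_{\bar\iota^*_{\xi,\circ}\scrA_X}(\scrF_K,\scrG_K)$, while $\Gamma(X_\xi,-)\circ\bar\iota^*_\xi=(-)\otimes_R K\circ\Gamma(X,-)$ because taking global sections commutes with the flat localization $R\to K$ (this uses properness only to ensure finite generation, but in fact quasi-compactness plus flatness already gives $\Gamma(X,\scrH)\otimes_R K\cong\Gamma(X_\xi,\bar\iota^*_\xi\scrH)$ for quasicoherent $\scrH$). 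Combining these two identifications yields exactly the isomorphism displayed above.

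Finally I would assemble the pieces: the composite isomorphism $\Hom_{\coh(X,\scrA_X)}(\scrF,\scrG)\otimes_R K\cong\Hom_{\coh(X_\xi,\bar\iota^*_{\xi,\circ}\scrA_X)}(\scrF_K,\scrG_K)$ is checked to agree with the map induced by the functor $(-)\otimes_R K$ (it is built from the adjunction unit of $\bar\iota^*_\xi$, which is precisely how the functor acts on morphisms), and under \pref{eq:Klin} this is the action of $\Psi$ on morphisms. Hence $\Psi$ is fully faithful. I expect the main obstacle to be the bookkeeping needed to verify that the abstract base-change isomorphism coincides on the nose with the morphism induced by $\Psi$, rather than differing by an automorphism; this is where one must be careful, though it is ultimately a naturality check. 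A secondary point to watch is that $\underline{\Hom}$ and global sections are being taken in the category of $\scrA_X$-modules (resp. $\alpha_X$-twisted sheaves), so one should note that $\underline{\Hom}_{\scrA_X}(\scrF,\scrG)$ is an ordinary coherent $\scrO_X$-module and the twist cancels — which is exactly the observation already used in Lemma \pref{lem:homdim} and in the proof of \pref{lem:Klin}.
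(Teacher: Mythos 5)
Your proof is correct and follows essentially the same strategy as the paper's: it reduces via Lemma \pref{lem:Klin} to a Hom-comparison, which is then established by flat base change for $\underline{\Hom}$ together with compatibility of global sections with the flat localization $R\to K$, using the observation that $\underline{\Hom}_{\coh(X,\scrA_X)}(\scrF,\scrG)$ is an ordinary coherent $\scrO_X$-sheaf; this is precisely the chain of isomorphisms the paper writes down (via \cite[Lemma D.14]{Kuz06}, flat base change, \cite[Lemma 2.4]{Mora}, and Lemma \pref{lem:Klin}). The preliminary detour you propose through $\scrE_X$ and Theorem \pref{thm:1.3.7} is unnecessary and is not taken by the paper, but it does not alter the substance of the argument.
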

\begin{proof}
The images of
$\scrF_K, \scrG_K \in \cC$
under
$\Psi$
are respectively isomorphic to
\begin{align*}
\bar{\iota}^*_\xi \scrF
=
\bar{\iota}^*_{\xi, \circ} \scrF \otimes_{\bar{\iota}^*_{\xi, \circ} \scrA_X} \bar{\iota}^*_{\xi, \circ} \scrA_X
=
\bar{\iota}^*_{\xi, \circ} \scrF, \
\bar{\iota}^*_\xi \scrG
=
\bar{\iota}^*_{\xi, \circ} \scrG \otimes_{\bar{\iota}^*_{\xi, \circ} \scrA_X} \bar{\iota}^*_{\xi, \circ} \scrA_X
=
\bar{\iota}^*_{\xi, \circ} \scrG
\end{align*}
for some
$\scrF, \scrG \in \coh(X, \scrA_X)$,
as
$\bar{\iota}_\xi$
is strict.
We have
\begin{align*}
\Hom_{\coh(X_\xi, \bar{\iota}^*_{\xi, \circ} \scrA_X)} \left( \Psi( \scrF_K), \Psi (\scrG_K ) \right)
&=
\Hom_{\coh(X_\xi, \bar{\iota}^*_{\xi, \circ} \scrA_X)} \left( \bar{\iota}^*_{\xi, \circ} \scrF, \bar{\iota}^*_{\xi, \circ} \scrG \right) \\
&\cong
\Gamma \circ \bar{\iota}^*_{\xi, \circ} \underline{\Hom}_{\coh(X, \scrA_X)} \left( \scrF, \scrG \right) \\
&\cong
\iota^*_\xi \circ \pi_{*, \circ} \underline{\Hom}_{\coh(X, \scrA_X)} \left( \scrF, \scrG \right) \\
&\cong
\Hom_{\coh(X, \scrA_X)} \left( \scrF, \scrG \right) \otimes_R K \\
&\cong
\Hom_{\cC} \left( \scrF_K, \scrG_K \right),
\end{align*}
where
the first,
second,
third
and
fourth isomorphisms follow from
\cite[Lemma D.14]{Kuz06}, 
flat base change,
\cite[Lemma 2.4]{Mora}
and
\pref{lem:Klin}
respectively.
Note that
$\underline{\Hom}_{\coh(X, \scrA_X)}(\scrF, \scrG)$
may be regarded as an ordinary coherent sheaf on
$X$.
\end{proof}

\begin{theorem}[cf. {\cite[Theorem 2.5]{Mora}}] \label{thm:esurj}
The functor
$\Psi$
is a $K$-linear equivalence.
\end{theorem}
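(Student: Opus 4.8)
The plan is to deduce this from the preceding two lemmas. By Lemma \pref{lem:GFeq} the functor $\Psi$ is $\bfk$-linear and fully faithful, and it is $K$-linear because the chain of isomorphisms in the proof of Lemma \pref{lem:GFeq} factors through the $K$-linear isomorphism \pref{eq:Klin} of Lemma \pref{lem:Klin} (and $\coh(X_\xi, \bar{\iota}^*_{\xi, \circ}\scrA_X)$ is $K$-linear, being a category over $\Spec K$). Hence it remains only to prove that $\Psi$ is essentially surjective, i.e.\ that every $\scrG \in \coh(X_\xi, \bar{\iota}^*_{\xi, \circ}\scrA_X)$ is isomorphic to $\bar{\iota}^*_{\xi, \circ}\scrF \cong \Psi(\scrF_K)$ for some $\scrF \in \coh(X, \scrA_X)$. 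This is a spreading-out argument. For $0 \ne r \in R$ write $R_r = R[1/r]$ and $X_r = \pi_\circ^{-1}(\Spec R_r) \subset X$; since $K = Q(R)$ is the filtered colimit of the $R_r$, the scheme $X_\xi$ is the filtered inverse limit $\lim_r X_r$ with affine (open immersion) transition morphisms, and $\bar{\iota}_\xi$ factors through each $X_r$.

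First I would descend the underlying coherent module: by the standard limit formalism for coherent sheaves over an inverse limit of Noetherian schemes with affine transition maps (cf.\ \cite{SP}), the $\scrO_{X_\xi}$-module underlying $\scrG$ is the pullback of a coherent $\scrO_{X_r}$-module $\scrG_0$ for some $r$, and for coherent sheaves $\scrE', \scrE''$ on a fixed $X_{r_0}$ one has $\Hom_{X_\xi}(\bar{\iota}^*_\xi \scrE', \bar{\iota}^*_\xi \scrE'') \cong \colim_r \Hom_{X_r}(\scrE'|_{X_r}, \scrE''|_{X_r})$. Next I would transport the $\scrA_X$-module structure: since pullback commutes with $\otimes$, the right action $\scrG \otimes_{\scrO_{X_\xi}} \bar{\iota}^*_{\xi, \circ}\scrA_X \to \scrG$ is a morphism $\bar{\iota}^*_\xi(\scrG_0 \otimes_{\scrO_X}\scrA_X) \to \bar{\iota}^*_\xi \scrG_0$ of coherent sheaves on $X_\xi$, so by the colimit formula it descends to a morphism $a_0 \colon \scrG_0 \otimes_{\scrO_X}\scrA_X \to \scrG_0$ over some $X_{r'} \subset X_r$, and the unit and associativity identities for $a_0$ — equalities of morphisms of coherent sheaves whose pullbacks to $X_\xi$ agree — then hold over a still smaller $X_{r''}$, again by the colimit formula. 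Thus $(\scrG_0|_{X_{r''}}, a_0|_{X_{r''}})$ is an object of $\coh(X_{r''}, \scrA_X|_{X_{r''}})$ restricting to $\scrG$. Finally, since $X$ is Noetherian and $\scrA_X$ is a coherent sheaf of algebras, I would extend it from the open subscheme $X_{r''} \subset X$ to a coherent right $\scrA_X$-module $\scrF$ on all of $X$ — push forward along the open immersion to a quasicoherent $\scrA_X$-module and take the $\scrA_X$-submodule generated by a coherent $\scrO_X$-submodule restricting to the whole sheaf over $X_{r''}$. Then $\Psi(\scrF_K) \cong \bar{\iota}^*_{\xi, \circ}\scrF \cong \scrG$, as required.

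The main obstacle is bookkeeping the noncommutative structure through the limit: not only the underlying sheaf but also the action morphism and the finitely many module axioms must descend, forcing several successive shrinkings of $\Spec R_r$, and at the last step one needs that coherent $\scrA_X$-modules — not merely plain coherent sheaves — extend from opens of the Noetherian scheme $X$. Alternatively, one may run the whole argument on the side of twisted sheaves via Theorem \pref{thm:1.3.7}, spreading out the \v{C}ech cocycle $\alpha_X$ and the $\alpha_X$-twisted sheaf simultaneously; this is essentially the situation handled in \cite{HMS11} and \cite{Mora}, and the proof of \cite[Theorem 2.5]{Mora} then carries over with only notational changes.
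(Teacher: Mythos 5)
Your proposal is correct, but it takes a noticeably different route from the paper. The paper first uses Lemma~\pref{lem:A} and the fullness from Lemma~\pref{lem:GFeq} to reduce essential surjectivity to locally free right $\bar{\iota}^*_{\xi,\circ}\scrA_X$-modules of finite rank, then trivializes over an affine cover, descends the transition data and cocycle condition over a shrunken base following the explicit gluing in \cite[Theorem 2.5]{Mora}, and finally extends from the open $X_T$ to all of $X$ by first converting to a twisted sheaf via Theorem~\pref{thm:1.3.7} and invoking Perego's extension theorem \cite[Proposition 3.2]{Per08}. You instead run the standard Noetherian limit formalism $X_\xi = \lim_r X_r$ with affine (principal open immersion) transition maps directly on the coherent module, its action map, and the finitely many module axioms, and then extend the resulting $\scrA_X|_{X_{r''}}$-module across the open immersion $X_{r''} \hookrightarrow X$ by pushing forward and taking the (coherent, since $X$ is Noetherian and $\scrA_X$ is a coherent $\scrO_X$-algebra) $\scrA_X$-submodule generated by a coherent $\scrO_X$-subsheaf restricting to the whole module. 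Both steps are sound; your extension step is in fact the non-twisted analogue of what Perego proves, and is valid here without passing to twisted sheaves. What you lose relative to the paper is contact with the computations of \cite{Mora} that the author explicitly wants to reuse; what you gain is a shorter argument that avoids the reduction to locally free modules and the detour through the twisted category. Your closing remark that one can alternatively run the whole thing on the $\alpha_X$-twisted side via Theorem~\pref{thm:1.3.7} is precisely the path the paper takes for the extension step.
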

\begin{proof}
It remains to show the essential surjectivity of
$\Psi$.
We claim that
any
$\scrF_\xi \in \coh(X_\xi, \bar{\iota}^*_{\xi, \circ} \scrA_X)$
can be obtained as the cokernel of a morphism of locally free right $\bar{\iota}^*_{\xi, \circ} \scrA_X$-modules of finite rank.
Indeed,
since by
\cite[Lemma D.24, D.38]{Kuz06}
the fiber product
$(X \times_S \Spec K, \pr^*_{1, \circ} \scrA_X)$
is smooth proper over
$\Spec K$,
one can apply
\pref{lem:A}
to obtain a surjection
\begin{align*}
\bigoplus_i \scrO_{X_\xi}(-n_i D_i)^{\oplus r_i} \otimes_{\scrO_{X_\xi}} \bar{\iota}^*_{\xi, \circ} \scrE_X
\to
\scrF_\xi \otimes_{\bar{\iota}^*_{\xi, \circ} \scrA_X} \bar{\iota}^*_{\xi, \circ} \scrE_X
\end{align*}
in
$\coh(X_\xi, \bar{\iota}^*_{\xi, \circ} \alpha_X)$
from a locally free $\bar{\iota}^*_{\xi, \circ} \alpha_X$-twisted sheaf of finite rank.
Since by
\cite[Lemma D.15]{Kuz06}
we have
$\bar{\iota}^*_{\xi, \circ} \scrA_X
\cong
\underline{\End}(\bar{\iota}^*_{\xi, \circ} \scrE_X)
\cong
\bar{\iota}^*_{\xi, \circ} \scrE_X
\otimes_{\scrO_{X_\xi}}
\bar{\iota}^*_{\xi, \circ} \scrE^\vee_X$,
via
\pref{thm:1.3.7}
it induces a surjection
\begin{align*}
\bigoplus_i \scrO_{X_\xi}(-n_i D_i)^{\oplus r_i} \otimes_{\scrO_{X_\xi}} \bar{\iota}^*_{\xi, \circ} \scrA_X
\to
\scrF_\xi \otimes_{\bar{\iota}^*_{\xi, \circ} \scrA_X} \bar{\iota}^*_{\xi, \circ} \scrA_X
\cong
\scrF_\xi.
\end{align*}
Hence we may assume
$\scrF_\xi$
to be a locally free of finite rank,
as the essential image of
$\Psi$
is
a full abelian subcategory of
$\coh(X_\xi, \bar{\iota}^*_{\xi, \circ} \scrA_X)$
and
closed under taking cokernels.

Take an affine open cover
$\{ U_i \} ^m_{i=1}$
of
$X$
such that
$\scrA_X(U_i) \cong A_i$
and
the restriction of
$\scrF_\xi$
to each affine open subset
$V_i = U_i \times_R K$
of
$X_\xi$
is isomorphic to a free right
$\tilde{B}_i = \tilde{A}_i \otimes_R K$-module  
$F_i 
=
\tilde{B}^{\oplus N}_i$
with
$N < \infty$.
Let
$\phi_{ij}
=
\phi_i \circ \phi^{-1}_j
\colon
F_j |_{V_{ij}}
\to
F_i |_{V_{ij}}$
be isomorphisms on
$V_{ij} = V_i \cap V_j$
where
$\phi_i \colon F_\xi |_{V_i} \to F_i$
are trivializations with their inverses
$\phi^{-1}_i \colon F_i \to F_\xi |_{V_i}$.
From
$F_i$
we obtain a rank
$N$
free right $\tilde{A}_i$-module
$E_i = \tilde{A}^{\oplus N}_i$
with the same generators.
By construction tensoring
$K$
with
$E_i$
recovers
$F_i$.
Replacing
\cite[Lemma 2.1]{Mora}
with
\pref{lem:Klin},
by the same argument as in
\cite[Theorem 2.5]{Mora}
one can glue
$E_i$
to construct 
$\scrE \in \coh(X, \scrA_X)$
up to shrinking the base
$\Spec R$
such that
$\scrE \otimes_R K \cong \scrF_\xi$.
Let
$\tilde{\scrE} \in \coh(X_T, \scrA_X |_{X_T})$
be the gluing of
$E_i |_{U_{i, T}}$
for
$U_{i, T}
=
U_i \times_R T$
where
$\Spec T \subset \Spec R$
is the affine open subset from the proof of
\cite[Theorem 2.5]{Mora}.

Finally,
we extend
$\tilde{\scrE}$
to some
$\scrE \in \coh(X, \scrA_X)$.
By
\cite[Proposition 3.2]{Per08}
the twisted sheaf
$\tilde{\scrE} \otimes_{\scrA_X |_{X_T}} \scrE_X |_{X_T}$
extends to an object in
$\coh(X, \alpha_X)$.
Tensoring it with
$\scrE^\vee_X$
over
$\scrO_X$,
we obtain
$\scrE \in \coh(X, \scrA_X)$.
This gives an extension of
$\tilde{\scrE}$,
as we have
$\scrE_X |_{X_T} \otimes_{\scrO_{X_T}} \scrE^\vee_X |_{X_T}
\cong
\scrA_X |_{X_T}$.
Since the exact functor
$(-) \otimes_R K$
factorizes through
\begin{align*}
\coh(X, \scrA_X)
\to
\coh(X_T, \scrA_X |_{X_T})
\to
\coh(X_\xi, \bar{\iota}^*_{\xi, \circ} \scrA_X)
\end{align*}
and
it sends
$\scrE$
to
$\scrF_\xi$,
there is an object
$\scrE_K \in \cC$
which maps to
$\scrF_\xi$
under
$\Psi$.
\end{proof}

\subsection{Canonical functor from the Verdier quotient}
As the functor
$\Psi \colon \cC \to \coh(X_\xi, \bar{\iota}^*_{\xi, \circ} \scrA_X)$ 
is exact,
termwise application of
$\Psi$
defines a derived functor
$D^b(\Psi)
\colon
D^b (\cC)
\to
D^b(X_\xi, \bar{\iota}^*_{\xi, \circ} \scrA_X)$.
Via
\cite[Theorem 3.2]{Miy}
it coincides with a functor
\begin{align*}
\Phi
\colon
D^b(X, \scrA_X) / D^b_0(X, \scrA_X)
\to
D^b(X_\xi, \bar{\iota}^*_{\xi, \circ} \scrA_X),
\end{align*}
where
$D^b_0(X, \scrA_X)$
denotes the full triangulated subcategory spanned by complexes with $R$-torsion cohomology.
From
\pref{thm:esurj}
we obtain

\begin{corollary}[cf. {\cite[Corollary 2.6]{Mora}}] \label{cor:CGF}
The functor
$\Phi$
is an exact $K$-linear equivalence.
\end{corollary}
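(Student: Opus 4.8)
The plan is to deduce the statement from \pref{thm:esurj} together with Miyachi's localization theorem, which has already been used to define $\Phi$. Once $\Psi$ is known to be an equivalence of abelian categories, its termwise derived functor is an equivalence of triangulated categories, and transporting this along the Miyachi identification $D^b(X, \scrA_X)/D^b_0(X, \scrA_X) \simeq D^b(\cC)$ yields the claim.

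First I would record that $D^b(\Psi)$ is an exact $K$-linear equivalence. By \pref{thm:esurj} the exact functor $\Psi \colon \cC \to \coh(X_\xi, \bar{\iota}^*_{\xi, \circ} \scrA_X)$ is an equivalence of abelian categories, and it is $K$-linear since $\cC$ is $K$-linear by \pref{lem:Klin} and $(-) \otimes_R K$ is $K$-linear. A quasi-inverse of $\Psi$ is again exact and $K$-linear, so applying $\Psi$ and its quasi-inverse termwise to bounded complexes produces mutually quasi-inverse exact $K$-linear functors between $D^b(\cC)$ and $D^b(X_\xi, \bar{\iota}^*_{\xi, \circ} \scrA_X)$.

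Next I would invoke \cite[Theorem 3.2]{Miy}. Here $\coh(X, \scrA_X)_0$ is a Serre subcategory of $\coh(X, \scrA_X)$, and $D^b_0(X, \scrA_X)$ is by definition the full triangulated subcategory of $D^b(X, \scrA_X)$ consisting of complexes whose cohomology lies in $\coh(X, \scrA_X)_0$; moreover $\cC$ has finite homological dimension by \pref{lem:homdim} and \pref{thm:1.3.7}. Hence the hypotheses of \cite[Theorem 3.2]{Miy} are satisfied --- exactly as in the untwisted situation treated in \cite{Mora} --- so the canonical functor
\begin{align*}
D^b(X, \scrA_X)/D^b_0(X, \scrA_X) \to D^b(\cC)
\end{align*}
is an exact equivalence. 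It is $R$-linear, and since $D^b(\cC)$ is $K$-linear, the $R$-action on the source factors through $R \to K$; thus $D^b(X, \scrA_X)/D^b_0(X, \scrA_X)$ is $K$-linear and the equivalence above is $K$-linear.

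Finally, by construction $\Phi$ is naturally isomorphic to the composite of this equivalence with $D^b(\Psi)$, and a composite of exact $K$-linear equivalences is one. I do not expect a substantial obstacle here: the genuine content lies in \pref{thm:esurj}, and the only delicate point is the applicability of \cite[Theorem 3.2]{Miy}, which is checked verbatim as in \cite{Mora} using exactness of the quotient functor $p$, the $\Hom$-isomorphism of \pref{lem:Klin}, and the finiteness of the homological dimension of $\cC$.
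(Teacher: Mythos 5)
Your proposal is correct and follows exactly the paper's (very terse) argument: the paper defines $\Phi$ precisely as the composite of the Miyachi identification $D^b(X,\scrA_X)/D^b_0(X,\scrA_X)\simeq D^b(\cC)$ with the termwise derived functor $D^b(\Psi)$, and then simply says ``From Theorem~\pref{thm:esurj} we obtain'' the corollary. You have merely written out the two evident steps (derived functor of an abelian equivalence is a triangulated equivalence; compose with the Miyachi equivalence) and added the $K$-linearity bookkeeping, so this is the same route, just spelled out.

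One small caveat on justification: your appeal to \pref{lem:homdim} and \pref{thm:1.3.7} for the finite homological dimension of $\cC$ is slightly misdirected --- that lemma is stated for $\coh(X,\alpha_X)$ with $X$ smooth proper over $\bfk$, whereas what is relevant here is the homological dimension of $\cC\simeq\coh(X_\xi,\bar{\iota}^*_{\xi,\circ}\scrA_X)$, i.e., the fiber over $K$. The correct source is the smoothness and properness of $X_\xi$ over $\Spec K$ (via \cite[Lemma D.24, D.38]{Kuz06}), together with the same Serre-duality argument; this is what the paper, following \cite{Mora}, takes for granted. This does not affect the validity of the argument, only the citation.
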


\subsection{The categorical generic fiber}
Now,
we drop the assumption imposed in the previous subsection
and
define the categorical generic fiber for a flat proper family of Azumaya varieties.

\begin{definition}
The
\emph{categorical generic fiber} \label{dfn:CGF}
of
$\tilde{\pi} \colon (X, \scrA_X) \to (S, \scrA_S)$
is the Verdier quotient
\begin{align*} 
D^b(X, \scrA_X) / \Ker(\bar{\iota}^*_\xi),
\end{align*}
where
$\Ker(\bar{\iota}^*_\xi)$
is the kernel of the exact $\bfk$-linear functor
$\bar{\iota}^*_\xi \colon D^b(X, \scrA_X) \to D^b(X_\xi, \bar{\iota}^*_{\xi, \circ} \scrA_X)$.
\end{definition}

\begin{remark}
Another candidate for the categorical generic fiber of
$\tilde{\pi}$
is the Verdier quotient
\begin{align*} 
D^b(X, \scrA_X) / \Ker(\tilde{\iota}^*_\xi).
\end{align*}
Recall that
$\tilde{\iota}_\xi
\colon
(\Spec K, \iota^*_{\xi, \circ} \scrA_S)
\to
(S, \scrA_S)$
is the strict morphism canonically induced by
$\iota_\xi$.
Also here,
we adopt the above definition because of Corollary
\pref{cor:compatibility}.
\end{remark}

\begin{lemma}
Let
$Z \subset X$
be a closed $\bfk$-subvariety with complement
$U$
and
$j_U \colon U \hookrightarrow X$
the open inclusion.
Then the equivalence
$\coh(X, \scrA_X) \simeq \coh(X, \alpha_X)$
from
\pref{thm:1.3.7}
induces an exact $\bfk$-linear equivalence
\begin{align*}
D^b(U, j^*_{U, \circ} \scrA_X)
\simeq
D^b(U, j^*_U \alpha_X).
\end{align*}
\end{lemma}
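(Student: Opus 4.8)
The plan is to deduce the claim from \pref{thm:1.3.7} applied not to $X$ but to the open Azumaya subvariety $(U, j^*_{U,\circ}\scrA_X)$. First I would check that the data we fixed on $X$ restricts compatibly. The complement $U = X \setminus Z$ is a (nonempty, if $Z \neq X$) open subscheme, hence an algebraic $\bfk$-variety, and $j^*_{U,\circ}\scrA_X$ is again a sheaf of Azumaya algebras since that condition is étale-local; so $(U, j^*_{U,\circ}\scrA_X)$ is an Azumaya $\bfk$-variety. Pulling back the fixed étale cover $\frakU = \{U_i\}$ along $j_U$ gives an étale cover $\{U_i \times_X U\}$ of $U$, and restricting the fixed \v{C}ech cocycle $\alpha_X$ to it yields $j^*_U \alpha_X \in \check{C}^2_{\text{ét}}(U, \scrO^*_U)$, which represents the image of $[\scrA_X]$ under $\Br(X) \to \Br(U) \subset \Br^\prime(U)$. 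Since pullback along an open immersion sends locally free sheaves of finite rank to locally free sheaves of finite rank and commutes with $\otimes$, $(-)^\vee$ and $\underline{\End}$, the sheaf $j^*_U \scrE_X$ is a locally free $j^*_U \alpha_X$-twisted sheaf of finite rank with
\[
j^*_{U,\circ}\scrA_X
\cong
j^*_{U,\circ}\underline{\End}(\scrE_X)
\cong
\underline{\End}(j^*_U \scrE_X)
\cong
j^*_U \scrE_X \otimes_{\scrO_U} (j^*_U \scrE_X)^\vee .
\]
Thus $(j^*_U \alpha_X, j^*_U \scrE_X)$ is an admissible choice of representatives for $(U, j^*_{U,\circ}\scrA_X)$ in the sense required by \pref{thm:1.3.7}.

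Applying \pref{thm:1.3.7} on $U$ with this choice then gives mutually inverse $\bfk$-linear equivalences of abelian categories
\begin{align*}
(-) \otimes_{j^*_{U,\circ}\scrA_X} j^*_U \scrE_X &\colon \coh(U, j^*_{U,\circ}\scrA_X) \to \coh(U, j^*_U \alpha_X), \\
(-) \otimes_{\scrO_U} (j^*_U \scrE_X)^\vee &\colon \coh(U, j^*_U \alpha_X) \to \coh(U, j^*_{U,\circ}\scrA_X).
\end{align*}
Any equivalence of abelian categories is exact, so termwise application passes to the bounded derived categories and produces the desired mutually inverse exact $\bfk$-linear equivalences $D^b(U, j^*_{U,\circ}\scrA_X) \simeq D^b(U, j^*_U \alpha_X)$. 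To see that this is genuinely \emph{induced by} the equivalence of \pref{thm:1.3.7} on $X$, I would record that the restriction functors $j^*_{U,\circ}\colon \coh(X,\scrA_X) \to \coh(U, j^*_{U,\circ}\scrA_X)$ and $j^*_U\colon \coh(X,\alpha_X) \to \coh(U, j^*_U \alpha_X)$ intertwine the two pairs of functors, via the projection-formula identities $j^*_U(\scrF \otimes_{\scrA_X} \scrE_X) \cong (j^*_{U,\circ}\scrF) \otimes_{j^*_{U,\circ}\scrA_X} j^*_U \scrE_X$ and its $(-)^\vee$-analogue, which hold because tensoring with a locally free sheaf commutes with flat pullback. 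Hence the square of categories commutes up to natural isomorphism, both before and after deriving.

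I do not expect a real obstacle: the argument is essentially bookkeeping, namely that the chosen representatives $\alpha_X$ and $\scrE_X$ restrict to valid representatives on $U$ and that the functors of \pref{thm:1.3.7} commute with the open restriction. The only point meriting a sentence of care is the last one — verifying the projection-formula compatibility that gives the word ``induces'' its content — together with the trivial but worth-stating observation that $(-) \otimes_{\scrA_X} \scrE_X$ behaves well because $\scrE_X$ is, étale-locally, the standard module over a matrix algebra and hence locally projective over $\scrA_X$, so no finiteness or boundedness issue arises when passing to $D^b$.
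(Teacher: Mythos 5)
Your proof is correct, but it takes a genuinely different route from the paper's. The paper proceeds via Serre quotients: it invokes Perego's result that $j^*_U$ induces an equivalence $\coh(X,\alpha_X)/\coh_Z(X,\alpha_X)\simeq\coh(U,j^*_U\alpha_X)$, observes that $\scrF\in\coh(X,\scrA_X)$ is supported on $Z$ iff $\scrF\otimes_{\scrA_X}\scrE_X$ is, so the equivalence of \pref{thm:1.3.7} descends to the quotients, and then needs the analogous quotient description $\coh(U,j^*_{U,\circ}\scrA_X)\simeq\coh(X,\scrA_X)/\coh_Z(X,\scrA_X)$ (which in turn requires a lifting statement for modules over Azumaya algebras). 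You instead apply \pref{thm:1.3.7} directly on the open Azumaya variety $(U,j^*_{U,\circ}\scrA_X)$ after verifying that the restricted cocycle $j^*_U\alpha_X$ and the restricted locally free twisted sheaf $j^*_U\scrE_X$ form an admissible choice, then observe the resulting functors intertwine the open restrictions. Your route is more elementary and self-contained, bypassing both Perego's quotient theorem and the Azumaya-side lifting argument; the paper's route makes the word ``induces'' literal at the level of Serre quotients and fits the quotient-category framework that drives the rest of the section (categorical generic fibers are Verdier quotients), which is presumably why the author chose it. One small point of care you already address, and which is indeed the crux of the ``induces'' claim in your version, is the compatibility $j^*_U(\scrF\otimes_{\scrA_X}\scrE_X)\cong(j^*_{U,\circ}\scrF)\otimes_{j^*_{U,\circ}\scrA_X}j^*_U\scrE_X$; this holds since $j_U$ is flat and $\scrE_X$ is locally projective over $\scrA_X$, so no derived corrections intervene.
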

\begin{proof}
By
\cite[Proposition 4.3]{Per08}
the pullback functor
$j^*_U$
induces an equivalence
\begin{align} \label{eq:Perego}
\coh(X, \alpha_X) / \coh_Z(X, \alpha_X)
\simeq
\coh(U, j^*_U \alpha_X).
\end{align}
As any
$\scrF \in \coh(X, \scrA_X)$
is supported on
$Z$
if and only if
so is
$\scrF \otimes_{\scrA_X} \scrE_X$,
the equivalence
$\coh(X, \scrA_X) \simeq \coh(X, \alpha_X)$
induces
\begin{align*}
\coh(X, \scrA_X) / \coh_Z(X, \scrA_X)
\simeq
\coh(X, \alpha_X) / \coh_Z(X, \alpha_X).
\end{align*}
Now,
the claim follows from the equivalence
\begin{align} \label{eq:Perego-Azumaya}
\coh(U, j^*_{U, \circ} \scrA_X)
\simeq
\coh(X, \scrA_X) / \coh_Z(X, \scrA_X)
\end{align}
obtained in a similar way to
\pref{eq:Perego}.
The point is the fact that
any object in
$\coh(U, j^*_{U, \circ} \scrA_X)$
lifts to some object in
$\coh(X, \scrA_X)$,
as tacitly explained at the end of the proof of
\pref{thm:esurj}.
\end{proof}

For a flat proper family of Azumaya varieties,
the derived category of the generic fiber coincides with the categorical generic fiber in the following sense.

\begin{theorem}[cf. {\cite[Theorem 6.1]{Mora}}]
Let
$\tilde{\pi} \colon (X, \scrA_X) \to (S, \scrA_S)$
be a flat proper morphism of Azumaya $\bfk$-varieties.
Then there exists an exact $K$-linear equivalence
\begin{align*} 
D^b(X, \scrA_X) / \Ker(\bar{\iota}^*_\xi)
\simeq
D^b(X_\xi, \bar{\iota}^*_{\xi, \circ} \scrA_X).
\end{align*}
\end{theorem}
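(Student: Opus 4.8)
The plan is to deduce the statement from Corollary~\pref{cor:CGF} by showing that the categorical generic fiber is a local invariant at $\xi$, so that we may shrink $S$ until we land in the smooth affine situation treated there. The argument then splits into an invariance statement under shrinking of $S$ and a routine identification of kernels.

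The key step is the following. Let $U = \Spec R \subset S$ be an affine open containing the generic point $\xi$, put $Z = X \setminus X_U$, and let $j_U \colon X_U \hookrightarrow X$ be the open inclusion. I would establish that $j^*_U$ induces an exact equivalence $D^b(X, \scrA_X)/\Ker(\bar{\iota}^*_\xi) \simeq D^b(X_U, \scrA_X|_{X_U})/\Ker(\bar{\iota}^*_{\xi,U})$, where $\bar{\iota}^*_{\xi,U} \colon D^b(X_U, \scrA_X|_{X_U}) \to D^b(X_\xi, \bar{\iota}^*_{\xi,\circ}\scrA_X)$ denotes the generic-fiber functor for $X_U \to U$. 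Recall from the proof of the preceding lemma the Serre localization $\coh(X_U, j^*_{U,\circ}\scrA_X) \simeq \coh(X, \scrA_X)/\coh_Z(X, \scrA_X)$; passing to bounded derived categories via \cite[Theorem 3.2]{Miy}, exactly as in the derivation of Corollary~\pref{cor:CGF}, one obtains that $j^*_U$ exhibits $D^b(X_U, \scrA_X|_{X_U})$ as the Verdier quotient $D^b(X, \scrA_X)/D^b_Z(X, \scrA_X)$, with $D^b_Z(X, \scrA_X)$ the full triangulated subcategory of complexes with cohomology supported on $Z$. Since $X_\xi \subset X_U$, every object of $D^b_Z(X, \scrA_X)$ is annihilated by $\bar{\iota}^*_\xi$ and $\bar{\iota}^*_\xi = \bar{\iota}^*_{\xi,U} \circ j^*_U$; hence $D^b_Z(X, \scrA_X) \subset \Ker(\bar{\iota}^*_\xi)$ and, under the above localization, $\Ker(\bar{\iota}^*_\xi)/D^b_Z(X, \scrA_X)$ corresponds to $\Ker(\bar{\iota}^*_{\xi,U})$. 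The claim then follows from Verdier's identity $(\cT/\cA)/(\cB/\cA) \simeq \cT/\cB$ for thick subcategories $\cA \subset \cB \subset \cT$.

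With this invariance in hand, I would shrink $S$: we may replace $S$ by an affine open neighborhood of $\xi$, then by its regular locus, and---invoking generic smoothness, e.g. over a field of characteristic zero---by an open over which $\tilde{\pi}$ is smooth, thereby reaching the hypotheses of Corollary~\pref{cor:CGF}. Over the resulting $S = \Spec R$ the functor $\bar{\iota}^*_\xi$ is $(-) \otimes_R K$, which is exact as $K$ is flat over $R$, so $\cH^i(\bar{\iota}^*_\xi F) \cong \cH^i(F) \otimes_R K$ and therefore $\Ker(\bar{\iota}^*_\xi) = D^b_0(X, \scrA_X)$; Corollary~\pref{cor:CGF} then yields the exact $K$-linear equivalence $D^b(X, \scrA_X)/\Ker(\bar{\iota}^*_\xi) \simeq D^b(X_\xi, \bar{\iota}^*_{\xi,\circ}\scrA_X)$, and $K$-linearity of the source is inherited through it. The only non-formal point---and hence the main obstacle---is the first step: one must confirm that the twisted, respectively Azumaya, Serre localization recalled from the preceding lemma descends to bounded derived categories through \cite[Theorem 3.2]{Miy} under these hypotheses (checking that $\coh_Z(X, \scrA_X)$ and $\coh_Z(X, \alpha_X)$ are Serre subcategories of the required kind), and then match $\Ker(\bar{\iota}^*_\xi)/D^b_Z(X, \scrA_X)$ with $\Ker(\bar{\iota}^*_{\xi,U})$ precisely; granting this, the rest is a formal consequence of results already in place.
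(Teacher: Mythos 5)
Your proof is correct and follows essentially the same strategy the paper intends: use the Serre/Verdier localization \eqref{eq:Perego-Azumaya} to show the quotient $D^b(X,\scrA_X)/\Ker(\bar\iota^*_\xi)$ is unchanged under shrinking $S$, reduce (via generic smoothness in characteristic $0$, which is what \cite[Lemma D.24]{Kuz06} is used to carry to the Azumaya setting) to the case where $\tilde\pi$ is smooth and $S$ is regular affine, identify $\Ker(\bar\iota^*_\xi)$ with $D^b_0(X,\scrA_X)$ there, and invoke Corollary~\pref{cor:CGF}. The paper compresses this into a one-line citation of the proof of \cite[Theorem 6.1]{Mora}; what you spell out (in particular the Verdier third-isomorphism step and the matching of $\Ker(\bar\iota^*_\xi)/D^b_Z$ with $\Ker(\bar\iota^*_{\xi,U})$) is exactly what that proof supplies, and the point you flag as the ``main obstacle'' is already covered by the lemma preceding the theorem.
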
 
\begin{proof}
Provided 
\pref{eq:Perego-Azumaya},
Corollary
\pref{cor:CGF}
and
\cite[Lemma D.24]{Kuz06},
the proof of
\cite[Theorem 6.1]{Mora}
carries over.
\end{proof}

\begin{remark}
The above theorem is a direct consequence of
\cite[Theorem 3.2]{Miy}
and
the $K$-linear equivalence
\begin{align*}
\coh(X) / \kker(\bar{\iota}^*_\xi)
\simeq
\coh(\bar{\iota}^*_{\xi, \circ} \scrA_X)
\end{align*}
which can be obtained similarly.
In particular,
\pref{thm:esurj}
also extends to nonaffine base case for flat proper morphisms of Azumaya $\bfk$-varieties.
\end{remark}

\begin{corollary} \label{cor:compatibility}
The equivalence
$\coh(X, \scrA_X) \simeq \coh(X, \alpha_X)$
from
\pref{thm:1.3.7}
induces an exact $K$-linear equivalence
\begin{align*}
D^b(X_\xi, \bar{\iota}^*_{\xi, \circ} \scrA_X)
\simeq
D^b(X, \alpha_X) / \Ker(\bar{\iota}^*_{\xi, \circ}).
\end{align*}
\end{corollary}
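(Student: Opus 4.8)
The plan is to deduce the corollary from the theorem just proved by transporting $\Ker(\bar{\iota}^*_\xi)$ across the equivalence of \pref{thm:1.3.7}. Write $\Theta_X = (-) \otimes_{\scrA_X} \scrE_X \colon \coh(X, \scrA_X) \xrightarrow{\sim} \coh(X, \alpha_X)$ for that equivalence and $\Theta_{X_\xi}$ for its analogue on $X_\xi$; the latter is available because $\bar{\iota}^*_{\xi, \circ} \scrA_X \cong \underline{\End}(\bar{\iota}^*_{\xi, \circ} \scrE_X)$ by \cite[Lemma D.15]{Kuz06}, exactly as invoked in the proof of \pref{thm:esurj}. Since the underlying morphism of $\bar{\iota}_\xi$ is a base change of the flat morphism $\iota_\xi$ it is flat, so all the pullback functors in play are exact and $\Theta_X$, $\Theta_{X_\xi}$ pass termwise to bounded derived categories, giving exact equivalences $D^b(X, \scrA_X) \simeq D^b(X, \alpha_X)$ and $D^b(X_\xi, \bar{\iota}^*_{\xi, \circ} \scrA_X) \simeq D^b(X_\xi, \bar{\iota}^*_{\xi, \circ} \alpha_X)$.

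First I would check that the square
\begin{align*}
\begin{gathered}
\xymatrix{
D^b(X, \scrA_X) \ar[r]^-{\bar{\iota}^*_\xi} \ar[d]_-{\Theta_X} & D^b(X_\xi, \bar{\iota}^*_{\xi, \circ} \scrA_X) \ar[d]^-{\Theta_{X_\xi}} \\
D^b(X, \alpha_X) \ar[r]_-{\bar{\iota}^*_{\xi, \circ}} & D^b(X_\xi, \bar{\iota}^*_{\xi, \circ} \alpha_X)
}
\end{gathered}
\end{align*}
commutes up to natural isomorphism. On objects this is the canonical isomorphism $\bar{\iota}^*_{\xi, \circ} \scrF \otimes_{\bar{\iota}^*_{\xi, \circ} \scrA_X} \bar{\iota}^*_{\xi, \circ} \scrE_X \cong \bar{\iota}^*_{\xi, \circ}(\scrF \otimes_{\scrA_X} \scrE_X)$: strictness of $\bar{\iota}_\xi$ identifies $\bar{\iota}^*_\xi \scrF$ with $\bar{\iota}^*_{\xi, \circ} \scrF$ as a right $\bar{\iota}^*_{\xi, \circ} \scrA_X$-module, as already recorded in the proof of \pref{lem:GFeq}, and pullback is compatible with relative tensor products. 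Exactness of the functors lets this extend termwise and naturally to complexes.

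Granting the square, since $\Theta_{X_\xi}$ is an equivalence an object $F$ lies in $\Ker(\bar{\iota}^*_\xi)$ if and only if $\Theta_X(F)$ lies in $\Ker(\bar{\iota}^*_{\xi, \circ})$; hence $\Theta_X$ restricts to an equivalence $\Ker(\bar{\iota}^*_\xi) \simeq \Ker(\bar{\iota}^*_{\xi, \circ})$ of triangulated subcategories, and by the universal property of the Verdier quotient it descends to an exact equivalence
\begin{align*}
D^b(X, \scrA_X) / \Ker(\bar{\iota}^*_\xi)
\simeq
D^b(X, \alpha_X) / \Ker(\bar{\iota}^*_{\xi, \circ}),
\end{align*}
which is moreover $K$-linear (this is routine: $\Theta_X$ is $\scrO_X$-linear, and the $K$-structures on the two quotients are the ones transported from the generic-fiber derived categories; cf.\ \pref{lem:Klin} and \pref{cor:CGF}). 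Composing with the equivalence $D^b(X, \scrA_X) / \Ker(\bar{\iota}^*_\xi) \simeq D^b(X_\xi, \bar{\iota}^*_{\xi, \circ} \scrA_X)$ of the preceding theorem yields the asserted equivalence.

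The only genuine point is the commutativity of the square, i.e.\ the compatibility of the dictionary of \pref{thm:1.3.7} with pullback along the strict morphism $\bar{\iota}_\xi$; once that is in hand everything reduces to the formal behaviour of kernels and Verdier quotients under an equivalence, so I expect no further obstacle.
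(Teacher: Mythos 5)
Your proposal is correct and reaches the same intermediate equivalence $D^b(X, \scrA_X)/\Ker(\bar{\iota}^*_\xi) \simeq D^b(X, \alpha_X)/\Ker(\bar{\iota}^*_{\xi, \circ})$ that the paper proves before composing with the preceding theorem, but the descent mechanism differs. You establish a commutative square of derived functors directly (using strictness of $\bar{\iota}_\xi$ and compatibility of flat pullback with relative tensor products) and then invoke the universal property of the Verdier quotient; the paper instead drops to the abelian level, using flatness of $\bar{\iota}_\xi$ and $\bar{\iota}_{\xi, \circ}$ to identify $\Ker(\bar{\iota}^*_\xi)$ and $\Ker(\bar{\iota}^*_{\xi, \circ})$ with the subcategories of complexes whose cohomology lies in the abelian kernels $\kker(\bar{\iota}^*_\xi)$, $\kker(\bar{\iota}^*_{\xi, \circ})$, matches those Serre subcategories via a pointwise torsion criterion ($\scrF|_{\Spec R} \otimes_R K = 0$ on some affine open), and then applies Miyachi's theorem to return to the Verdier quotients. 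Your route is arguably a little cleaner in that it stays within triangulated categories and avoids the Miyachi detour, at the cost of needing a genuine natural isomorphism in the square rather than an object-level matching of kernels; the paper's route gives the explicit torsion characterization as a byproduct, which is convenient elsewhere in the article, but for the corollary as stated both arguments are sound and yield the same $K$-linear equivalence.
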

\begin{proof}
Since
$\bar{\iota}_{\xi}, \bar{\iota}_{\xi, \circ}$
are flat,
$\Ker(\bar{\iota}^*_\xi), \Ker(\bar{\iota}^*_{\xi, \circ})$
respectively coincide with the full triangulated subcategories
\begin{align*}
D^b_{\kker(\bar{\iota}^*_\xi)}(X, \scrA_X)
\subset
D^b(X, \scrA_X), \
D^b_{\kker(\bar{\iota}^*_{\xi, \circ})}(X, \alpha_X)
\subset
D^b(X, \alpha_X)
\end{align*}
spanned by complexes with cohomology in the Serre subcategories
\begin{align*}
\kker(\bar{\iota}^*_\xi)
\subset
\coh(X, \scrA_X), \
\kker(\bar{\iota}^*_{\xi, \circ})
\subset
\coh(X, \alpha_X).
\end{align*}
Here,
we use the symbol
$\kker$
to denote the kernel of exact functors of abelian categories.
Then by
\cite[Theorem 3.2]{Miy}
it suffices to show that
the equivalence sends
$\kker(\bar{\iota}^*_\xi)$
to
$\kker(\bar{\iota}^*_{\xi, \circ})$.
Any
$\scrF \in \coh(X, \scrA_X)$
belongs to
$\kker(\bar{\iota}^*_\xi)$
if and only if
there is an affine open subset
$\Spec R \subset S$
such that
$\scrF |_{\Spec R} \otimes_R K = 0$.
This is also a
necessary
and
sufficient
condition for its image
$\scrF \otimes_{\scrA_X} \scrE_X \in \coh(X, \alpha_X)$
under the equivalence to belong to
$\kker(\bar{\iota}^*_{\xi, \circ})$.
\end{proof}

\begin{remark}
As expected,
it immediately follows
\begin{align*}
D^b(X, \alpha_X) / \Ker(\bar{\iota}^*_{\xi, \circ})
\simeq
D^b(X_\xi, \bar{\iota}^*_{\xi, \circ} \alpha_X).
\end{align*}
For this reason,
we will also call the pair
$(X_\xi, \bar{\iota}^*_{\xi, \circ} \alpha_X)$
the
\emph{generic fiber}
of
$\tilde{\pi}$,
although it is not a locally ringed space.
\end{remark}

\subsection{Removal of torsion parts}
Let
$\tilde{\pi} \colon (X, \scrA_X) \to (S, \scrA_S), 
\tilde{\pi}^\prime \colon (Y, \scrA_Y) \to (S, \scrA_S)$
be flat proper morphisms of Azumaya $\bfk$-varieties.
Assume that
$\tilde{\pi}, \tilde{\pi}^\prime$
are smooth
and
$S$
is a regular affine $\bfk$-variety
$\Spec R$.
Assume further that
their generic fibers
$(X_\xi, \bar{\iota}^*_{\xi, \circ} \scrA_X),
(Y_\xi, \bar{\iota}^{\prime *}_{\xi, \circ} \scrA_Y)$
are derived-equivalent,
i.e.,
there is an exact $K$-linear equivalence
\begin{align*}
\Phi
\colon 
D^b(X_\xi, \bar{\iota}^*_{\xi, \circ} \scrA_X)
\to
D^b(Y_\xi, \bar{\iota}^{\prime *}_{\xi, \circ} \scrA_Y).
\end{align*}
We use the same symbol
$\Phi$
to denote the induced exact $K$-linear equivalence
\begin{align*}
\Phi
\colon 
D^b(X_\xi, \bar{\iota}^*_{\xi, \circ} \alpha_X)
\to
D^b(Y_\xi, \bar{\iota}^{\prime *}_{\xi, \circ} \alpha_Y).
\end{align*}
By
\pref{thm:main1}
there exists an object
\begin{align*}
P_K
\in
D^b(X_\xi \times_K Y_\xi, \bar{\iota}^*_{\xi, \circ} \alpha^{-1}_X \boxtimes \bar{\iota}^{\prime *}_{\xi, \circ} \alpha_Y).
\end{align*}
unique up to isomorphism
such that
the associated Fourier--Mukai transform
$\Phi_{P_K}$
is isomorphic to
$\Phi$.
Due to Corollary
\pref{cor:compatibility},
one can always lift it to some
$P
\in
D^b(X \times_R Y, \alpha^{-1}_X \boxtimes \alpha_Y)$
along the projection to the Verdier quotient.
From
\cite[Corollary 2.3.9, 2.4.3]{Cal}
it follows that
the relative integral functor
\begin{align*}
\Phi_P
\colon
D^b(X, \alpha_X)
\to
D^b(Y, \alpha_Y)
\end{align*}
admits
left
and
right
adjoints
$\Phi^L_P, \Phi^R_P$.
In the sequel,
we fix a strong generator
$E$
of
$D^b(X, \alpha_X)$,
which exists under our assumptions by 
\cite[Proposition 25]{Per}.

\begin{lemma}[cf. {\cite[Lemma 3.4.1]{BV}}] \label{lem:3.4.1}
The pullback
$E_K = \bar{\iota}^*_{\xi, \circ} E$
is a strong generator of
$D^b(X_\xi, \bar{\iota}^*_{\xi, \circ} \alpha_X)$.
\end{lemma}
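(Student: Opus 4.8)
The plan is to identify the pullback $\bar{\iota}^*_{\xi, \circ}$ with a Verdier localization and then to invoke the soft fact that passing to a Verdier quotient cannot increase the generation time, hence cannot destroy strong generation. First I would note that, by the construction preceding Corollary~\pref{cor:compatibility} (i.e. Corollary~\pref{cor:CGF} together with \cite[Theorem 3.2]{Miy} in its $\alpha_X$-twisted incarnation, exactly as used there), the exact $K$-linear functor $\bar{\iota}^*_{\xi, \circ} \colon D^b(X, \alpha_X) \to D^b(X_\xi, \bar{\iota}^*_{\xi, \circ} \alpha_X)$ is, up to the equivalence of Corollary~\pref{cor:compatibility}, the canonical projection $Q \colon D^b(X, \alpha_X) \to D^b(X, \alpha_X)/\Ker(\bar{\iota}^*_{\xi, \circ})$ onto the Verdier quotient. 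This is where essentially all the content lies, and it is already supplied by the excerpt; in particular $Q$ is the identity on objects, hence bijective on objects, and it is an exact additive functor carrying $E$ to an object isomorphic to $E_K$.

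Next I would record the elementary observation that an exact additive functor $F$ commutes with every operation used to build Rouquier's $n$-fold extension closure $\langle - \rangle_n$: it preserves shifts and finite direct sums, it sends the cone of a morphism to the cone of its image, and it sends a direct summand to a direct summand. An induction on $n$ then yields $F(\langle E \rangle_n) \subseteq \langle F(E) \rangle_n$ for every object $E$; applied to $F = Q$ this gives $Q(\langle E \rangle_n) \subseteq \langle E_K \rangle_n$.

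To conclude, the object $E$ fixed in the paragraph preceding the lemma is a strong generator of $D^b(X, \alpha_X)$ by \cite[Proposition 25]{Per}, so $\langle E \rangle_n = D^b(X, \alpha_X)$ for some $n \in \bN$. Applying $Q$ and using that $Q$ is surjective on objects, every object of $D^b(X, \alpha_X)/\Ker(\bar{\iota}^*_{\xi, \circ})$ lies in $\langle E_K \rangle_n$; transporting this along the equivalence of Corollary~\pref{cor:compatibility} gives $\langle E_K \rangle_n = D^b(X_\xi, \bar{\iota}^*_{\xi, \circ} \alpha_X)$, i.e. $E_K$ is a strong generator. The only genuine obstacle is the bookkeeping in the first step — checking that the twisted derived pullback to the generic fiber really is the Verdier quotient functor — and this is precisely Corollary~\pref{cor:compatibility}; the remaining two steps are purely formal, the whole statement being the twisted analogue of \cite[Lemma 3.4.1]{BV}.
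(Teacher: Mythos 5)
Your argument is correct, and it is essentially the standard mechanism behind \cite[Lemma 3.4.1]{BV}, which the paper cites: identify the twisted derived pullback to the generic fiber with the Verdier projection (which is supplied by Corollary~\pref{cor:compatibility} together with the preceding theorem), observe that an exact additive functor does not increase Rouquier generation time, and use that the projection is surjective on objects. The one place where you should be a bit more careful in the write-up is the last step: $\langle E_K \rangle_n$ in the statement is taken inside $D^b(X_\xi, \bar{\iota}^*_{\xi,\circ}\alpha_X)$, so strictly you first obtain $\langle Q(E) \rangle_n = D^b(X,\alpha_X)/\Ker(\bar{\iota}^*_{\xi,\circ})$ inside the Verdier quotient and then transport this along the exact equivalence of Corollary~\pref{cor:compatibility}, which carries $Q(E)$ to $E_K$; as stated the phrase ``lies in $\langle E_K \rangle_n$'' briefly conflates the two categories. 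The paper's own proof is terser and routes part of the argument through a compactness observation (via \cite[Theorem 17, Proposition 25]{Per}), which is the form the statement takes in \cite{BV} (where one works with compact generators of the unbounded category); your version bypasses this entirely by staying in $D^b(\coh)$ and leaning directly on the Verdier quotient description already established in the paper, which is arguably cleaner in this setting and equally valid.
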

\begin{proof}
The fact that
$E_K$
is compact follows from
\cite[Theorem 17, Proposition 25]{Per}.
For the rest,
the argument in
\cite[Lemma 2.5]{CS07}
works without modification.
\end{proof}

\begin{lemma}[cf. {\cite[Lemma 6.5]{Mora}}] \label{lem:counit2}
There exists a nonempty affine open subset
$U \subset \Spec R$
over which the restriction
\begin{align*}
\Phi_U
=
\Phi_{P_U}
\colon
D^b(X_U, \alpha_{X_U})
\to
D^b(Y_U, \alpha_{Y_U})
\end{align*}
induces bijections
\begin{align} \label{eq:counit}
\begin{gathered}
\Hom_{D^b(X_U, \alpha_{X_U})}(E_U, E_U)
\to
\Hom_{D^b(Y_U, \alpha_{Y_U})}(\Phi_U(E_U), \Phi_U(E_U)), \\
\Hom_{D^b(X_U, \alpha_{X_U})}(E_U, \Phi^L_U \Phi_U(E_U))
\to
\Hom_{D^b(Y_U, \alpha_{Y_U})}(\Phi_U(E_U), \Phi_U \Phi^L_U \Phi_U(E_U)).
\end{gathered}
\end{align}
\end{lemma}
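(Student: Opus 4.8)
The plan is to realise each of the two maps in \pref{eq:counit}, for a variable affine open $U = \Spec R_U \subseteq S = \Spec R$, as the base change along $R \to R_U$ of one $R$-linear map between finitely generated $R$-modules; to observe that over the generic point $\Spec K$ this base change is bijective because $\Phi$ is an equivalence; and then to invert a single element of $R$.

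First I would record a base change statement for Hom-groups. Fix $F, G \in D^b(X, \alpha_X)$. Since $X$ is smooth, $F$ and $G$ are perfect, and $R\underline{\Hom}_{\scrO_X}(F, G)$ is an ordinary (untwisted) perfect complex on $X$ because $F$ and $G$ carry the same twist $\alpha_X$ --- the same observation that underlies \pref{lem:homdim}. As $\pi_\circ \colon X \to S$ is proper and $S$ is Noetherian affine, $R\pi_{\circ *}R\underline{\Hom}_{\scrO_X}(F, G)$ lies in $D^b_{\coh}(S)$, so
\begin{align*}
\Hom_{D^b(X, \alpha_X)}(F, G) \cong H^0\!\bigl(R\pi_{\circ *}R\underline{\Hom}_{\scrO_X}(F, G)\bigr)
\end{align*}
is a finitely generated $R$-module, naturally in $F$ and $G$. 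For an affine open $U = \Spec R_U \subseteq S$, the scheme $X_U$ is open in $X$, formation of $R\underline{\Hom}$ of perfect complexes commutes with this restriction, and flatness of $U \hookrightarrow S$ gives $R\pi_{U *}\bigl((-)|_{X_U}\bigr) \cong (R\pi_{\circ *}-)|_U$; hence
\begin{align*}
\Hom_{D^b(X_U, \alpha_{X_U})}(F_U, G_U) \cong \Hom_{D^b(X, \alpha_X)}(F, G) \otimes_R R_U
\end{align*}
naturally in $F, G$, and the same with $R_U$ replaced by $K$, which recovers $\Hom_{D^b(X_\xi, \alpha_X)}(F_K, G_K)$. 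The identical discussion applies on $Y$.

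Now I would apply this with $(F, G)$ running over $(E, E)$ and $(E, \Phi^L_P\Phi_P(E))$ on $X$ and over $(\Phi_P(E), \Phi_P(E))$ and $(\Phi_P(E), \Phi_P\Phi^L_P\Phi_P(E))$ on $Y$. The relative integral functor $\Phi_P$ is $R$-linear and restricts over any affine open $U$ to $\Phi_{P_U}$, the relative integral functor with kernel $P_U = P|_{X_U \times_U Y_U}$; its left adjoint $\Phi^L_P$ is again a relative integral functor by \cite[Corollary 2.3.9, 2.4.3]{Cal}, and restricts to $\Phi^L_{P_U}$ as well. Consequently the $R$-linear maps
\begin{align*}
\phi_1 &\colon \Hom_{D^b(X, \alpha_X)}(E, E) \to \Hom_{D^b(Y, \alpha_Y)}(\Phi_P(E), \Phi_P(E)), \\
\phi_2 &\colon \Hom_{D^b(X, \alpha_X)}(E, \Phi^L_P\Phi_P(E)) \to \Hom_{D^b(Y, \alpha_Y)}(\Phi_P(E), \Phi_P\Phi^L_P\Phi_P(E))
\end{align*}
induced by $\Phi_P$ satisfy: $\phi_i \otimes_R R_U$ is the corresponding map of \pref{eq:counit} for $U$, and $\phi_i \otimes_R K$ is the corresponding map for the generic fibre. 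Since $P_U|_{X_\xi \times_K Y_\xi} \cong P_K$ and $\Phi_{P_K} \cong \Phi$ by \pref{thm:main1}, the latter map is induced by the equivalence $\Phi$; being fully faithful, $\Phi$ makes $\Hom(E_K, E_K) \to \Hom(\Phi(E_K), \Phi(E_K))$ and $\Hom(E_K, \Phi^L\Phi(E_K)) \to \Hom(\Phi(E_K), \Phi\Phi^L\Phi(E_K))$ bijective. Hence $\phi_1 \otimes_R K$ and $\phi_2 \otimes_R K$ are bijective, so $\Ker \phi_i$ and $\Coker \phi_i$ are finitely generated $R$-modules vanishing after $\otimes_R K$, each annihilated by some nonzero $r_i \in R$; then $U = \Spec R[(r_1 r_2)^{-1}]$ is a nonempty affine open on which both maps of \pref{eq:counit} are bijective. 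The step I expect to be the main obstacle is precisely the compatibility of the \emph{left adjoint} $\Phi^L_P$ with the base changes $X_U \to X$ and $X_\xi \to X$: for $\Phi_P$ itself this is formal, but for $\Phi^L_P$ one has to pass through its explicit Serre-twisted kernel in order to be sure that $\phi_2 \otimes_R K$ is genuinely the map induced by the equivalence $\Phi$ on the generic fibres.
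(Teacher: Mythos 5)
Your proposal is correct and takes essentially the same approach as the paper. The paper's proof is terse — it cites \cite[Lemma 6.5]{Mora} directly, observing only that the twisted $\Hom$-space $\Hom_{D^b(X_U, \alpha_{X_U})}(E_U, E_U) \cong \Hom_{X_U}(\scrO_{X_U}, E^\vee_U \otimes E_U)$ can be read as an untwisted $\Hom$-space so that coherence and base change follow from the untwisted theory, plus a preliminary step (taken from \cite{Mora}) of shrinking $S$ so that $E_U$ is torsion-free (legitimate because $E_K \neq 0$ by \pref{lem:3.4.1}) — but what you have written is precisely the content of that argument spelled out: perfection of $R\underline{\Hom}$ and proper pushforward give finitely generated $R$-modules, flat base change gives compatibility with $\otimes_R R_U$ and $\otimes_R K$, full faithfulness of $\Phi$ gives bijectivity over $K$, and localizing once kills the finitely generated torsion kernels and cokernels. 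The compatibility of $\Phi^L_P$ with base change, which you flag as the likely sticking point, is indeed handled in the paper by appealing to the explicit (Serre-twisted) kernel of the adjoint via \cite[Corollary 2.3.9, 2.4.3]{Cal}, exactly as you anticipate.
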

\begin{proof}
By
\pref{lem:3.4.1}
the pullback
$E_K$
can not be trivial.
The same argument as in
\cite[Lemma 6.5]{Mora}
shows that
there exists a nonempty open subset
$U \subset \Spec R$
over which the restriction
$E_U$
is $\scrO_U(U)$-torsion free. 
Since we have
\begin{align*}
\Hom_{D^b(X_U, \alpha_{X_U})}(E_U, E_U)
\cong
\Hom_{X_U}(\scrO_{X_U}, E^\vee_U \otimes_{\scrO_{X_U}} E_U),
\end{align*}
by
\cite[Lemma 6.6]{Mora}
the $\scrO_U(U)$-module
$\Hom_{D^b(X_U, \alpha_{X_U})}(E_U, E_U)$
is coherent.
For the rest,
the argument in
\cite[Lemma 6.5]{Mora}
works without modification.
\end{proof}

Similarly,
one obtains the dual statement.

\begin{lemma}[cf. {\cite[Lemma 6.7]{Mora}}] \label{lem:unit2}
There exists a nonempty affine open subset
$U \subset \Spec R$
over which the restriction
\begin{align*}
\Phi^L_U
=
\Phi^L_{P_U}
\colon
D^b(Y_U, \alpha_{Y_U})
\to
D^b(X_U, \alpha_{X_U})
\end{align*}
induces bijections
\begin{align} \label{eq:unit}
\begin{gathered}
\Hom_{D^b(Y_U, \alpha_{Y_U})}(E^\prime_U, E^\prime_U)
\to
\Hom_{D^b(X_U, \alpha_{X_U})}(\Phi^L_U(E^\prime_U), \Phi^L_U(E^\prime_U)), \\
\Hom_{D^b(Y_U, \alpha_{Y_U})}(\Phi_U \Phi^L_U (E^\prime_U), E^\prime_U)
\to
\Hom_{D^b(X_U, \alpha_{X_U})}(\Phi^L_U \Phi_U \Phi^L_U (E^\prime_U), \Phi^L_U (E^\prime_U)).
\end{gathered}
\end{align}
\end{lemma}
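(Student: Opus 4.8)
The plan is to repeat the proof of \pref{lem:counit2} with the roles of $X$ and $Y$, and of $\Phi_P$ and $\Phi^L_P$, interchanged. This is legitimate because by \cite[Corollary 2.3.9, 2.4.3]{Cal} the left adjoint $\Phi^L_P$ is again a relative integral functor preserving bounded coherent objects, so every finiteness statement used for $\Phi_P$ in \pref{lem:counit2} has a verbatim counterpart for $\Phi^L_P$. Fix a strong generator $E^\prime$ of $D^b(Y, \alpha_Y)$, which exists by \cite[Proposition 25]{Per}; by the evident analogue of \pref{lem:3.4.1}, its pullback $E^\prime_K = \bar{\iota}^{\prime *}_{\xi, \circ} E^\prime$ is a strong generator of $D^b(Y_\xi, \bar{\iota}^{\prime *}_{\xi, \circ} \alpha_Y)$, hence nonzero. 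As in the first step of \pref{lem:counit2}, I would first shrink $\Spec R$ to a nonempty affine open $U$ over which $E^\prime_U$ is $\scrO_U(U)$-torsion free.

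Next I would show that for both maps in \eqref{eq:unit} the source and the target are coherent $\scrO_U(U)$-modules. Each of the objects $E^\prime_U$, $\Phi^L_U E^\prime_U$, $\Phi_U \Phi^L_U E^\prime_U$ and $\Phi^L_U \Phi_U \Phi^L_U E^\prime_U$ lies in a bounded derived category of coherent twisted sheaves, since $\Phi^L_P$ and $\Phi_P$ preserve such categories by \cite[Corollary 2.3.9, 2.4.3]{Cal}. Using the equivalences of \pref{thm:1.3.7} to cancel the Brauer twists, each $\Hom$ space in \eqref{eq:unit} is rewritten as $\Hom_{X_U}(\scrO_{X_U}, R\underline{\Hom}(-, -))$ (or the corresponding expression over $Y_U$) with $R\underline{\Hom}(-, -)$ an honest coherent complex; pushing it forward along the proper morphism $X_U \to U$ (resp.\ $Y_U \to U$) and using that $U$ is affine, one obtains a finitely generated $\scrO_U(U)$-module, which is the content of \cite[Lemma 6.6]{Mora} as invoked in \pref{lem:counit2}. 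The torsion-freeness of $E^\prime_U$ ensures that forming these modules is compatible with restriction to the generic point, i.e.\ $(\Phi^L_U E^\prime_U)_K \cong \Phi^L_{P_K}(E^\prime_K)$ and likewise for the composites, and $\Hom(A, B) \otimes_{\scrO_U(U)} K \cong \Hom(A_K, B_K)$ for the relevant $A, B$.

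Over the generic point $\Phi_{P_K} \cong \Phi$ is an equivalence by \pref{thm:main1}, so its left adjoint $\Phi^L_{P_K} \cong \Phi^L$ is a quasi-inverse equivalence; in particular both maps of \eqref{eq:unit} become bijective after $(-) \otimes_{\scrO_U(U)} K$. Each map is then a morphism of coherent $\scrO_U(U)$-modules which is an isomorphism at the generic point of the integral affine variety $U$, so its kernel and cokernel are torsion coherent modules supported on proper closed subsets; deleting these finitely many loci yields a nonempty affine open subset of $\Spec R$ over which both maps are bijections. The step I expect to be the main obstacle is the coherence and base-change compatibility of the iterated twisted relative integral functors $\Phi^L_U$, $\Phi_U \Phi^L_U$ and $\Phi^L_U \Phi_U \Phi^L_U$ applied to $E^\prime_U$ --- that is, setting up the twisted, relative version of \cite[Lemma 6.6]{Mora} while keeping track of the Brauer twists through $\Phi^L_P$ as in the proof of \pref{thm:main1}; granting this, the remainder is the standard spreading-out argument already used for \pref{lem:counit2}.
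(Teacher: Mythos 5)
Your proposal is correct and takes essentially the same approach as the paper, which dispenses with a separate proof by remarking ``Similarly, one obtains the dual statement'' just before the lemma: you spell out that ``similarly'' means repeating the proof of \pref{lem:counit2} with $X$, $Y$ and $\Phi_{P}$, $\Phi^L_{P}$ interchanged, invoking the evident $Y$-side analogue of \pref{lem:3.4.1} for a strong generator $E'$ of $D^b(Y, \alpha_Y)$ and the same coherence, torsion-freeness and spreading-out arguments from \cite[Lemmas 6.5, 6.6]{Mora}. The ``main obstacle'' you flag (coherence and base-change compatibility of the iterated twisted integral functors) is exactly the content the paper also delegates to \cite{Mora} and \cite[Corollary 2.3.9, 2.4.3]{Cal}, so your account agrees with the intended argument.
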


\subsection{Specialization}
\begin{theorem}[cf. {\cite[Theorem 6.8]{Mora}}] \label{thm:main}
There exists a nonempty affine open subset
$U \subset \Spec R$
over which
$\Phi_U$
becomes an $\scrO_U(U)$-linear exact equivalence.
In particular,
over any closed point
$s \in U$
the closed fibers
$(X_s, \bar{\iota}^*_{s, \circ} \scrA_X),
(Y_s, \bar{\iota}^{\prime *}_{s, \circ} \scrA_Y))$
are derived-equivalent.
\end{theorem}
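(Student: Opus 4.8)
The plan is to adapt the argument of \cite[Theorem 6.8]{Mora} to the twisted setting, the Lemmas \pref{lem:3.4.1}, \pref{lem:counit2} and \pref{lem:unit2} providing exactly the required inputs. First I would choose a nonempty affine open $U = \Spec R' \subset \Spec R$ that works simultaneously for \pref{lem:counit2} and \pref{lem:unit2}, and abbreviate $\Phi_U = \Phi_{P_U}$, $\Phi^L_U = \Phi^L_{P_U}$; being relative twisted integral functors, these are $R'$-linear exact functors and $\Phi^L_U$ is left adjoint to $\Phi_U$, with unit $u \colon \id \to \Phi_U\Phi^L_U$ and counit $c \colon \Phi^L_U\Phi_U \to \id$. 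Since $E$ is a strong generator of $D^b(X, \alpha_X)$ and the restriction functor to $D^b(X_U, \alpha_{X_U})$ is a Verdier localization (combine \cite[Proposition 4.3]{Per08} with \cite[Theorem 3.2]{Miy}), the object $E_U$ classically generates $D^b(X_U, \alpha_{X_U})$; likewise $E'_U$ classically generates $D^b(Y_U, \alpha_{Y_U})$, where $E'$ is a strong generator of $D^b(Y, \alpha_Y)$ fixed as $E$ was (\cite[Proposition 25]{Per}). Because the full subcategory of objects on which a natural transformation between exact functors is an isomorphism is thick, it then suffices to prove that $c_{E_U}$ is an isomorphism and $u_{E'_U}$ is an isomorphism: this makes $\Phi_U$ fully faithful with fully faithful left adjoint, hence an $R'$-linear exact equivalence, which is the first assertion.

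The heart of the matter is to show $c_{E_U} \colon \Phi^L_U\Phi_U E_U \to E_U$ is an isomorphism. By the triangle identities and naturality of $c$, the second bijection of \pref{lem:counit2} (the one induced by $\Phi_U$ on $\Hom(E_U, \Phi^L_U\Phi_U E_U)$) translates into bijectivity of $(-)\circ c_{E_U} \colon \Hom(E_U, \Phi^L_U\Phi_U E_U) \to \Hom(\Phi^L_U\Phi_U E_U, \Phi^L_U\Phi_U E_U)$, so $c_{E_U}$ admits a left inverse; as $D^b(X_U, \alpha_{X_U})$ is idempotent complete, $E_U \cong \Phi^L_U\Phi_U E_U \oplus \Cone(c_{E_U})$ with $c_{E_U}$ the summand inclusion. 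Likewise the first bijection of \pref{lem:counit2} gives bijectivity of $(-)\circ c_{E_U} \colon \Hom(E_U, E_U) \to \Hom(\Phi^L_U\Phi_U E_U, E_U)$, and reading both sides off the above direct sum decomposition, injectivity forces $\Hom(\Cone(c_{E_U}), \Cone(c_{E_U})) = 0$, whence $\Cone(c_{E_U}) = 0$. Dually, \pref{lem:unit2} makes $u_{E'_U}$ a split epimorphism realizing $\Phi_U\Phi^L_U E'_U$ as a summand of $E'_U$ and then forces the complementary summand to vanish, so $u_{E'_U}$ is an isomorphism.

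For the ``in particular'', I would base change along a closed point $s \in U$: the restriction $\Phi_s = \Phi_{P_s} \colon D^b(X_s, \alpha_{X_s}) \to D^b(Y_s, \alpha_{Y_s})$ is an (absolute) twisted Fourier--Mukai transform, and since the unit and counit of $\Phi^L_U \dashv \Phi_U$ restrict to those of $\Phi^L_{P_s} \dashv \Phi_{P_s}$, it is an equivalence. By \cite[Lemma D.15]{Kuz06} one has $\bar{\iota}^*_{s, \circ}\scrA_X \cong \bar{\iota}^*_{s, \circ}\scrE_X \otimes_{\scrO_{X_s}} \bar{\iota}^*_{s, \circ}\scrE^\vee_X$ and similarly for $Y$, so \pref{thm:1.3.7} yields equivalences $D^b(X_s, \bar{\iota}^*_{s, \circ}\scrA_X) \simeq D^b(X_s, \alpha_{X_s})$ and $D^b(Y_s, \bar{\iota}^{\prime *}_{s, \circ}\scrA_Y) \simeq D^b(Y_s, \alpha_{Y_s})$; composing with $\Phi_s$ gives the asserted derived equivalence of the closed fibers.

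The step I expect to be the main obstacle is the second paragraph --- equivalently, the proofs of \pref{lem:counit2} and \pref{lem:unit2} on which it rests: one must propagate the generic-fiber equivalence $\Phi_{P_K}$ to a neighborhood of $\xi$, controlling the left adjoint of the relative twisted integral functor under restriction (via \cite[Corollary 2.3.9, 2.4.3]{Cal}) and the coherence of the $R'$-modules $\Hom(E_U, E_U)$, $\Hom(E_U, \Phi^L_U\Phi_U E_U)$ and their $Y$-counterparts, so that the comparison maps, bijective generically, become bijective after inverting a single element of $R$.
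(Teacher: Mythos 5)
Your proposal is correct and is essentially a spelled-out version of the argument the paper defers to: the paper's proof of \pref{thm:main} is simply ``given \pref{lem:3.4.1}, \pref{lem:counit2} and \pref{lem:unit2}, the proof of \cite[Theorem 6.8]{Mora} carries over,'' and your plan reproduces exactly that argument — using the two bijections to make $c_{E_U}$ a split monomorphism and then kill its cone, dually for $u_{E'_U}$, and invoking that $E_U$ and $E'_U$ classically generate so the unit and counit are isomorphisms on all objects. The ``in particular'' step is also in line with the paper, which for the closed-fiber passage (in \pref{cor:specialization}) points to a twisted adaptation of \cite[Proposition 2.15]{HLS} — your appeal to base-change compatibility of the unit and counit of the relative integral functor adjunction is the same ingredient.
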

\begin{proof}
Provided
\pref{lem:3.4.1},
the bijections
\pref{eq:counit}
from
\pref{lem:counit2}
and
that
\pref{eq:unit}
from
\pref{lem:unit2},
the proof of
\cite[Theorem 6.8]{Mora}
carries over.
\end{proof}

\begin{corollary}[cf. {\cite[Corollary 6.9]{Mora}}] \label{cor:specialization}
Let
$\tilde{\pi} \colon (X, \scrA_X) \to (S, \scrA_S), 
\tilde{\pi}^\prime \colon (Y, \scrA_Y) \to (S, \scrA_S)$
be flat proper morphisms of Azumaya $\bfk$-varieties.
Assume that
their generic fibers are derived-equivalent.
Then there exists a nonempty open subset
$U \subset S$
to which the base changes
$(X_U, \bar{\iota}^*_{U, \circ} \scrA_X),
(Y_U, \bar{\iota}^{\prime *}_{U, \circ} \scrA_Y)$
become $U$-linear derived-equivalent.
In particular,
over any closed point
$s \in U$
the closed fibers
$(X_s, \bar{\iota}^*_{s, \circ} \scrA_X),
(Y_s, \bar{\iota}^{\prime *}_{s, \circ} \scrA_Y)$
are derived-equivalent.
\end{corollary}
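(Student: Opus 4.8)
The plan is to deduce \pref{cor:specialization} from \pref{thm:main} by a sequence of harmless shrinkings of $S$, and then to transport the resulting statement from twisted sheaves back to modules over the Azumaya algebras by means of \pref{thm:1.3.7}. The guiding observation is that every shrinking I perform keeps the generic point $\xi\in S$, so the generic fibers $(X_\xi,\bar\iota^*_{\xi,\circ}\scrA_X)$ and $(Y_\xi,\bar\iota'^*_{\xi,\circ}\scrA_Y)$, and the fixed equivalence $\Phi$ between their derived categories, are unchanged; the hypothesis therefore persists throughout.

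First I would replace $S$ by an affine open subscheme of its regular locus. The regular locus of a $\bfk$-variety is open and dense, and it contains $\xi$ because $\scrO_{S,\xi}=K$ is a field; so after shrinking we may assume $S=\Spec R$ with $R$ a regular domain. Next I would shrink once more so that both $\tilde\pi$ and $\tilde\pi'$ become smooth: by generic smoothness over the characteristic-zero field $\bfk$ (equivalently, by generic flatness together with the fact that the smooth loci of $\tilde\pi_\circ$ and $\tilde\pi'_\circ$ meet the generic fibers, which are smooth proper $K$-varieties), the loci where $\tilde\pi_\circ$ and $\tilde\pi'_\circ$ are smooth are open and contain $\xi$. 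After these two reductions we are exactly in the setting of the subsection ``Removal of torsion parts'' and of \pref{thm:main}.

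Now I would invoke \pref{thm:main} for the given equivalence $\Phi\colon D^b(X_\xi,\bar\iota^*_{\xi,\circ}\alpha_X)\to D^b(Y_\xi,\bar\iota'^*_{\xi,\circ}\alpha_Y)$: it yields a nonempty affine open $U\subset\Spec R$ over which the relative twisted Fourier--Mukai transform $\Phi_U=\Phi_{P_U}$ is an $\scrO_U(U)$-linear exact equivalence $D^b(X_U,\alpha_{X_U})\to D^b(Y_U,\alpha_{Y_U})$, and which specializes to a derived equivalence of closed fibers over every closed point $s\in U$. Applying the equivalences $D^b(-,\scrA)\simeq D^b(-,\alpha)$ of \pref{thm:1.3.7} on $X_U$ and $Y_U$ (and, for the last assertion, on $X_s$ and $Y_s$) — these are $\scrO$-linear and compatible with the base-change and tensor functors involved — transports $\Phi_U$ to an $\scrO_U(U)$-linear, hence $U$-linear, exact equivalence $D^b(X_U,\bar\iota^*_{U,\circ}\scrA_X)\simeq D^b(Y_U,\bar\iota'^*_{U,\circ}\scrA_Y)$, and its specialization to the asserted equivalence $D^b(X_s,\bar\iota^*_{s,\circ}\scrA_X)\simeq D^b(Y_s,\bar\iota'^*_{s,\circ}\scrA_Y)$. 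Since $U$ is (affine) open in the original $S$, this is the conclusion.

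The substantial content — producing an $\scrO_U(U)$-linear equivalence from a bare equivalence of generic fibers by lifting the Fourier--Mukai kernel $P_K$ along the Verdier quotient (Corollary \pref{cor:compatibility}), controlling the torsion parts, and running the generator arguments of \pref{lem:counit2} and \pref{lem:unit2} — is entirely absorbed into \pref{thm:main}. For the corollary itself the only points needing care are bookkeeping: checking that the reduction to an affine regular base with smooth structure morphisms neither changes the generic fibers nor destroys the hypothesis, and that $\scrO$-linearity together with the specialization to closed fibers survives the translation of \pref{thm:1.3.7}. I expect the mild obstacle to be the generic-smoothness step, which is where characteristic zero enters; it is unnecessary whenever the generic fibers are already known to be smooth proper.
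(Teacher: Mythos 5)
Your proposal is correct and matches the paper's approach, which is simply to observe that the proof of \cite[Corollary 6.9]{Mora} carries over: shrink $S$ to an affine regular open around the generic point with both structure morphisms smooth (possible in characteristic zero without changing the generic fibers or the given equivalence), apply \pref{thm:main}, and transport the resulting $\scrO_U(U)$-linear equivalence back across \pref{thm:1.3.7}. The only thing the paper flags that you do not address is the adaptation of \cite[Proposition 2.15]{HLS} to the twisted case in the final step, but the paper itself notes that step is not strictly necessary, and you sidestep it by reading the fiberwise assertion directly off \pref{thm:main}, which is consistent with that caveat.
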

\begin{proof}
The proof of
\cite[Corollary 6.9]{Mora}
carries over.
Note that
in its final step,
although it is not strictly necessary,
one has to adapt
\cite[Proposition 2.15]{HLS}
to twisted case in a straightforward way.
\end{proof}



\begin{thebibliography}{99999}
\bibitem[BV03]{BV}
A. Bondal and M. Van den Bergh,
\emph{Generators and representability of functors in commutative and noncommutative geometry},
Mosc. Math. J. 3(1), 1-36 (2003).

\bibitem[C\u{a}l00]{Cal}
A. C\u{a}ld\u{a}raru,
\emph{Derived categories of twisted sheaves on Calabi--Yau manifolds},
Ph.D. thesis, Cornell University (2000). 

\bibitem[C\u{a}l02]{Cal02}
A. C\u{a}ld\u{a}raru,
\emph{Derived categories of twisted sheaves on elliptic threefolds},
J. Reine Angew. Math. 544, 161-179 (2002).

\bibitem[Coh]{Coh}
L. Cohn,
\emph{Differential graded categories are k-linear stable infinity categories},
arXiv:1308.2587

\bibitem[CG15]{CG}
J. Calabrese and M. Groechenig,
\emph{Moduli problems in abelian categories and the reconstruction theorem},
Algebr. Geom. 2(1), 1–18 (2015).

\bibitem[CS07]{CS07}
A. Canonaco and P. Stellari,
\emph{Twisted Fourier–Mukai functors},
Adv. Math. 212(2), 484-503 (2007).

\bibitem[HLS09]{HLS}
D Hern\'andez Ruip\'erez, A.C. L\'opez Mart\'in, and D. S\'anchez G\'omez
\emph{Relative integral functors for singular fibrations and singular partners},
J. Eur. Math. Soc. 11, 597-625 (2009).


\bibitem[HMS11]{HMS11}
D. Huybrechts, E. Macr\`i, and P. Stellari,
\emph{Formal deformations and their categorical general fibre},
Comment. Math. Helv. 86(1), 41-71 (2011).

\bibitem[Huy06]{Huy}
D. Huybrechts,
\emph{Fourier--Mukai transforms in algebraic geometry}, 
Oxford Science Publications, (2006)
ISBN: 978-0-19-929686-6.

\bibitem[KP03]{KP}
A. Kapustin and D. Orlov,
\emph{Vertex algebras, mirror symmetry, and D-branes: the case of complex tori},
Comm. Math. Phys. 233(1), 79–136 (2003).

\bibitem[Kaw04]{Kaw}
Y. Kawamata,
\emph{Equivalences of derived categories on sheaves on smooth stacks},
Am. J. Math. 126, 1057-1083 (2004).

\bibitem[Kuz06]{Kuz06}
A. Kuznetsov,
\emph{Hyperplane sections and derived categories}, 
Izv. Math. 70(3), 447–547 (2006).

\bibitem[Kuz07]{Kuz07}
A. Kuznetsov,
\emph{Homological projective duality}, 
Publ. math. IHES 105, 157–220 (2007).

\bibitem[Miy91]{Miy}
J. Miyachi,
\emph{Localization of triangulated categories and derived categories},
J Algebra. 141(2), 463-483 (1991). 

\bibitem[Mora]{Mora}
H. Morimura,
\emph{Categorical generic fiber},
arXiv:2111.00239v3

\bibitem[Morb]{Morb}
H. Morimura,
\emph{Reduced Tate--Shafarevich group},
arXiv:2205.11268v2 

\bibitem[Morc]{Morc}
H. Morimura,
\emph{Totalo--Vial lemma in $F$-theory},
arXiv:2301.12751v4

\bibitem[Mord]{Mord}
H. Morimura,
\emph{Versal dg deformation of Calabi--Yau manifolds},
arXiv:2111.11778v3
 
\bibitem[Ola]{Ola}
N. Olander,
\emph{Orlov's theorem in the smooth proper case},
arXiv:2006.15173.

\bibitem[Orl97]{Orl}
D. Orlov,
\emph{Equivalences of derived categories and K3 surfaces},
J. Math. Sci. 84, 1361-1381 (1997).

\bibitem[Per]{Per}
A. Perego,
\emph{A Gabriel Theorem for coherent twisted sheaves},
arXiv:math/0607025v2

\bibitem[Per08]{Per08}
A. Perego,
\emph{A Gabriel Theorem for coherent twisted sheaves},
Math. Z. 262(3), 571-583 (2009).

\bibitem[SP]{SP}
The Stacks Project authors,
\emph{Stacks project},
https://stacks.math.columbia.edu.
\end{thebibliography}
\end{document}